\numberwithin{equation}{section}
\newtheorem{theorem}{Theorem}[section]
\newtheorem{corollary}[theorem]{Corollary}
\newtheorem{lemma}[theorem]{Lemma}
\newtheorem{definition}[theorem]{Definition}
\newtheorem{remark}[theorem]{Remark}
\newcommand{\IR}{{\mathbb{R}}}
\newcommand{\IN}{{\mathbb{N}}}
\renewcommand{\phi}{\varphi}
\renewcommand{\epsilon}{\varepsilon}
\newcommand{\eps}{\varepsilon}
\newcommand{\ltn}{\ensuremath{\left| \! \left| \! \left|}}
\newcommand{\rtn}{\ensuremath{\right| \! \right| \! \right|}}
\newcommand{\aM}{\mathcal{M}}
\newcommand{\aP}{\mathcal{P}}
\newcommand{\aI}{\mathcal{I}}
\newcommand{\aL}{\mathcal{L}}
\newcommand{\oN}{\overline{N}}
\newcommand{\ov}{\overline{v}}
\newcommand{\omegaa}{\,\omega^{(2)}}
\newcommand{\omegaan}{\,\omega^{(2),n}}
\newcommand{\yt}{\widetilde{y}}
\newcommand{\zt}{\widetilde{z}}
\newcommand{\qed}{\hfill $\Box$\smallskip}
\title{Global solutions and random dynamical systems for rough evolution equations}
\author{Robert Hesse\thanks{Institute for Mathematics, Friedrich Schiller University Jena (FSU),  Ernst-Abbe-Platz 2, 07743 Jena, Germany. E-Mail: robert.hesse@uni-jena.de.}~~and~Alexandra 
	Neam\c tu\thanks{Technical University of Munich (TUM), 
		Faculty of Mathematics, 85748 Garching bei M\"unchen, Germany. E-Mail: alexandra.neamtu@tum.de.\newline
		\hspace*{3 mm} The authors are grateful to M.~J.~Garrido-Atienza and B.~Schmalfu\ss{} for helpful comments. AN acknowledges support by a DFG grant in the
		D-A-CH framework (KU 3333/2-1).
}}
\begin{document}
\maketitle
\begin{abstract}
	We consider infinite-dimensional parabolic rough evolution equations. Using regularizing properties of analytic semigroups we prove global-in-time existence of solutions and investigate random dynamical systems for such equations.
\end{abstract}
\textbf{Keywords}: stochastic evolution equations, random dynamical systems, rough paths theory, fractional Brownian motion.\\

\textbf{MSC}: 60H15, 60H05, 60G22, 37L55.
\section{Introduction}
In this work we analyze global-in-time existence of solutions for rough stochastic partial differential equations (SPDEs)
\begin{align}\label{eq1}
\begin{cases}
d y_{t} = (A  y_{t} + F (y_{t}) )dt + G (y_{t}) d \omega_{t},~~ t\in[0,T] \\
y(0)=\xi.
\end{cases}
\end{align}
Here $T>0$ is a fixed time-horizon, the linear part $A$ is the generator of an analytic $C_{0}$-semigroup $(S(t))_{t\in[0,T]}$ on a separable Banach space $W$ and the initial condition $\xi\in W$. Furthermore we assume that the nonlinearity $F:W\to W$ is Lipschitz and $G:W\to \mathcal{L}(V,W)$ is three times continuously Fr\'echet-differentiable with bounded derivatives. The precise assumptions on the coefficients will be stated in Section~\ref{preliminaries}. Finally, the random input $\omega$ is a Gaussian process which can be lifted to a geometric rough path, for instance a fractional Brownian motion with Hurst parameter $H\in(1/3,1/2]$. We refer to~\cite{GarridoLuSchmalfuss1,HesseNeamtu} for examples of such SPDEs. In order to solve~\eqref{eq1} we rely on the pathwise construction of the rough integral
\begin{align}\label{integral}
\int\limits_{0}^{t} S(t-r)G(y_{r}) d\omega_{r}
\end{align}
developed in~\cite{HesseNeamtu}.
Similar results in this context are available in~\cite{Gubinelli,GubinelliLejayTindel,GubinelliTindel} using rough paths techniques and~\cite{GarridoLuSchmalfuss2} using fractional calculus and more recently in~\cite{HesseNeamtu} using an ansatz which combines these two approaches in a suitable way. As already announced in~\cite{HesseNeamtu} the ultimate goal is to investigate the long-time behavior of~\eqref{eq1} and therefore this work establishes the existence of a pathwise global solution. Consequently, we can show that the solution operator of~\eqref{eq1} generates an infinite-dimensional random dynamical system. \\

Refering to the monograph of Arnold~\cite{Arnold}, it is well-known that an It\^{o}-type stochastic differential equation generates a random dynamical system under natural assumptions on the coefficients.~This fact is based on the flow property, see~\cite{Kunita, Scheutzow}, which can be obtained by Kolmogorov's theorem about the existence of a (H\"older)-continuous random field with finite-dimensional parameter range, i.e.~the parameters of this random field are the time and the non-random initial data.\\

The generation of a random dynamical system from an It\^{o}-type SPDE has been a long-standing open problem, since Kolmogorov's theorem breaks down for random fields parametrized by infinite-dimensional Hilbert spaces,~see~\cite{Mohammed}.~As a consequence it is not trivial how to obtain a random dynamical system from an SPDE, since its solution is defined \emph{almost surely}, which contradicts the \emph{cocycle property}. Particularly, this means that there are exceptional sets which depend on the initial condition and it is not clear how to define a random dynamical system if more than countably many exceptional sets occur.
This problem was fully solved only under very restrictive assumptions on the structure of the noise driving the equation.~For instance if one deals with purely additive noise or multiplicative Stratonovich one, there are standard transformations which reduce the SPDE in a random partial differential equation.~Since this can be solved pathwise it is straightforward to obtain a random dynamical system.~However, for nonlinear multiplicative noise, this technique is no longer applicable, not even if the random input is a Brownian motion.~As a consequence of this issue, dynamical aspects for~\eqref{eq1} such as asymptotic stability, Lyapunov exponents, multiplicative ergodic theorems, random attractors, random invariant manifolds have not been investigated in their full generality.\\

Consequently, a pathwise construction of~\eqref{integral} and implicitly of the solution of~\eqref{eq1} would be the first step to overcome this obstacle. Recently, there has been a growing interest to give a pathwise meaning to the solutions of SPDEs by various techniques, see e.g.~\cite{GubinelliTindel,GarridoLuSchmalfuss2,Hairer}. However there are very few results that explore the pathwise character of the solutions to analyze random dynamical systems and their long-time behavior. Progress in this sense was made for instance in~\cite{GLS, GarridoLuSchmalfuss2} that deal with random dynamical systems for SPDEs driven by a fractional Brownian motion with Hurst parameter $H\in(1/2,1)$ and $H\in(1/3,1/2]$.~Local stability statements can be looked up in~\cite{GS}. Moreover,~\cite{Gao} and~\cite{GarridoLuSchmalfussUnstable} prove random attractors respectively random unstable manifolds for SPDEs driven by a fractional Brownian motion where the range of the Hurst index is $(1/2,1)$. 
All these techniques rely on fractional calculus and require strong assumptions on the coefficients of the SPDEs.\\

To our best knowledge there are very few works that connect the \emph{rough paths}- and \emph{random dynamical systems} perspectives such as~\cite{Gess}.~Here we contribute to this aspect and provide a general framework of random dynamical systems for rough evolution equations under natural/less restrictive assumptions on the coefficients.~The crucial result that opens the door for the random dynamical systems theory is the existence of a global pathwise solution for~\eqref{eq1}. It is known that global-in-time existence of solutions is a challenging question in the context of rough paths techniques, compare~\cite{GarridoLuSchmalfuss2, GubinelliTindel, HesseNeamtu}. This is due to the fact that one obtains certain quadratic estimates on the norms of the solution of~\eqref{eq1}. Hence it is not straightforward if one can extend the local solution on an arbitrary time horizon. Using additional restrictions on the coefficients and of the noisy input~\cite{GarridoLuSchmalfuss2} shows global-in-time existence for~\eqref{eq1} driven by a fractional Brownian motion with Hurst index $H\in(1/3,1/2]$. However, in this work using regularizing properties of analytic $C_{0}$-semigroups, a-priori estimates on certain remainder terms and a standard concatenation procedure we are able to prove using rough paths techniques the global-in-time~existence of solutions. Therefore we succeed in closing the gap in~\cite{HesseNeamtu}.\\

This work is structured as follows. Section~\ref{preliminaries} collects important auxiliary results concerning parabolic evolution equations and rough paths theory. In Section~\ref{sect:sewing} we present a very general Sewing Lemma (Theorem~\ref{lemma_sewing}), which entails the construction of the rough integral~\eqref{integral}. Under suitable assumptions, we are able to derive additional space-regularity of the integral operator, compare~\cite{HesseNeamtu,Gubinelli}, which will turn out to be crucial for the global-in-time existence. 
For the convenience of the reader Section~\ref{local:solution} summarizes basic results regarding the construction of local-in-time solutions for rough evolution equations.~We point out that in the context of rough paths the solution is given by a pair containing the path together with its Gubinelli derivative. These two components satisfy suitable algebraic and analytical properties which are precisely summarized and discussed within Section~\ref{local:solution}.
 These are the main necessary ingredients required in order to comprehend the techniques employed in Section~\ref{sect:global:solution}, where we establish the central result of this paper.~This opens the door to infinite-dimensional random dynamical systems using a rough path approach. Here we only show the existence of a random dynamical system in Section~\ref{sect:rds} and aim to investigate its long-time behavior in future works. 
\section{Preliminaries}\label{preliminaries}
We let $T>0$, $V$ stand for a Hilbert space and $W$ denote a separable Banach space. Furthermore, for any compact interval $J \subset \IR$ we set  
$\Delta_{J} := \left\{\left(t,s \right) \in J^2 \colon t \geq s \right\}$ and 
 $\Delta_{T} := \Delta_{[0,T]}$.~For notational simplicity, if not further stated, we write  $\left| \cdot \right|$ for the norm of an arbitrary Banach space.  Furthermore $C$ denotes a universal constant which varies from line to line. The explicit dependence of $C$ on certain parameters will be precisely stated, whenever required. Finally, we fix $\alpha\in(\frac{1}{3},\frac{1}{2})$. This parameter indicates the H\"older-regularity of the random input. Regarding this we recall the following essential concept in the rough path theory.
\begin{definition}\label{hrp}\emph{($\alpha$-H\"older rough path)}
	Let $J \subset \IR$ be  a compact interval. We call a pair $\bm{\omega}:=(\omega,\omegaa)$ $\alpha$-H\"older rough path if $\omega\in C^{\alpha}(J, V)$ and $\omegaa\in C^{2\alpha}(\Delta_{J}, V\otimes V)$. Furthermore $\omega$ and $\omegaa$ are connected via Chen's relation, meaning that
	\begin{align}\label{chen}
	\omegaa_{ts} - \omegaa_{us} - \omegaa_{tu} = (\omega_{u} - \omega_{s})\otimes ( \omega _{t}- \omega_{u}) 
	,~~ \mbox{for } s,u,t \in J,~~ s \leq u \leq t .
	\end{align}
	In the literature $\omegaa$ is referred to as L\'evy-area or second order process.	
\end{definition}
We further describe an appropriate \emph{distance} between two $\alpha$-H\"older rough paths.
\begin{definition}
	Let $\bm{\omega}$ and $\bm{\widetilde{\omega}}$ be two $\alpha$-H\"older rough paths. We introduce the $\alpha$-H\"older rough path (inhomogeneous) metric
	\begin{align}\label{rp:metric}
	d_{\alpha,J}(\bm{\omega},\widetilde{\bm{\omega}} )
	:= \sup\limits_{(t,s)\in \Delta_J} \frac{|\omega_{t}-\omega_{s}-\widetilde{\omega}_{t}+\widetilde{\omega}_{s}|}{|t-s|^{\alpha}} 
	+ \sup\limits_{(t,s) \in \Delta_{J}}
	\frac{|\omegaa_{ts}-\widetilde{\omega}^{(2)}_{ts}|} {|t-s|^{2\alpha}}.
	\end{align}
	We set $d_{\alpha,T}:=d_{\alpha,[0,T]}$.
\end{definition}
For more details on this topic consult~\cite[Chapter 2]{FritzHairer}. We stress that in our situation we always have that $\omega(0)=0$ and therefore~\eqref{rp:metric} is a metric. We specify concrete examples in Section~\ref{sect:rds}.\\

Having stated the random influences that we consider, we now introduce the assumptions on the linear part and on the coefficients $F$ and $G$. \\

Since we are in the parabolic setting, i.e.~$A$ is a sectorial operator, we can introduce its fractional powers, $(-A)^{\gamma}$ for $\gamma\geq 0$, see \cite[Section 2.6]{Pazy} or \cite{Lunardi}. We denote the domains of the fractional powers of $(-A)$ with $D_{\gamma}$, i.e. $D_{\gamma}:=D((-A)^{\gamma})$ and use following estimates.\\

For $\eta, \kappa\in\mathbb{R}$ we have
\begin{align}
||S(t) ||_{\mathcal{L}(D_{\kappa}, D_{\eta})} =||(-A)^{\eta}S(t)||_{\mathcal{L}(D_{\kappa},W)} \leq C t ^{k-\eta}, &~~\mbox{for  } \eta\geq \kappa \label{hg1}\\
||S(t) - \mbox{Id} ||_{\mathcal{L}(D_{\sigma}, D_{\lambda})} \leq C t^{\sigma-\lambda}, &~~\mbox{for  } \sigma-\lambda\in[0,1]\label{hg2}.
\end{align}
Furthermore, one can show that the following assertions hold true, consult~\cite[Chapter~3]{Pazy}. 
\begin{lemma}
	For any $\nu,\eta,\mu\in[0,1]$, $\kappa,\gamma,\rho\geq 0$ such that $\kappa\leq \gamma+\mu$, there exists a constant $C>0$ such that for $0<q<r<s<t$ we have that
	\begin{align*}
	&||S(t-r) - S(t-q)||_{\mathcal{L}(D_{\kappa},D_{\gamma})} \leq C (r-q)^{\mu}(t-r)^{-\mu-\gamma+\kappa},\\
	&||S(t-r)- S(s-r)- S(t-q) + S(s-q)||_{\mathcal{L}(D_{\rho}, D_{\rho})} \leq C (t-s)^{\eta}(r-q)^{\nu}(s-r)^{-(\nu+\eta)}.
	\end{align*}	
\end{lemma}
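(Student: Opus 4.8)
The plan is to deduce both inequalities from the elementary bounds~\eqref{hg1} and~\eqref{hg2} by exploiting the semigroup law $S(a+b)=S(a)S(b)$: I would rewrite each (iterated) difference of semigroup operators as a composition of factors of the form $S(h)-\mbox{Id}$, which generate the H\"older-type gain in the time increments, with bare semigroup operators $S(h)$, which supply the parabolic smoothing. The hypothesis $\kappa\le\gamma+\mu$ should be used exactly at the point where one needs enough smoothing to compensate the $\mu$ derivatives lost to a factor $S(r-q)-\mbox{Id}$.

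For the first estimate I would start from the identity $S(t-q)-S(t-r)=\big(S(r-q)-\mbox{Id}\big)S(t-r)$ (valid since $q<r$) and factor the operator norm through the intermediate space $D_{\gamma+\mu}$, so that
\begin{align*}
\|S(t-r)-S(t-q)\|_{\mathcal{L}(D_{\kappa},D_{\gamma})}
\le \|S(r-q)-\mbox{Id}\|_{\mathcal{L}(D_{\gamma+\mu},D_{\gamma})}\,\|S(t-r)\|_{\mathcal{L}(D_{\kappa},D_{\gamma+\mu})}.
\end{align*}
Then~\eqref{hg2} (with $\sigma=\gamma+\mu$, $\lambda=\gamma$, so $\sigma-\lambda=\mu\in[0,1]$) bounds the first factor by $C(r-q)^{\mu}$, and~\eqref{hg1} (with source index $\kappa$ and target index $\gamma+\mu\ge\kappa$ --- this is where $\kappa\le\gamma+\mu$ enters) bounds the second by $C(t-r)^{\kappa-\gamma-\mu}$; multiplying gives the claim.

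For the second estimate I would regroup the four operators and use $S(t-r)=S(t-s)S(s-r)$, $S(t-q)=S(t-s)S(s-q)$ and $S(s-q)=S(r-q)S(s-r)$ --- all licit since $q<r<s<t$ --- to arrive at
\begin{align*}
S(t-r)-S(s-r)-S(t-q)+S(s-q)
&=\big(S(t-s)-\mbox{Id}\big)\big(S(s-r)-S(s-q)\big)\\
&=-\big(S(t-s)-\mbox{Id}\big)\big(S(r-q)-\mbox{Id}\big)S(s-r),
\end{align*}
and then estimate the three factors in $\mathcal{L}(D_{\rho},D_{\rho})$ through the intermediate spaces $D_{\rho+\eta}$ and $D_{\rho+\nu+\eta}$: by~\eqref{hg1}, $\|S(s-r)\|_{\mathcal{L}(D_{\rho},D_{\rho+\nu+\eta})}\le C(s-r)^{-(\nu+\eta)}$; by~\eqref{hg2} (with $\sigma-\lambda=\nu\in[0,1]$), $\|S(r-q)-\mbox{Id}\|_{\mathcal{L}(D_{\rho+\nu+\eta},D_{\rho+\eta})}\le C(r-q)^{\nu}$; and again by~\eqref{hg2} (with $\sigma-\lambda=\eta\in[0,1]$), $\|S(t-s)-\mbox{Id}\|_{\mathcal{L}(D_{\rho+\eta},D_{\rho})}\le C(t-s)^{\eta}$. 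The product of the three bounds is exactly the asserted one.

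I do not expect any genuine analytic obstacle; the only delicate point is the bookkeeping of intermediate spaces, which must be arranged so that every use of~\eqref{hg2} has exponent $\sigma-\lambda\in[0,1]$ and every use of~\eqref{hg1} has target index no smaller than the source index. As an alternative one could instead start from the integral representations $S(t-r)-S(t-q)=\int_{t-r}^{t-q}(-A)S(\tau)\,d\tau$ (and the analogous double integral for the iterated difference) and bound the integrands by~\eqref{hg1}; this recovers the same exponents but needs more care near the endpoints, so the factorized argument above is the cleaner route. The final constant $C$ depends only on the fixed parameters $\nu,\eta,\mu,\kappa,\gamma,\rho$, on $A$, and on $T$.
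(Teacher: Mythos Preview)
Your argument is correct. The paper does not supply its own proof of this lemma but simply refers the reader to \cite[Chapter~3]{Pazy}; your factorization via the semigroup law together with the elementary bounds~\eqref{hg1}--\eqref{hg2} is precisely the standard derivation one finds in such references, and the bookkeeping of intermediate spaces is carried out correctly.
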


For our aims we introduce following function spaces. 
Let $\beta\in(0,1)$ be fixed and let $\overline{W}$ stand for a further Hilbert space. We recall that $C^{\beta}([0,T],W)$ represents the space of $W$-valued H\"older continuous functions on $[0,T]$ and denote by $C^{\alpha}(\Delta_T,\overline{W})$ the space of $\overline{W}$-valued functions on $\Delta_T$ with $z_{t,t}=0$ for all $t\in[0,T]$ and
\begin{align*}
\left\| z \right\|_{\alpha}:= \sup\limits_{0\leq t \leq T} |z_{t0}| + \sup\limits_{0\leq s < t \leq T} \frac{\left|z_{ts}\right|}{(t-s)^{\alpha}}<\infty.
\end{align*}

Furthermore, we define $C^{\beta,\beta}([0,T],W)$ as the space of $W$-valued continuous functions on $[0,T]$ endowed with the norm

\begin{align*}
\left\|y\right\|_{\beta,\beta}
:= \left\|y\right\|_ \infty + \ltn y\rtn_{\beta,\beta}
:= \sup\limits_{0\leq t \leq T } |y_t| + \sup\limits_{0<s<t\leq T }s^{\beta}\frac{|y_t-y_s|}{(t-s)^{\beta}}.
\end{align*}
Similarly we introduce $C^{\alpha+\beta,\beta}(\Delta_T, \overline{W})$ with the norm
\begin{align*}
\left\| z \right\|_{\alpha+\beta,\beta}:=\sup\limits_{0\leq t \leq T } |z_{t0}| + \sup\limits_{0< s < t \leq T} s^{\beta}\frac{\left|z_{ts}\right|}{(t-s)^{\alpha+\beta}}.
\end{align*}
Again $z_{t,t}=0$ for all $t\in [0,T]$.\\

These modified H\"older spaces are well-known in the theory of maximal regularity for parabolic evolution equations, consult~\cite{Lunardi}. These were also used in~\cite{GarridoLuSchmalfuss2}.\\

In this framework we emphasize the following result which will be employed throughout this work. It is well-known that analytic $C_{0}$-semigroups are not H\"older continuous in $0$. However, the following lemma holds true.

\begin{lemma}\label{betabeta}
	Let $\left(S(t)\right)_{t\geq 0}$ be an analytic $C_{0}$-semigroup on $W$. Then we have for all $x \in W$ and all $\beta \in \left[0,1 \right]$ that
	\begin{align*}
	\left\|  S(\cdot) x\right\|_{\beta,\beta}\leq C \left|x\right|,
	\end{align*}
	where $C$ depends only on the semigroup and on $\beta$.
\end{lemma}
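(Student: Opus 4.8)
The plan is to estimate the two pieces of the $\|\cdot\|_{\beta,\beta}$-norm separately. The sup-norm piece is immediate: since $(S(t))_{t\geq 0}$ is a $C_0$-semigroup, it is uniformly bounded on the compact interval $[0,T]$, so $\sup_{0\le t\le T}|S(t)x|\le M|x|$ for some $M=M(S,T)$. The real content is the weighted Hölder seminorm $\ltn S(\cdot)x\rtn_{\beta,\beta}=\sup_{0<s<t\le T}s^{\beta}\,|S(t)x-S(s)x|/(t-s)^{\beta}$. Here I would write $S(t)x-S(s)x=(S(t-s)-\mathrm{Id})S(s)x$ and then use the smoothing of the analytic semigroup: for $\beta\in[0,1]$ one has, by \eqref{hg2} (with $\sigma-\lambda=\beta$, interpreting $S(\cdot)$ as acting from $D_{\beta}$ into $W=D_0$), the bound $\|S(t-s)-\mathrm{Id}\|_{\mathcal{L}(D_{\beta},W)}\le C(t-s)^{\beta}$, while by \eqref{hg1} (with $\kappa=0$, $\eta=\beta$) one has $\|S(s)\|_{\mathcal{L}(W,D_{\beta})}=\|(-A)^{\beta}S(s)\|_{\mathcal{L}(W,W)}\le C s^{-\beta}$. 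Composing these two estimates gives
\begin{align*}
|S(t)x-S(s)x|\le \|S(t-s)-\mathrm{Id}\|_{\mathcal{L}(D_{\beta},W)}\,\|S(s)x\|_{D_\beta}\le C(t-s)^{\beta}s^{-\beta}|x|,
\end{align*}
and multiplying by $s^{\beta}/(t-s)^{\beta}$ yields the uniform bound $s^{\beta}|S(t)x-S(s)x|/(t-s)^{\beta}\le C|x|$, as required. Adding the two contributions gives $\|S(\cdot)x\|_{\beta,\beta}\le C|x|$ with $C$ depending only on the semigroup (through the constants in \eqref{hg1}, \eqref{hg2}, and the uniform bound $M$) and on $\beta$.

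The boundary cases should be checked but are trivial: for $\beta=0$ the weighted seminorm is just the ordinary oscillation, controlled by $2M|x|$, and for $\beta=1$ the argument above still applies since \eqref{hg2} covers $\sigma-\lambda\in[0,1]$ and \eqref{hg1} allows $\eta=1$. I expect the only subtlety — and hence the one point deserving care — is the bookkeeping that $\|S(t-s)-\mathrm{Id}\|_{\mathcal{L}(D_\beta,W)}\le C(t-s)^{\beta}$ is indeed the correct reading of \eqref{hg2} and is valid \emph{uniformly} for $t-s$ ranging over all of $[0,T]$ (not merely for small $t-s$); this follows because on $[0,T]$ one also has the crude bound $\|S(t-s)-\mathrm{Id}\|_{\mathcal{L}(W,W)}\le 1+M$, and $(t-s)^{\beta}$ is bounded below away from $0$ once $t-s$ is bounded below, so the two regimes patch together with a single constant. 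No genuine obstacle arises; the lemma is essentially a direct consequence of the smoothing estimates \eqref{hg1}–\eqref{hg2} already recorded above.
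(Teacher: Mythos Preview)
Your proof is correct and follows essentially the same approach as the paper: split the norm into the sup part and the weighted H\"older part, rewrite $S(t)x-S(s)x=(S(t-s)-\mathrm{Id})S(s)x$, and apply the smoothing estimates~\eqref{hg1} and~\eqref{hg2} to produce the factor $(t-s)^{\beta}s^{-\beta}$. The paper's proof is in fact terser than yours, omitting the discussion of boundary cases and the uniformity of~\eqref{hg2} on $[0,T]$.
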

\begin{proof}
	\begin{align*}
	\left\| S(\cdot) x\right\|_{\beta,\beta} 
	&= \sup\limits_{0 \leq t \leq T} \left|S(t) x\right| + \sup\limits_{0<s < t \leq T} s^\beta \frac{\left|(S(t)-S(s)) x\right| }{(t-s)^\beta}\\
	& \leq \sup\limits_{0 \leq t \leq T} \left|S(t) x\right| + \sup\limits_{0<s<t\leq T} s^{\beta} \frac{|(S(t-s)-\mbox{Id})S(s)x|}{(t-s)^{\beta}} \\
	&\leq C |x|,
	\end{align*}
	recall (\ref{hg1})
	and (\ref{hg2}).
	\qed \end{proof}\\

This justifies our choice of working with the function space $C^{\beta,\beta}$. Note that if one lets $x\in D_{\beta}$ it suffices to consider only $C^{\beta}$. However, since we analyze random dynamical systems generated by~\eqref{eq1} in $W$~(compare Section~\ref{sect:rds}), we need to take the initial condition $\xi\in W$ instead of $D_{\beta}$.
\newpage
On the coefficients we impose: 

\begin{description}
	\item[(F)] $F \colon W \to W$ is  Lipschitz continuous.
	\item[(G)] $G \colon W \to \aL(V,D_\beta)$ is bounded and three times Frech\'et differentiable with bounded derivatives. Here we demand $\alpha + 2\beta >1$.
\end{description}

\begin{remark}
	\begin{itemize}
		\item [1)] As in~\cite{HesseNeamtu} we set $F\equiv 0$ for simplicity, since this term does not cause additional technical difficulties.
		\item [2)] To our best knowledge $(\textbf{F})$ and $(\textbf{G})$ are the most general assumptions made on the coefficients of the SPDE~\eqref{eq1}, compare~\cite{GarridoLuSchmalfuss2, GubinelliTindel} and the references specified therein.
 
	\end{itemize}
\end{remark}
Finally, we fix some important notations from the rough paths theory, see also~\cite{GubinelliTindel,DeyaGubinelliTindel} and random dynamical systems which will be required later on.\\

\textbf{Notations}: For $y \in C([0,T],W)$ and $z \in C(\Delta_T,\overline{W})$ we set
\begin{align*}
(\delta y)_{ts}&:= y_t-y_s, \\
(\hat\delta y)_{ts}&:= y_t-S(t-s)y_s, \\
(\delta_2 z)_{t \tau s}&:= z_{ts} - z_{t\tau} - z_{\tau s}, \\
(\hat\delta_2 z)_{t \tau s}&:= z_{ts} - z_{t\tau} - S(t-\tau)z_{\tau s}.
\end{align*}
 
  Furthermore we use the notation $\widetilde{\theta}$ in order to indicate the usual shift, namely 
 \begin{align*}
 \widetilde{\theta}_{\tau}y_t &:= y_{t+\tau}, \\
 \widetilde{\theta}_{\tau}z_{ts}&:= z_{t+\tau,s+\tau}.	
 \end{align*}
 The notation $\theta$ always stands for the Wiener shift (this represents an appropriate shift with respect to the noise), more precisely
 \begin{align*}
 \theta_{\tau}\omega_{t}:=\omega_{t+\tau} -\omega_{\tau},
 \end{align*}
 which is explained in detail in Section~\ref{rds}. This is mainly required in the random dynamical systems theory.
 
\section{Sewing Lemma revised}\label{sect:sewing}
In this section we collect concepts from the rough paths theory~\cite{ GubinelliLejayTindel,GubinelliTindel} and recall some important results regarding the construction and properties of~\eqref{integral}. For further details and complete proofs of the following statements, consult~\cite[Section 4]{HesseNeamtu}. A key point in this framework is given by the Sewing Lemma~\cite{GubinelliTindel}. This ensures the existence of a rough integral under suitable assumptions.
Here we use a special case of the Sewing Lemma proved in~\cite{HesseNeamtu}. Based on this we develop a more general statement which is crucial for Section~\ref{sect:global:solution}. 

\begin{theorem}[Sewing Lemma, Theorem~4.1~\cite{HesseNeamtu}]\label{lemma_sewing}
	Let $W$ be a separable Banach space and $(S(t))_{t\geq 0}$ be an analytic $C_{0}$-semigroup on $W$.
	Furthermore, let $\Xi \in C \! \left( \Delta_{T} , W\right)$ be an approximation term satisfying the following properties
	for all $0 \leq u \leq m \leq v \leq T$ :
	\begin{align}
	\left|\Xi_{vu}\right| &\leq c_1 \left(v-u \right)^{\alpha},\label{assumption_Xi}\\
	\left|\left(\hat\delta_2 \Xi\right)_{vmu} \right| & \leq c_2 \left(v-u \right)^{\rho}.\label{assumption_deltaXi1}
	\end{align}
	Here we impose $0 \leq\alpha\leq 1$ and $\rho>1$.\\			
	
	Then there exists a unique $\aI\Xi \in C\!\left(\left[0,T\right],W \right)$, such that 
	\begin{align}
	\aI\Xi_0 &=0, \\
	\left|\left(\hat\delta \aI \Xi \right)_{ts} \right|&\leq C \left(c_1+c_2 \right) \left(t-s \right)^{\alpha} 
	\label{property_deltaIXiest}\\
	\left|(\hat\delta \aI \Xi)_{ts}-\Xi_{ts} \right|&\leq C c_2 \left(t-s \right)^{\rho}.
	\label{property_IXiest1}
	\end{align}
\end{theorem}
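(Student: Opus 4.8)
The plan is to follow the classical sewing construction adapted to the "twisted" increments $\hat\delta$ and $\hat\delta_2$ built from the semigroup $S$. First I would fix $(t,s)\in\Delta_T$ and, for a dyadic partition of $[s,t]$ with mesh parameter $n$, define the Riemann-type sums
\[
\aI\Xi^{n}_{ts} := \sum_{i} S(t-u_{i+1})\,\Xi_{u_{i+1}u_i},
\]
where $s=u_0<u_1<\dots<u_{2^n}=t$ are the dyadic points. The first step is to compare consecutive levels $\aI\Xi^{n}_{ts}$ and $\aI\Xi^{n+1}_{ts}$: passing from level $n$ to $n+1$ inserts the midpoint $m_i$ of each subinterval, and the difference of the two sums is exactly a sum of terms of the form $S(t-u_{i+1})(\hat\delta_2\Xi)_{u_{i+1}m_iu_i}$. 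Using \eqref{assumption_deltaXi1}, the boundedness $\|S(t-u_{i+1})\|_{\mathcal L(W)}\le C$ (from \eqref{hg1} with $\eta=\kappa=0$), and the fact that $\sum_i (u_{i+1}-u_i)^\rho \le (t-s)^\rho 2^{-n(\rho-1)}$ since $\rho>1$, I get a geometrically summable bound
\[
|\aI\Xi^{n+1}_{ts}-\aI\Xi^{n}_{ts}| \le C c_2 (t-s)^\rho 2^{-n(\rho-1)}.
\]
Hence $(\aI\Xi^n_{ts})_n$ is Cauchy in $W$; call the limit $J_{ts}$. Summing the telescoping series from $n=0$ (where $\aI\Xi^0_{ts}=\Xi_{ts}$) gives $|J_{ts}-\Xi_{ts}|\le C c_2 (t-s)^\rho$, which is the seed of \eqref{property_IXiest1}; combining with \eqref{assumption_Xi} gives $|J_{ts}|\le C(c_1+c_2)(t-s)^\alpha$.

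The second step is to show that $J$ is, up to the semigroup twist, an exact increment, i.e. that there is $\aI\Xi\in C([0,T],W)$ with $\aI\Xi_0=0$ and $(\hat\delta\aI\Xi)_{ts}=J_{ts}$. For this I would verify the twisted-cocycle identity $J_{ts} = S(t-\tau)J_{\tau s} + J_{t\tau}$ for $s\le\tau\le t$. This follows by taking a common refinement of the dyadic partitions of $[s,\tau]$ and $[\tau,t]$, using the semigroup property $S(t-u_{i+1})=S(t-\tau)S(\tau-u_{i+1})$ on the left block, and passing to the limit; the limit is independent of the partition sequence by the Cauchy estimate above applied to mixed refinements. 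Once $\hat\delta_2 J=0$ holds, one sets $\aI\Xi_t := J_{t0}$ and checks $(\hat\delta\aI\Xi)_{ts} = J_{t0} - S(t-s)J_{s0} = J_{ts}$ using the identity with $\tau=s$... wait, more precisely with the cocycle relation in the form $J_{t0}=S(t-s)J_{s0}+J_{ts}$. Continuity of $t\mapsto\aI\Xi_t$ follows from the Hölder bound \eqref{property_deltaIXiest} together with strong continuity of $S$ at $0$.

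The third step is uniqueness. Suppose $Y\in C([0,T],W)$ with $Y_0=0$ and $|(\hat\delta Y)_{ts}-\Xi_{ts}|\le K(t-s)^\rho$ for some constant $K$. Then $R_{ts}:=(\hat\delta Y)_{ts}-J_{ts}$ satisfies $|R_{ts}|\le (K+Cc_2)(t-s)^\rho$ and, because both $\hat\delta Y$ and $J$ satisfy the twisted-cocycle identity $\hat\delta_2(\cdot)=0$, so does $R$: $R_{ts}=S(t-\tau)R_{\tau s}+R_{t\tau}$. Iterating this over a uniform partition of $[s,t]$ with $N$ points and using $\|S\|_{\mathcal L(W)}\le C$ gives $|R_{ts}|\le C N (\,(t-s)/N\,)^\rho \to 0$ as $N\to\infty$ since $\rho>1$; hence $R\equiv 0$, so $\hat\delta Y=\hat\delta\aI\Xi$ and therefore $Y_t=Y_0+ \cdots = \aI\Xi_t$.

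The main obstacle I anticipate is bookkeeping around the semigroup twist: unlike the plain Young/rough sewing lemma, here the "connection" $\hat\delta_2$ mixes in $S(t-\tau)$, so one must be careful that (i) the level-$n$ to level-$(n+1)$ comparison really produces $\hat\delta_2\Xi$ terms weighted by a \emph{bounded} operator $S(t-u_{i+1})$ rather than an unbounded fractional power (this is why assumptions \eqref{assumption_Xi}--\eqref{assumption_deltaXi1} are stated with the $W$-norm and no smoothing gain), and (ii) the twisted-cocycle identity for the limit is stable under refinement. Everything else is the standard dyadic Cauchy-sequence argument, and the constant $C$ in \eqref{property_deltaIXiest}--\eqref{property_IXiest1} depends only on $\rho$, $\alpha$, and $\sup_{0\le t\le T}\|S(t)\|_{\mathcal L(W)}$.
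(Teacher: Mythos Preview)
The paper does not prove this theorem; it is imported from \cite{HesseNeamtu} and merely stated. Your dyadic telescoping construction is exactly the standard argument for the semigroup-twisted sewing lemma, and it coincides with the method one sees recycled in the paper's own proof of Corollary~\ref{corollary_sewing_epsestimate_adapted} (dyadic partitions $\aP_n$, sums $N^n_{ts}=\sum S(t-v)\Xi_{vu}$, differences controlled by $(\hat\delta_2\Xi)$), so there is nothing materially different to compare.

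One small point worth tightening: your verification of the twisted-cocycle identity $J_{ts}=S(t-\tau)J_{\tau s}+J_{t\tau}$ for an \emph{arbitrary} intermediate $\tau$ is a bit breezy. The dyadic construction only gives this for dyadic $\tau$ directly; the clean route is to first upgrade the convergence to arbitrary partitions with mesh $\to 0$ (this is precisely Corollary~\ref{corollary_sewing_integral}, also cited from \cite{HesseNeamtu}), after which the cocycle identity is immediate by splitting any partition of $[s,t]$ at $\tau$. Your ``common refinement'' remark is pointing in the right direction but would need that independence-of-partition statement made explicit to be complete. Everything else --- the Cauchy estimate, the definition $\aI\Xi_t:=J_{t0}$, and the uniqueness via iterating $\hat\delta_2 R=0$ over a fine uniform partition --- is correct as written.
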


In order to interpret $\aI\Xi$ as a rough integral it is crucial that this fulfills integral-like properties, namely it has to be given by a limit of finite sums and satisfy a shift property. These have been rigorously verified in~\cite[Section~4]{HesseNeamtu}.
\begin{corollary}[Approximation by finite sums, Corollary~4.3~\cite{HesseNeamtu}] 
	\label{corollary_sewing_integral}
	Under the assumptions of Theorem~\ref{lemma_sewing} it holds that
	\begin{align}
	\left(\hat\delta \aI \Xi \right)_{ts} 
	&= \lim\limits_{\left|\aP\right| \to 0} \sum\limits_{\left[u,v\right] \in \aP} {S(t-v) \Xi_{vu}},
	\label{property_IXieq}
	\end{align}
	where $\left|\aP\right|$ stands for the mesh of the given partition $\aP=\aP(s,t)$.
\end{corollary}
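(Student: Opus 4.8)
The plan is to combine the estimate \eqref{property_IXiest1} delivered by the Sewing Lemma with the semigroup property $S(t-v)S(v-u)=S(t-u)$, which here plays the role that additivity plays in the classical sewing argument. First I would fix $(t,s)\in\Delta_T$ together with an arbitrary partition $\aP=\{s=\tau_0<\tau_1<\dots<\tau_N=t\}$ of $[s,t]$ and record the exact telescoping identity
\begin{align*}
(\hat\delta\aI\Xi)_{ts}=\sum_{i=1}^{N}S(t-\tau_i)(\hat\delta\aI\Xi)_{\tau_i\tau_{i-1}}=\sum_{[u,v]\in\aP}S(t-v)(\hat\delta\aI\Xi)_{vu}.
\end{align*}
This holds because $S(t-\tau_i)(\hat\delta\aI\Xi)_{\tau_i\tau_{i-1}}=S(t-\tau_i)\aI\Xi_{\tau_i}-S(t-\tau_i)S(\tau_i-\tau_{i-1})\aI\Xi_{\tau_{i-1}}=S(t-\tau_i)\aI\Xi_{\tau_i}-S(t-\tau_{i-1})\aI\Xi_{\tau_{i-1}}$, so that the sum collapses to $\aI\Xi_t-S(t-s)\aI\Xi_s=(\hat\delta\aI\Xi)_{ts}$.

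Next I would subtract off the Riemann-type sum. Writing $(\hat\delta\aI\Xi)_{vu}=\Xi_{vu}+\big((\hat\delta\aI\Xi)_{vu}-\Xi_{vu}\big)$ and invoking \eqref{property_IXiest1}, this yields
\begin{align*}
\Big|(\hat\delta\aI\Xi)_{ts}-\sum_{[u,v]\in\aP}S(t-v)\Xi_{vu}\Big|
\leq\Big(\sup_{0\leq r\leq T}\|S(r)\|_{\mathcal{L}(W,W)}\Big)\,Cc_2\sum_{[u,v]\in\aP}(v-u)^{\rho}.
\end{align*}
Since an analytic $C_0$-semigroup is uniformly bounded on the compact interval $[0,T]$, and since $\sum_{[u,v]\in\aP}(v-u)^{\rho}\leq|\aP|^{\rho-1}\sum_{[u,v]\in\aP}(v-u)=|\aP|^{\rho-1}(t-s)$, the hypothesis $\rho>1$ forces the right-hand side to $0$ as $|\aP|\to0$. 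This establishes \eqref{property_IXieq}.

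I do not expect a genuine obstacle here: the whole content is the telescoping identity, which rests on the semigroup law (the feature distinguishing this setting from the classical, purely additive sewing lemma), together with the uniform bound $\sup_{r\in[0,T]}\|S(r)\|_{\mathcal{L}(W,W)}<\infty$. It is worth stressing that no separate argument for the existence of the limit is needed: $\aI\Xi$ is already provided and uniquely characterized by Theorem~\ref{lemma_sewing}, so the computation above simultaneously shows that the limit of the finite sums exists and identifies it with $(\hat\delta\aI\Xi)_{ts}$.
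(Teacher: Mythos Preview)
Your argument is correct. The telescoping identity $(\hat\delta\aI\Xi)_{ts}=\sum_{[u,v]\in\aP}S(t-v)(\hat\delta\aI\Xi)_{vu}$ follows exactly as you say from the semigroup law, and combining it with \eqref{property_IXiest1} and the elementary bound $\sum_{[u,v]\in\aP}(v-u)^{\rho}\leq|\aP|^{\rho-1}(t-s)$ gives the claim immediately.

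Note, however, that the present paper does not actually supply a proof of this corollary: it is stated with a reference to Corollary~4.3 of \cite{HesseNeamtu}, where the argument is carried out. Your write-up is the standard proof one would expect in that reference, and there is nothing to compare against here beyond confirming that your reasoning is sound.
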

\begin{remark}
	Corollary~\ref{corollary_sewing_integral} implies the additivity of the rough integral.
\end{remark}
In order to introduce the shift property of the rough integral $\mathcal{I}\Xi$ we recall that for $\tau>0$
\begin{align*}
\widetilde{\theta}_\tau \Xi_{vu} = \Xi_{v+\tau,u+\tau},
\end{align*}
see Section~\ref{preliminaries}.
Considering this, one can easily verify the shift property of $\mathcal{I}\Xi$.
\begin{lemma}[Shift property, Corollary~4.5~\cite{HesseNeamtu}]\label{lemma_sewing_shift}
	Under the assumptions of Theorem~\ref{lemma_sewing} we have 	\begin{align*}
	(\hat\delta \aI \Xi)_{ts} = (\hat\delta \aI \widetilde{\theta}_{\tau} \Xi)_{t-\tau,s-\tau}, ~~\mbox{for } \tau \leq s \leq t.
	\end{align*}
\end{lemma}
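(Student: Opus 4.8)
The plan is to read the shift property off the finite-sum representation of the sewn integral (Corollary~\ref{corollary_sewing_integral}), using that every ingredient of the Sewing Lemma is invariant under time translation.

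First I would verify that $\widetilde{\theta}_{\tau}\Xi$ is again an admissible approximation term, now on $\Delta_{T-\tau}$. Since $(\widetilde{\theta}_{\tau}\Xi)_{vu} = \Xi_{v+\tau,u+\tau}$ and the bounds \eqref{assumption_Xi} and \eqref{assumption_deltaXi1} only see the length $v-u$, which is unchanged by the shift, the term $\widetilde{\theta}_{\tau}\Xi$ satisfies the hypotheses of Theorem~\ref{lemma_sewing} with the same constants $c_1,c_2$ and the same exponents $\alpha,\rho$; it is plainly continuous on $\Delta_{T-\tau}$ and vanishes on the diagonal. Hence $\aI\widetilde{\theta}_{\tau}\Xi\in C([0,T-\tau],W)$ is well defined and $(\hat\delta\aI\widetilde{\theta}_{\tau}\Xi)_{t-\tau,s-\tau}$ makes sense whenever $\tau\le s\le t\le T$.

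Next, for fixed such $s,t$, I would expand both sides via Corollary~\ref{corollary_sewing_integral}: $(\hat\delta\aI\Xi)_{ts} = \lim_{|\aP|\to0}\sum_{[u,v]\in\aP}S(t-v)\Xi_{vu}$ with $\aP$ ranging over partitions of $[s,t]$, while $(\hat\delta\aI\widetilde{\theta}_{\tau}\Xi)_{t-\tau,s-\tau} = \lim_{|\aP'|\to0}\sum_{[u',v']\in\aP'}S((t-\tau)-v')(\widetilde{\theta}_{\tau}\Xi)_{v'u'}$ with $\aP'$ ranging over partitions of $[s-\tau,t-\tau]$. The map $x\mapsto x-\tau$ is a mesh-preserving bijection between the partitions of $[s,t]$ and those of $[s-\tau,t-\tau]$, and for a matched pair, setting $u'=u-\tau$ and $v'=v-\tau$, each summand satisfies $S((t-\tau)-v')(\widetilde{\theta}_{\tau}\Xi)_{v'u'} = S(t-v)\Xi_{vu}$; thus the two sums agree term by term for every corresponding pair of partitions. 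Taking the limit along a common cofinal family (e.g.\ dyadic refinements, whose meshes vanish on both intervals simultaneously) yields the asserted identity.

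I do not anticipate a genuine obstacle: the only points needing a line of justification are the translation invariance of the admissibility hypotheses for $\widetilde{\theta}_{\tau}\Xi$ (immediate, as above) and the remark that the limit in Corollary~\ref{corollary_sewing_integral} may be taken along a fixed sequence of partitions, so that the two sides are compared along corresponding sequences. As an alternative one could argue by uniqueness: restricting $\Xi$ to $[\tau,T]$ and using additivity of the rough integral, $(\hat\delta\aI\Xi)_{ts}$ for $\tau\le s\le t\le T$ coincides with the increment of the sewn integral of $\Xi|_{[\tau,T]}$; translating this interval by $-\tau$ turns $\Xi|_{[\tau,T]}$ into $\widetilde{\theta}_{\tau}\Xi|_{[0,T-\tau]}$ and, by the uniqueness part of Theorem~\ref{lemma_sewing} applied on the shifted interval, identifies the two increments. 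I would nonetheless present the finite-sum computation as the main argument, as it is the most transparent.
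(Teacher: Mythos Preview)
Your proposal is correct. The paper does not give a detailed proof of this lemma; it simply remarks that ``one can easily verify the shift property of $\mathcal{I}\Xi$'' and cites \cite{HesseNeamtu}. Your argument via the finite-sum representation of Corollary~\ref{corollary_sewing_integral}, together with the observation that the hypotheses \eqref{assumption_Xi}--\eqref{assumption_deltaXi1} depend only on the increment lengths, is precisely the natural verification the paper alludes to.
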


In order to obtain a global solution for~\eqref{eq1} we have to precisely analyze the spatial regularity of $\aI\Xi$. To this aim we formulate the main result of this section.
\begin{corollary}\label{corollary_sewing_epsestimate_adapted}
	Additionally to the restrictions of Theorem~\ref{lemma_sewing} we further assume that
	\begin{align*}
	\left|S(v-u) \Xi_{vu} \right|_{D_\eps} \leq c_1' \left(v-u \right)^{\alpha'}
	\end{align*}
	where $0\leq\alpha'\leq 1$ and $0\leq\varepsilon<1$. Then we have
	\begin{align*}
	\left|(\hat\delta\aI\Xi)_{ts} \right|_{D_\eps} \leq C \left(c_1' (t-s)^{\alpha'} + c_2 (t-s)^{\rho-\eps}\right).
	\end{align*}
\end{corollary}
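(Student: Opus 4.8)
The plan is to write $(\hat\delta\aI\Xi)_{ts}$ as a geometric telescoping sum of short increments accumulating at $t$ and to estimate each summand in $D_\eps$ using the new hypothesis together with \eqref{property_IXiest1} and \eqref{hg1}. The algebraic input is that $J:=\hat\delta\aI\Xi$ is $\hat\delta_2$-closed: a direct computation from the definitions of $\hat\delta$ and $\hat\delta_2$ shows $(\hat\delta_2 J)_{t\tau s}=0$, i.e.
\begin{align*}
J_{ts}=J_{t\tau}+S(t-\tau)J_{\tau s},\qquad s\le\tau\le t .
\end{align*}
Note that one cannot argue by splitting $J_{ts}=\Xi_{ts}+(J_{ts}-\Xi_{ts})$, since $\Xi_{ts}$ itself need not belong to $D_\eps$; the telescoping below is exactly what avoids this.

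First I would fix $(t,s)\in\Delta_{T}$ and put $m_k:=t-2^{-k}(t-s)$ for $k\ge 0$, so that $m_0=s$, $m_k\uparrow t$, and — this is the point of the choice —
\begin{align*}
t-m_k=m_k-m_{k-1}=2^{-k}(t-s)=:\ell_k ,\qquad k\ge 1 .
\end{align*}
Iterating the relation above with $\tau=m_1,m_2,\dots$ gives, for every $N\ge1$,
\begin{align*}
J_{ts}=J_{t,m_N}+\sum_{k=1}^{N}S(t-m_k)\,J_{m_k,m_{k-1}} .
\end{align*}
Since $\aI\Xi\in C([0,T],W)$ and $(S(t))_{t\ge0}$ is strongly continuous, $J_{t,m_N}=\aI\Xi_t-S(t-m_N)\aI\Xi_{m_N}\to 0$ as $N\to\infty$ (alternatively one invokes \eqref{property_deltaIXiest}), so $J_{ts}=\sum_{k\ge1}S(t-m_k)J_{m_k,m_{k-1}}$ with convergence in $W$.

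Then I would estimate the $k$-th summand in $D_\eps$ by decomposing, using $t-m_k=m_k-m_{k-1}$ on the first term,
\begin{align*}
S(t-m_k)J_{m_k,m_{k-1}}=S(m_k-m_{k-1})\Xi_{m_k,m_{k-1}}+S(t-m_k)\big(J_{m_k,m_{k-1}}-\Xi_{m_k,m_{k-1}}\big).
\end{align*}
By the new hypothesis the first term has $D_\eps$-norm at most $c_1'\ell_k^{\alpha'}$; for the second, \eqref{hg1} gives $\|S(t-m_k)\|_{\aL(W,D_\eps)}\le C\ell_k^{-\eps}$ and \eqref{property_IXiest1} gives $|J_{m_k,m_{k-1}}-\Xi_{m_k,m_{k-1}}|\le Cc_2\ell_k^{\rho}$, so its $D_\eps$-norm is at most $Cc_2\ell_k^{\rho-\eps}$. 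Summing the geometric series $\sum_{k\ge1}\ell_k^{\alpha'}=(t-s)^{\alpha'}\sum_{k\ge1}2^{-k\alpha'}$ and $\sum_{k\ge1}\ell_k^{\rho-\eps}=(t-s)^{\rho-\eps}\sum_{k\ge1}2^{-k(\rho-\eps)}$ — both convergent, using $\alpha'>0$ and $\rho-\eps>0$ (the latter since $\rho>1>\eps$) — shows that the series for $J_{ts}$ converges absolutely in the Banach space $D_\eps$ and that
\begin{align*}
|(\hat\delta\aI\Xi)_{ts}|_{D_\eps}\le C\big(c_1'(t-s)^{\alpha'}+c_2(t-s)^{\rho-\eps}\big),
\end{align*}
which is the assertion.

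The one genuinely delicate point is the choice of telescoping. Taking midpoints, so that the remaining time $t-m_k$ equals the increment length $m_k-m_{k-1}$, is precisely what allows $S(t-m_k)\Xi_{m_k,m_{k-1}}$ to be controlled by the hypothesis $|S(v-u)\Xi_{vu}|_{D_\eps}\le c_1'(v-u)^{\alpha'}$ with no loss of regularity. A uniform dyadic partition of $[s,t]$, or a direct appeal to the finite-sum formula \eqref{property_IXieq}, would instead produce terms $S(t-v)\Xi_{vu}$ with $t-v$ unrelated to $v-u$ — and $t-v=0$ on the last interval, where no $D_\eps$-bound on $\Xi_{vu}$ is available — whose $D_\eps$-norms do not sum; the cancellation from \eqref{assumption_deltaXi1} would then have to be exploited much more carefully.
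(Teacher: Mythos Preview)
Your argument is correct and complete. Both your proof and the paper's exploit the same crucial coincidence---arranging matters so that the semigroup time matches the increment length, which is exactly what the hypothesis $|S(v-u)\Xi_{vu}|_{D_\eps}\le c_1'(v-u)^{\alpha'}$ demands---but they organise this differently. The paper works with the full dyadic Riemann sums $N^n_{ts}=\sum_{[u,v]\in\aP_n}S(t-v)\Xi_{vu}$, drops the last subinterval to form $\overline{N}^n_{ts}$, telescopes $\overline{N}^n-\overline{N}^{n+1}$ (which produces a sum of $S(t-v)(\hat\delta_2\Xi)_{vmu}$ plus one boundary term $S(t-\overline{v}_{n+1})\Xi_{\overline{v}_{n+1}\overline{v}_n}$ where the dyadic structure forces $t-\overline{v}_{n+1}=\overline{v}_{n+1}-\overline{v}_n$), and then identifies the $D_\eps$-limit $\overline{N}$ with $\hat\delta\aI\Xi$ via Corollary~\ref{corollary_sewing_integral}. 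Your route bypasses the Riemann-sum representation entirely: you use only the $\hat\delta_2$-closedness of $J=\hat\delta\aI\Xi$ and the single estimate~\eqref{property_IXiest1}, telescoping along the geometric sequence $m_k=t-2^{-k}(t-s)$ so that $t-m_k=m_k-m_{k-1}$ holds for \emph{every} summand. This is more economical and self-contained; the paper's approach stays closer to the construction of $\aI\Xi$ and makes the link to~\cite[Corollary~4.6]{HesseNeamtu} transparent. (Both arguments tacitly need $\alpha'>0$ for the $c_1'$-series to converge, as you note; the paper's summability claim relies on the same restriction.)
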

\begin{proof}
	The computation is similar to~\cite[Corollary~4.6]{HesseNeamtu}. We define $\aP_n$ as the $n$-th dyadic partition of $[s,t]$ and set
	\begin{align*}
	N^n_{ts} &:= \sum\limits_{\left[u,v\right]\in \aP_n} S(t-v) \Xi_{vu},\\
	\oN^n_{ts} &:= \sum\limits_{\substack{\left[u,v\right]\in \aP_n\\ v \neq t}} S(t-v) \Xi_{vu}, \\
	\overline{v}_n &:= \max\left\{v < t \colon [u,v] \in \aP_n \right\}.
	\end{align*}
	By standard computation we get
	\begin{align*}
	\oN^n_{ts} - \oN^{n+1}_{ts} 
	=\sum\limits_{\substack{\left[u,v\right]\in \aP_n \\ v \neq t}} S(t-v) (\hat\delta_2 \Xi)_{vmu} - S(t-\ov_{n+1}) \Xi_{\ov_{n+1}\ov_n}. 
	\end{align*}
	Hence we obtain
	\begin{align*}
	\left|\oN^n_{ts} - \oN^{n+1}_{ts} \right|_{D_\eps}
	&\leq \left|S(t-\ov_{n+1}) \Xi_{\ov_{n+1}\ov_n}\right|_{D_\eps}
	+ \sum\limits_{\substack{\left[u,v\right]\in \aP_n \\ v \neq t}} \left| S(t-v) (\hat\delta_2 \Xi)_{vmu} \right|_{D_\eps} .
	\end{align*}
	Note that $t-\ov_{n+1}=\ov_{n+1}-\ov_n$ since we consider a dyadic partition. Regarding our assumptions, this further results in
	\begin{align*}
	\left|\oN^n_{ts} - \oN^{n+1}_{ts} \right|_{D_\eps} 
	&\leq c_1' (\ov_{n+1}-\ov_n)^{\alpha'} 
	+ C c_2 \sum\limits_{\substack{\left[u,v\right]\in \aP_n \\ v \neq t}} \left(t-v \right)^{-\eps} \left(v-u\right)^{\rho} \\
	&\leq C \left(c_1' (t-s)^{\alpha'} 2^{-n\alpha'} + c_2 (t-s)^{\rho-\eps} 2^{-n(\rho-1)} \right).
	\end{align*}
	Consequently, the previous expression is summable and yields that $\oN^n_{ts} \to \oN_{ts}$ (in $D_{\varepsilon}$) for all $0 \leq s <t \leq T$.
	Corollary \ref{corollary_sewing_integral} entails
	\begin{align*}
	\left|(\hat{\delta}\aI\Xi)_{ts}-\oN_{ts} \right| = \lim\limits_{n \to \infty} \left|N^n_{ts}-\oN^n_{ts} \right|
	= \lim\limits_{n \to \infty}\left|\Xi_{t \ov_n} \right|
	\stackrel{\eqref{assumption_Xi}}{\leq} 
	c_1 \lim\limits_{n \to \infty} \left(t-\ov_{n+1} \right)^\alpha =0.
	\end{align*}
	Hence, we know that $\oN \equiv (\hat{\delta}\aI\Xi)$ and finally infer that
	\begin{align*}
	\left|(\hat\delta \aI \Xi)_{ts} \right|_{D_{\eps}} 
	= \left|\oN_{ts} \right|_{D_{\eps}} 
	\leq C \left(c_1' (t-s)^{\alpha'} + c_2 (t-s)^{\rho-\eps} \right).
	\end{align*}
	This proves the statement.
	\qed 
\end{proof}\\

As already emphasized the spacial regularity of $\mathcal{I}\Xi$ is essential for the computation in Section~\ref{sect:global:solution}.

\section{Solution theory for rough SPDEs}\label{local:solution}
 For a better comprehension of Section~\ref{sect:global:solution} we point out certain results regarding the existence and uniqueness of a local solution for~\eqref{eq1}. Here we consider another approach than in~\cite{HesseNeamtu, GubinelliTindel} which finally enables us to obtain a global-in-time solution. 
 
 \begin{remark}
 	In this setting $V\otimes V $ denotes the usual tensor product of Hilbert spaces. If one wishes to work in Banach spaces, then one should consider the projective tensor product, since the property
 	\begin{align*}
 	\mathcal{L}(V,\mathcal{L}(V,W))\hookrightarrow \mathcal{L} (V\otimes V, W)
 	\end{align*} 
 	is required.
 	This is known to hold true, consult Theorem 2.9 in \cite{Ryan}.
 	In the following, for notational simplicity we drop the tensor symbol. 
 \end{remark}

We firstly indicate a heuristic computation which is required in order to construct the rough integral~\eqref{integral} and therefore to give a pathwise meaning to the solution~of~\eqref{eq1}.~These deliberations are rigorously justified in~\cite{HesseNeamtu}, which essentially combines the techniques in~\cite{GubinelliTindel} and~\cite{GarridoLuSchmalfuss2}. Here we only want to provide the general intuition of how the solution of~\eqref{eq1} should look like and focus on its global existence.\\

The strategy to define~\eqref{integral} relies on an approximation procedure. We firstly consider a smooth path $\omega$ and a continuous trajectory $y$. The general argument eventually follows considering smooth approximations of $\omega$, as shortly indicated below. Our aim is to define~\eqref{integral} using Riemann-Stieltjes sums and a Taylor expansion for $G$. By a formal computation, this reads as

\begin{align*}
\int\limits_{0}^{t}{S(t-r)G(y_r)}d\omega_r 
= &\sum\limits_{[u,v]\in \aP} S(t-v) \int\limits_{u}^{v}{S(v-r)G(y_r)}d\omega_r \\
\approx &\sum\limits_{[u,v]\in \aP} S(t-v) \Big[\int\limits_{u}^{v}{S(v-r)G(y_u)} d\omega_r 
+ \int\limits_{u}^{v}{S(v-r)DG(y_u)(y_r-y_u)} d\omega_r\Big] \\
=: &\sum\limits_{[u,v]\in \aP} S(t-v) 
\Big[\omega_{vu}^S(G(y_u)) +z_{vu}(DG(y_u))\Big].
\end{align*}
Here we introduced the notation
\begin{align}\label{omegas_h}
\omega^{S}_{vu}(G(y_{u})):=\int\limits_{u}^{v} S(v-r) G (y_{u}) d\omega_{r},
\end{align}
respectively

\begin{align}\label{z_h}
z_{vu}(DG(y_{u})):= \int\limits_{u}^{v} S(v-r)  DG(y_{u}) (\delta y)_{ru}d\omega_{r}.
\end{align}
By a classical integration by parts formula, see Theorem~3.5~in~\cite{Pazy} one can argue that the term $\omega^{S}$ can be defined for a rough input $\omega$. However, we have to continue our deliberations to obtain a meaningful definition of $z$. To this aim we let $E \in \aL(W; \aL(V;W))$ denote a placeholder which stands for $DG$ and consider further Riemann-Stieltjes sums for~\eqref{z_h}. Namely, for a partition $\aP=\aP([s,t])$ we have
\begin{align*}
z_{ts}(E) 
&= \int\limits_{s}^{t}{S(t-r)E (y_r-y_s)} d\omega_r 
=\sum\limits_{[u,v]\in \aP} S(t-v) \int\limits_{u}^{v}{S(v-r)E (y_r-y_s)} d\omega_r \\
= &\sum\limits_{[u,v]\in \aP} S(t-v) \Big[\int\limits_{u}^{v}{S(v-r)E (y_r-S(r-u)y_u)} d\omega_r \\
	&+\int\limits_{u}^{v}{S(v-r)E S(r-u)y_u} d\omega_r
	+\int\limits_{u}^{v}{S(v-r)E y_s} d\omega_r
	\Big],
\end{align*}
where in the second step we subtract the expression $S(r-u)y_{u}$. Regarding this we make following ansatz for the first term of the previous expression. Since $y$ is supposed to solve~\eqref{eq1}, this should satisfy the variation of constants formula
\begin{align*}
y_r-S(r-u)y_u = \int\limits_{u}^{r} {S(r-q)G(y_q)} d\omega_q.
\end{align*}
Plugging this in the expression of $z$, we immediately obtain 
\begin{align*}
z_{ts}(E)  
&=: \sum\limits_{[u,v]\in \aP} S(t-v) \Big[\int\limits_{u}^{v}{S(v-r)E \int\limits_{u}^{r}{S(r-q) G(y_q)}d\omega_q} d\omega_r +a_{vu}(E,y_u)\Big]-\omega^S_{ts}(Ey_s)\\
&\approx \sum\limits_{[u,v]\in \aP} S(t-v) \Big[\int\limits_{u}^{v}{S(v-r)E \int\limits_{u}^{r}{S(r-q) G(y_u)}d\omega_q} d\omega_r +a_{vu}(E,y_u)\Big]-\omega^S_{ts}(Ey_s)
	\\
	=: &\sum\limits_{[u,v]\in \aP} S(t-v) \Big[ b_{vu}(E,G(y_u)) + a_{vu}(E,y_u)\Big] - \omega^S_{ts}(E y_s).
\end{align*}
For simplicity we introduced the notation
\begin{align*}
b_{vu}(E,G(y_{u})) :=\int\limits_{u}^{v} S(v-r) E \int\limits_{u}^{r} S(r-q) G (y_{u}) d\omega_{q}d\omega_{r},
\end{align*}
respectively
\begin{align*}
a_{vu}(E,y_{u}):=\int\limits_{u}^{v} S(v-r) E S (r-u) 
y_{u} d\omega_{r}.
\end{align*}
	This indicates that we have to define $a$, $b$ 
and $\omega^{S}$ in order to fully characterize $z$. At the very first sight, it is not straightforward how to introduce $b$. Therefore we continue our heuristic computation. We let $K$ denote a placeholder which stands for $G$. Again, using Riemann-Stieltjes sums and a suitable approximation of certain terms below we infer that
\begin{align*}
\int\limits_{s}^{t}{S(t-r) E \int\limits_{s}^{r}{S(r-q)K}d\omega_q} d\omega_r 
= &\sum\limits_{[u,v]\in \aP} S(t-v) \int\limits_{u}^{v}{S(v-r) E \int\limits_{s}^{r}{S(r-q)K}d\omega_q} d\omega_r \\
= &\sum\limits_{[u,v]\in \aP} S(t-v) \int\limits_{u}^{v}{S(v-r) E \int\limits_{s}^{u}{S(r-q)K}d\omega_q} d\omega_r \\
&+ \sum\limits_{[u,v]\in \aP} S(t-v) \int\limits_{u}^{v}{S(v-r) E \int\limits_{u}^{r}{S(r-q)K}d\omega_q} d\omega_r
\end{align*}
\begin{align*}
\phantom{
\int\limits_{s}^{t}{S(t-r) E \int\limits_{s}^{r}{S(r-q)K}d\omega_q} d\omega_r}
\approx & \sum\limits_{[u,v]\in \aP} S(t-v) \int\limits_{u}^{v}{S(v-r) E \int\limits_{s}^{u}{S(u-q)K}d\omega_q} d\omega_r \\
&+ \sum\limits_{[u,v]\in \aP} S(t-v) \int\limits_{u}^{v}{S(v-r) E \int\limits_{u}^{r}{K}d\omega_q} d\omega_r \\ 
= :& \sum\limits_{[u,v]\in \aP} S(t-v) \left[\omega^S_{vu} (E \omega^S_{us}(K)) + c_{vu}(E,K) \right].
\end{align*} 
Here
\begin{align*}
c_{ts}(E,K):=\int\limits_{s}^{t} S(t-r) E K (\omega_{r}-\omega_{s}) d\omega_{r}.
\end{align*}

 Motivated by this heuristic computation, one can introduce similar to~\cite{GarridoLuSchmalfuss1, HesseNeamtu} these processes for smooth paths $(\omega^{n},\omegaan)$ approximating $(\omega,\omega^{(2)})$ in the  $d_{\alpha, T}$-metric. Here \begin{align*}
 \omegaan_{ts}:=\int\limits_{s}^{t} (\delta \omega^{n})_{rs} \otimes d\omega^{n}_r.
 \end{align*}
 Thereafter, the passage to the limit entails a suitable construction/interpretation of all these expressions according to~\cite[Section~5]{HesseNeamtu}. More precisely, the following results hold true.
 
	\begin{lemma}\label{lemma:cont:dependence1} We have that
	\begin{align}
	&\omega^{S,n} \to \omega^S \text{ in } C^{\alpha}\left(\left[0,T\right],\aL(\aL(V,W),W) \right) \\
	&a^n \to a \text{ in } C^{\alpha}\left(\left[0,T\right],\aL(\aL(W \otimes V,W) \times W,W) \right) \label{convergence_a}\\
	&c^n \to c \text{ in } C^{2\alpha}\left(\left[0,T\right],\aL(\aL(W \otimes V,W) \times \aL(V,W),W) \right)\\
	&	b^n \to b \text{ in } C^{2\alpha}\left(\left[0,T\right],\aL(\aL(W \otimes V,W) \times \aL(V,D_\beta),W) \right)\label{convergence_b}.
	\end{align}
\end{lemma}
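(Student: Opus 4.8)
The plan is to use that, for a fixed smooth driver, each of the four objects $\omega^S,a,c,b$ is a bounded multilinear map of its placeholder arguments (respectively $G(y_u)$; the pair $(E,y_u)$; the pair $(E,K)$; the pair $(E,K)$), so that convergence in the operator-norm valued H\"older spaces reduces to H\"older estimates uniform over those arguments in the corresponding unit balls; such uniformity is automatic by multilinearity. Writing $\bm\omega^n:=(\omega^n,\omegaan)$ and $\bm\omega:=(\omega,\omegaa)$, the whole limit is then governed by $d_{\alpha,T}(\bm\omega^n,\bm\omega)\to 0$ together with the uniform bound $M:=\sup_n\big(\|\omega^n\|_{C^\alpha}+\|\omegaan\|_{C^{2\alpha}}\big)<\infty$, the latter being a consequence of the former.

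First I would dispatch the two first-order objects. After regularizing the boundary term, integration by parts (Theorem~3.5 in~\cite{Pazy}) gives, for smooth $\omega$,
\[
\omega^S_{ts}(\phi)=S(t-s)\phi\,(\delta\omega)_{ts}+\int_s^t(-A)S(t-r)\,\phi\,(\delta\omega)_{tr}\,dr,
\]
and an analogous identity for $a_{ts}(E,y)$; both right-hand sides make sense for arbitrary $\omega\in C^\alpha([0,T],V)$ and are \emph{linear} in $\omega$. Estimating $(-A)S(t-r)$ through~\eqref{hg1} and using $\int_s^t(t-r)^{\alpha-1}\,dr\le C(t-s)^\alpha$ gives $\|\omega^{S,n}_{ts}-\omega^S_{ts}\|_{\aL(\aL(V,W),W)}\le C(t-s)^\alpha\|\omega^n-\omega\|_{C^\alpha}$ and the same for $a$; since $\|\omega^n-\omega\|_{C^\alpha}\le d_{\alpha,T}(\bm\omega^n,\bm\omega)$, dividing by $(t-s)^\alpha$ and taking suprema proves the first two convergences.

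For the two second-order objects the L\'evy area genuinely enters. For $c$, integration by parts for smooth drivers yields an identity whose leading term is linear in $\omegaan$ and whose remainder is quadratic in $\omega^n$; this identity persists for rough drivers and exhibits $(\omega,\omegaa)\mapsto c$ as locally Lipschitz, so that $\|c^n_{ts}-c_{ts}\|\le C(1+M)(t-s)^{2\alpha}\,d_{\alpha,T}(\bm\omega^n,\bm\omega)\to0$. For $b$ I would instead construct the object through the Sewing Lemma: following the heuristic decomposition above one takes an approximation term of the form $\Xi_{vu}=\omega^S_{vu}\big(E\,\omega^S_{us}(K)\big)+c_{vu}(E,K)$ (with base point $s$ fixed) and verifies~\eqref{assumption_Xi}--\eqref{assumption_deltaXi1} with leading exponent $2\alpha$ and remainder exponent $\rho:=\alpha+2\beta>1$. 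Here assumption $(\textbf{G})$ --- that $K$, the placeholder for $G(y_u)$, lies in $\aL(V,D_\beta)$, together with $\alpha+2\beta>1$ --- is precisely what pushes the $\hat\delta_2\Xi$-exponent above $1$: the extra $\beta$-smoothing of $K$, played off against~\eqref{hg1}, supplies the surplus regularity. By Theorem~\ref{lemma_sewing} the map $\Xi\mapsto\aI\Xi$ is linear with $\|\aI\Xi\|\lesssim c_1+c_2$, so $b:=\aI\Xi$ is well defined and $b^n\to b$ in $C^{2\alpha}$ follows once $\Xi^n-\Xi$ is shown to be small, which one obtains by splitting it into the already-controlled differences $\omega^{S,n}-\omega^S$, $c^n-c$ and bilinear cross terms, each bounded by $d_{\alpha,T}(\bm\omega^n,\bm\omega)$ times a constant uniform in $n$.

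I expect the $b$-term to be the main obstacle. Obtaining the uniform-in-$n$ bound on $\Xi^n$ and the accompanying Lipschitz estimate forces one to peel the double iterated integral into telescoping pieces --- one carrying $\omega^n-\omega$, one carrying $\omegaan-\omegaa$, and Taylor-type remainders of $G$ --- and in each to track the competition between the blow-up of $(-A)S(t-r)$ near the diagonal and the H\"older roughness of the driver; it is exactly here that the budget $\alpha+2\beta>1$ is consumed with nothing to spare. Everything else is a routine, if lengthy, bookkeeping exercise paralleling~\cite[Section~5]{HesseNeamtu}.
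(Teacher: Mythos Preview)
The paper does not give its own proof of this lemma; it only records the statement and refers to \cite[Section~5]{HesseNeamtu} for the construction and convergence.  What the present paper does supply are the closed-form identities \eqref{definition_omegaS}--\eqref{definition_c}, and your treatment of $\omega^S$, $a$, $c$ uses exactly these: each is linear (for $\omega^S$, $a$) or bilinear (for $c$) in the rough-path data, so the Lipschitz bound in $d_{\alpha,T}$ is immediate and your argument there is correct and matches the intended route.

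For $b$, however, your sketch contains a genuine gap.  With the germ $\Xi_{vu}=\omega^S_{vu}(E\,\omega^S_{us}(K))+c_{vu}(E,K)$ (base point $s$ fixed) one computes, using \eqref{algebraic_omegaS} and \eqref{algebraic_c},
\[
(\hat\delta_2\Xi)_{vmu}
=\omega^S_{vm}\!\Big(E\big[K(\delta\omega)_{mu}-\omega^S_{mu}(K)+(\mathrm{Id}-S(m-u))\omega^S_{us}(K)\big]\Big).
\]
The first two pieces combine, via the $D_\beta$-regularity of $K$ and \eqref{hg2}, to order $(v-u)^{2\alpha+\beta}$, which is fine.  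But the third piece gives only
\[
\big|\omega^S_{vm}\big(E(\mathrm{Id}-S(m-u))\omega^S_{us}(K)\big)\big|
\le C\,(v-m)^{\alpha}(m-u)^{\beta}(u-s)^{\alpha},
\]
so after absorbing $(u-s)^{\alpha}\le (t-s)^{\alpha}$ into the constant you are left with remainder exponent $\rho=\alpha+\beta$, not $\alpha+2\beta$.  Since $\beta<\alpha\le\tfrac12$ one has $\alpha+\beta<1$, and Theorem~\ref{lemma_sewing} does not apply.  The $D_\beta$-smoothing of $K$ is used once (to control $\omega^S_{us}(K)$ in $D_\beta$), but there is no second $\beta$ available to push the exponent above $1$; the ``budget $\alpha+2\beta>1$ is consumed with nothing to spare'' claim is therefore not substantiated by this germ.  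The construction in \cite{HesseNeamtu} proceeds differently: rather than sewing $b$ directly, one works with the classically defined $b^n$ for the smooth approximants, establishes the uniform estimate \eqref{estimate_b} and the algebraic identity \eqref{algebraic_b} at that level, and then shows the sequence is Cauchy in $C^{2\alpha}$; the limit is \emph{defined} to be $b$.  If you want to repair your Sewing-Lemma route you will need either a richer germ or an additional reduction that eliminates the $s$-dependent term before sewing.
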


 We collect further results which are essential for the computation in Section~\ref{sect:global:solution}.

  \begin{lemma}
  	Let $K\in\mathcal{L}(V,W)$, $E\in\mathcal{L}(W\otimes V, W) $ and
  	 $(\omega,\omegaa)$ be an $\alpha$-H\"older rough path. Then $\omega^{S}$, $a$ and $c$ can be defined using integration by parts as
  	\begin{align}
  	\label{definition_omegaS}
  	\omega^{S}_{ts}(K) &= S(t-s) K (\delta \omega)_{ts} -A \int\limits_{s}^{t}{S(t-r)K(\delta \omega)_{tr}} dr, \\
  	\label{definition_a}
  	a_{ts}(E,x) &= \omega^{S}_{ts}(Ex) + \int\limits_{s}^{t}{\omega^{S}_{tr}\left(E A S(r-s) x \right)} dr, \\
  	\label{definition_c}
  	c_{ts}(E,K)&= \omega^{S}_{ts} (EK \left(\delta \omega \right)_{ts}) 
  	- S(t-s) EK \omegaa_{ts}- \int\limits_{s}^{t} A S(t-r) EK \omegaa_{tr} dr.
  	\end{align}
  \end{lemma}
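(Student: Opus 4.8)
The plan is to verify each of the three identities \eqref{definition_omegaS}, \eqref{definition_a}, \eqref{definition_c} by first establishing them for a smooth driver $\omega^n$ via the classical integration-by-parts formula (Theorem~3.5 in \cite{Pazy}), and then passing to the limit using the convergences collected in Lemma~\ref{lemma:cont:dependence1}. For the smooth case of \eqref{definition_omegaS}, I would start from $\omega^{S,n}_{ts}(K)=\int_s^t S(t-r)K\,d\omega^n_r$, integrate by parts in $r$ (using $\tfrac{d}{dr}S(t-r)=AS(t-r)$ and $\tfrac{d}{dr}(\delta\omega^n)_{tr}=-\dot\omega^n_r$ so that $d[(\delta\omega^n)_{tr}]=-d\omega^n_r$), which yields exactly $S(t-s)K(\delta\omega^n)_{ts}-A\int_s^t S(t-r)K(\delta\omega^n)_{tr}\,dr$; note the boundary term at $r=t$ vanishes because $(\delta\omega^n)_{tt}=0$. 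Since the right-hand side depends on $\omega^n$ only through $(\delta\omega^n)$ in a way that is continuous with respect to the $C^\alpha$-norm (the operator $K\in\mathcal{L}(V,W)$ is fixed and the smoothing estimate \eqref{hg1} makes $A\int_s^t S(t-r)\,dr$ a bounded operation with the correct H\"older scaling), and since $\omega^{S,n}\to\omega^S$ by Lemma~\ref{lemma:cont:dependence1}, the identity persists in the limit.

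For \eqref{definition_a}, in the smooth setting I would write $a^n_{ts}(E,x)=\int_s^t S(t-r)ES(r-s)x\,d\omega^n_r$ and again integrate by parts, but now the integrand carries the extra $r$-dependent factor $S(r-s)x$ whose derivative is $AS(r-s)x=S(r-s)Ax$; the product rule under integration by parts produces the boundary term $S(t-s)Ex(\delta\omega^n)_{ts}$ (which is $\omega^{S,n}_{ts}(Ex)$ once the $\omega^{S}$-representation is substituted) plus the integral of $S(t-r)EAS(r-s)x$ against $(\delta\omega^n)_{tr}\,dr$, and the latter is recognized as $\int_s^t \omega^{S,n}_{tr}(EAS(r-s)x)\,dr$ after reinserting the $\omega^S$ formula pointwise in $r$. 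Then Lemma~\ref{lemma:cont:dependence1} gives $a^n\to a$ and $\omega^{S,n}\to\omega^S$, so the identity transfers; one must check that the inner $r$-integral of $\omega^{S,n}_{tr}(EAS(r-s)x)$ converges, which follows from uniform (in $r$) convergence of $\omega^{S,n}$ together with the integrable singularity of $\|AS(r-s)\|\lesssim (r-s)^{-1}$ against the factor $(t-r)^\alpha$ coming from the $\omega^S$-term — here one uses $\alpha>0$ to ensure integrability near $r=s$, combined with the fact that $x\in W$.

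The identity \eqref{definition_c} is the most delicate and I expect it to be the main obstacle. For smooth $\omega^n$ one has $c^n_{ts}(E,K)=\int_s^t S(t-r)EK(\delta\omega^n)_{rs}\,d\omega^n_r$, and the natural move is integration by parts with $d[(\delta\omega^n)_{rs}]=d\omega^n_r$, which immediately creates the second-order object $\int_s^t(\delta\omega^n)_{rs}\otimes d\omega^n_r=\omegaan_{ts}$; the subtlety is that the boundary term involves $S(t-r)EK(\delta\omega^n)_{rs}\otimes(\delta\omega^n)_{rs}$-type expressions that must be carefully matched to $S(t-s)EK\omegaan_{ts}$ and $\int_s^t AS(t-r)EK\omegaan_{tr}\,dr$, and one has to be scrupulous about the order of the two copies of $V$ in the tensor (the $EK$ lives in $\mathcal{L}(V\otimes V,W)$ via the embedding in the earlier remark). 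I would handle this by writing $(\delta\omega^n)_{rs}=(\delta\omega^n)_{ts}-(\delta\omega^n)_{tr}$, splitting the integral accordingly, using Chen's relation \eqref{chen} to relate $\omegaan_{tr}$, $\omegaan_{ts}$, and the product of increments, and treating the $(\delta\omega^n)_{ts}$-piece by the already-established $\omega^S$-representation. The convergence $c^n\to c$ in $C^{2\alpha}$ from Lemma~\ref{lemma:cont:dependence1}, the convergence $\omega^{(2),n}\to\omega^{(2)}$ in the rough-path metric, and the smoothing bound $\|AS(t-r)\|\lesssim(t-r)^{-1}$ paired with $|\omegaan_{tr}|\lesssim(t-r)^{2\alpha}$ (so the $r$-integral converges since $2\alpha>0$) then close the argument. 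The cleanest write-up is probably to prove the three formulas in the order \eqref{definition_omegaS}, \eqref{definition_a}, \eqref{definition_c}, reusing each in the next, and to emphasize that the right-hand sides are manifestly continuous functionals of $(\omega,\omega^{(2)})$ in the $d_{\alpha,T}$-metric, so that density of smooth paths plus Lemma~\ref{lemma:cont:dependence1} yields the result for a general $\alpha$-H\"older rough path.
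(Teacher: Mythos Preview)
Your overall strategy---establish each identity for a smooth driver $\omega^n$ via the classical integration-by-parts formula, then pass to the limit through Lemma~\ref{lemma:cont:dependence1}---is precisely the intended approach (the paper itself does not prove this lemma but delegates to \cite{HesseNeamtu}, where exactly this smooth-approximation argument is carried out). Your handling of \eqref{definition_omegaS} and \eqref{definition_c} is sound; in particular the integrability of $\int_s^t AS(t-r)EK\omegaa_{tr}\,dr$ does follow from $\|AS(t-r)\|\lesssim (t-r)^{-1}$ against $|\omegaa_{tr}|\lesssim (t-r)^{2\alpha}$, and you are right to flag the tensor ordering and the use of Chen's relation in the $c$-computation as the delicate point.

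There is, however, a genuine slip in your argument for \eqref{definition_a}. You claim the integral $\int_s^t \omega^S_{tr}(EAS(r-s)x)\,dr$ is absolutely convergent because ``$\alpha>0$ ensures integrability near $r=s$'' of $\|AS(r-s)\|\lesssim(r-s)^{-1}$ against the factor $(t-r)^\alpha$. This is wrong: the factor $(t-r)^\alpha$ is bounded away from zero near $r=s$ (it is $\sim(t-s)^\alpha$ there) and does nothing to tame the singularity $(r-s)^{-1}$, which is not integrable. For general $x\in W$ the integrand is \emph{not} Bochner integrable on $[s,t]$. The standard remedy is to establish \eqref{definition_a} first for $x$ in a dense subspace such as $D(A)$---where $AS(r-s)x$ is bounded uniformly in $r$ and the integral is manifestly absolutely convergent---verify the bound \eqref{estimate_a} on that subspace, and then extend $a_{ts}(E,\cdot)$ to all of $W$ by continuity. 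Alternatively one can show that the $dr$-integral converges as an improper integral at $r=s$, but this requires a separate computation and is not delivered by the estimate you wrote.
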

  
  As precisely stated in~\cite[Section~5]{HesseNeamtu} these processes satisfy important analytic and algebraic properties which perfectly fit in the rough path framework. We shortly indicate them together with a generalization which will be required in Section~\ref{sect:global:solution}.
  \begin{lemma}[Analytic properties, Lemma~5.4 and Lemma~5.11~\cite{HesseNeamtu}]\label{supporting_analytic}
  The following analytic estimates hold true: 
  	\begin{align}
  	&\left|\omega^{S}_{ts}(K) \right| \leq C \ltn \omega \rtn_{\alpha} \left| K \right| \left(t-s \right)^\alpha, \label{estimate_omegaS}\\
  	&\left| a_{ts}(E,x) \right| \leq C \ltn \omega \rtn_{\alpha} \left| E \right| |x|_{W} \left(t-s \right)^{\alpha}, ~~\mbox{for } x\in W\label{estimate_a},\\
  	&\left|a_{ts}(E,x) - \omega^{S}_{ts}(Ex) \right| \leq C \ltn \omega \rtn_{\alpha} \left| E \right| \left|x \right|_{D_\gamma} (t-s)^{\alpha+\gamma}, ~~\mbox{for } x\in D_{\gamma} \mbox{ and } 0< \gamma\leq 1,\label{estimate_a2}\\
  	&\left|c_{ts}(E,K) \right| \leq C \left(\ltn \omega \rtn_{\alpha}+\left\|\omegaa \right\|_{2\alpha} \right) \left|E \right| \left| K \right| \left(t-s \right)^{2\alpha},\label{estimate_c}\\
  	&\left|b_{ts}(E,K) \right| \leq C \left| E \right| \left| K \right|_{\mathcal{L}(V,D_{\beta})} \left(\ltn \omega\rtn^2_\alpha + \left\| \omegaa\right\|_{2\alpha} \right) \left(t-s \right)^{2\alpha}. \label{estimate_b}
  	\end{align}
  \end{lemma}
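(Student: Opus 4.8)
The plan is to reduce every bound to the integration-by-parts representations \eqref{definition_omegaS}--\eqref{definition_c} (and, for $b$, to the Sewing Lemma, Theorem~\ref{lemma_sewing}) and then to estimate term by term using the parabolic smoothing bounds \eqref{hg1}--\eqref{hg2} together with the definitions of $\ltn\omega\rtn_\alpha$ and $\left\|\omegaa\right\|_{2\alpha}$. The recurring mechanism is that a term of the form $A\int_s^t S(t-r)M\,\zeta_{tr}\,dr$ with $|M|\le c$ and $|\zeta_{tr}|\le c'(t-r)^\theta$ has integrand bounded by $C\,c\,c'\,(t-r)^{\theta-1}$, hence is integrable on $[s,t]$ as soon as $\theta>0$ and produces the prefactor $(t-s)^\theta$.

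For \eqref{estimate_omegaS} I would use \eqref{definition_omegaS}: the boundary term is $\le C|K|\ltn\omega\rtn_\alpha(t-s)^\alpha$ by \eqref{hg1}, and the integral term is the mechanism above with $\theta=\alpha$. Estimate \eqref{estimate_c} follows in the same way from the three summands of \eqref{definition_c}, the first being controlled by \eqref{estimate_omegaS} and the last being the mechanism with $\theta=2\alpha$. For \eqref{estimate_a} with $x\in W$ the representation \eqref{definition_a} does not suffice, since $|EAS(r-s)x|\le C|E|(r-s)^{-1}|x|$ and $\int_s^t(r-s)^{-1}(t-r)^\alpha\,dr$ diverges at $r=s$; I would instead go back to $a_{ts}(E,x)=\int_s^t S(t-r)ES(r-s)x\,d\omega_r$ (first for a smooth approximant, then pass to the limit via Lemma~\ref{lemma:cont:dependence1}), split $[s,t]$ at its midpoint $m$, and integrate by parts on $[s,m]$ based at $\omega_s$ and on $[m,t]$ based at $\omega_t$, so that on each half one of the two singularities $\|AS(t-r)\|_{\aL(W)}\le C(t-r)^{-1}$, $\|AS(r-s)x\|_W\le C(r-s)^{-1}|x|$ is bounded by $C(t-s)^{-1}$ while the other is paired with a Hölder factor and becomes integrable; the boundary terms are all $\le C|E||x|\ltn\omega\rtn_\alpha(t-s)^\alpha$ by \eqref{hg1}.

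The generalized estimate \eqref{estimate_a2} is by contrast clean: from \eqref{definition_a},
\begin{align*}
a_{ts}(E,x)-\omega^S_{ts}(Ex)=\int_s^t\omega^S_{tr}\!\left(EAS(r-s)x\right)dr,
\end{align*}
and for $x\in D_\gamma$ with $0<\gamma\le 1$ the estimate \eqref{hg1} gives $|AS(r-s)x|_W=|(-A)^{1-\gamma}S(r-s)(-A)^\gamma x|_W\le C(r-s)^{\gamma-1}|x|_{D_\gamma}$. Inserting this into \eqref{estimate_omegaS} yields $|\omega^S_{tr}(EAS(r-s)x)|\le C\ltn\omega\rtn_\alpha|E|\,|x|_{D_\gamma}\,(r-s)^{\gamma-1}(t-r)^\alpha$, and since $\gamma>0$ the integral $\int_s^t(r-s)^{\gamma-1}(t-r)^\alpha\,dr=C\,(t-s)^{\gamma+\alpha}$ is finite, which gives \eqref{estimate_a2}.

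The main obstacle is \eqref{estimate_b}, because $b$ is an iterated rough integral with a semigroup in the inner slot and admits no closed integration-by-parts form. I would construct it --- as the limit of $b^n_{ts}(E,K)=\int_s^t S(t-r)E\,\omega^{S,n}_{rs}(K)\,d\omega^n_r$, cf.~Lemma~\ref{lemma:cont:dependence1} --- by applying Theorem~\ref{lemma_sewing} to the germ suggested by the heuristic preceding \eqref{omegas_h}--\eqref{z_h}, namely $\Xi_{vu}:=\omega^S_{vu}\!\left(E\,\omega^S_{us}(K)\right)+c_{vu}(E,K)$. Hypothesis~\eqref{assumption_Xi} then follows from \eqref{estimate_omegaS} and \eqref{estimate_c} (bounding the extra $(u-s)^\alpha$ by $(t-s)^\alpha$), while hypothesis~\eqref{assumption_deltaXi1} is verified using Chen's relation \eqref{chen}, the algebraic identities satisfied by $\omega^S$ and $c$, and the smoothing bound \eqref{hg2} applied to $K\in\aL(V,D_\beta)$; it is precisely here that the standing assumption $\alpha+2\beta>1$ from $(\textbf{G})$ is used, to make the defect of order $\rho>1$. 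Since $\omega^S_{ss}(K)=0$ one has $\Xi_{ts}=c_{ts}(E,K)$, so writing $b_{ts}(E,K)=(\hat\delta\aI\Xi)_{ts}$ and combining $|\Xi_{ts}|=|c_{ts}(E,K)|$ with \eqref{property_IXiest1} (together with $(t-s)^\rho\le C(t-s)^{2\alpha}$ on $[0,T]$) yields \eqref{estimate_b}. The delicate point throughout is this verification of the second-order bound for $b$, for which I would follow the bookkeeping of \cite[Section~5]{HesseNeamtu}.
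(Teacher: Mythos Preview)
The paper does not prove this lemma: it is quoted verbatim from \cite[Lemmas~5.4 and~5.11]{HesseNeamtu}, so there is no in-paper argument to compare against. Your sketches for \eqref{estimate_omegaS}, \eqref{estimate_a2} and \eqref{estimate_c} are correct and follow the standard route via the integration-by-parts formulas \eqref{definition_omegaS}--\eqref{definition_c}; the midpoint splitting you propose for \eqref{estimate_a} is also sound (and indeed necessary, since the naive use of \eqref{definition_a} produces a divergent integral, exactly as you observe).

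There is, however, a concrete gap in your treatment of \eqref{estimate_b}. With the germ $\Xi_{vu}=\omega^S_{vu}\!\left(E\,\omega^S_{us}(K)\right)+c_{vu}(E,K)$ (the one suggested by the paper's heuristic), a direct computation using \eqref{algebraic_omegaS} and \eqref{algebraic_c} gives
\[
(\hat\delta_2\Xi)_{vmu}=\omega^S_{vm}\!\left(E\big[K(\delta\omega)_{mu}-\omega^S_{mu}(K)\big]\right)-\omega^S_{vm}\!\left(E(S(m-u)-I)\omega^S_{us}(K)\right).
\]
The first summand is of order $(v-u)^{2\alpha+\beta}$, but the second is only of order $(v-m)^{\alpha}(m-u)^{\beta}(u-s)^{\alpha}$, since $K\in\aL(V,D_\beta)$ gives at best $\omega^S_{us}(K)\in D_\beta$ via Lemma~\ref{omega_s_beta}. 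In the mesh variable this is $(v-u)^{\alpha+\beta}$, and under the standing hypotheses $\beta<\alpha<\tfrac12$ one has $\alpha+\beta<1$, so hypothesis~\eqref{assumption_deltaXi1} of Theorem~\ref{lemma_sewing} fails for this germ; the assumption $\alpha+2\beta>1$ does not rescue it.

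The fix is to replace the first-order piece by $a_{vu}\!\left(E,\omega^S_{us}(K)\right)$. Then the algebraic identity \eqref{algebraic_a} forces the two occurrences of $(S(m-u)-I)\omega^S_{us}(K)$ to cancel, leaving
\[
(\hat\delta_2\Xi)_{vmu}=\omega^S_{vm}\!\left(EK(\delta\omega)_{mu}\right)-a_{vm}\!\left(E,\omega^S_{mu}(K)\right).
\]
Now \eqref{estimate_a2} together with the bound $|K(\delta\omega)_{mu}-\omega^S_{mu}(K)|\le C|K|_{\aL(V,D_\beta)}\ltn\omega\rtn_\alpha(m-u)^{\alpha+\beta}$ (obtained from \eqref{definition_omegaS} and \eqref{hg2}) gives a clean defect of order $\rho=2\alpha+\beta>\alpha+2\beta>1$. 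With this modified germ your Sewing argument goes through and yields \eqref{estimate_b}; note that $\Xi_{ts}=c_{ts}(E,K)$ still holds since $a_{ts}(E,0)=0$.
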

  The next statement gives an extension of~\eqref{estimate_omegaS}.
  \begin{lemma}\label{omega_s_beta}
  	Let $K \in \aL(V,D_\gamma)$ for $0 \leq \gamma \leq 1 $. Then
  	\begin{align}
  	\left|\omega^{S}_{ts}(K) \right|_{D_\gamma} 
  	\leq C \ltn \omega \rtn_{\alpha} \left| K \right|_{\aL(V,D_\gamma)} \left(t-s \right)^\alpha \label{estimate_omegaS2}.
  	\end{align}
  \end{lemma}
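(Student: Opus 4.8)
The plan is to read the bound off directly from the integration-by-parts representation \eqref{definition_omegaS},
\[
\omega^{S}_{ts}(K) = S(t-s) K (\delta \omega)_{ts} - A \int_{s}^{t} S(t-r)K(\delta \omega)_{tr}\, dr,
\]
and to estimate the $D_\gamma$-norm of the two summands separately. This is exactly the argument that yields \eqref{estimate_omegaS} in~\cite{HesseNeamtu}, now carried through with an additional fractional power $(-A)^{\gamma}$ applied everywhere. The only external inputs are the $\alpha$-H\"older bound $|(\delta\omega)_{ts}|_{V}\leq \ltn \omega \rtn_{\alpha}(t-s)^{\alpha}$ and the smoothing estimates \eqref{hg1}.

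For the first summand I would use that an analytic semigroup is uniformly bounded on $D_\gamma$, i.e. $\|S(\tau)\|_{\aL(D_\gamma,D_\gamma)}\leq C$ by \eqref{hg1} with $\eta=\kappa=\gamma$, so that
\[
\left|S(t-s) K (\delta \omega)_{ts}\right|_{D_\gamma}
\leq C\left|K (\delta \omega)_{ts}\right|_{D_\gamma}
\leq C\left|K\right|_{\aL(V,D_\gamma)}\ltn \omega \rtn_{\alpha}(t-s)^{\alpha}.
\]
For the second summand the relevant fact is the regularizing bound $\|S(\tau)\|_{\aL(D_\gamma,D_{1+\gamma})}=\|(-A)^{1+\gamma}S(\tau)\|_{\aL(D_\gamma,W)}\leq C\tau^{-1}$, again a case of \eqref{hg1} (with $\kappa=\gamma$, $\eta=1+\gamma$). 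Granting that $(-A)^{\gamma}$ may be pulled inside the Bochner integral, this gives
\[
\left|(-A)^{\gamma} A \int_{s}^{t} S(t-r)K(\delta \omega)_{tr}\, dr\right|_{W}
\leq \int_{s}^{t}\left|(-A)^{1+\gamma}S(t-r)K(\delta\omega)_{tr}\right|_{W}\, dr
\leq C\left|K\right|_{\aL(V,D_\gamma)}\ltn \omega \rtn_{\alpha}\int_{s}^{t}(t-r)^{\alpha-1}\, dr,
\]
and since $\alpha>0$ the last integral equals $\alpha^{-1}(t-s)^{\alpha}$. Adding the two contributions gives \eqref{estimate_omegaS2}.

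The only delicate point, and hence the main obstacle, is the interchange of the closed operator $(-A)^{1+\gamma}$ with the integral: one has to check that $\int_{s}^{t} S(t-r)K(\delta \omega)_{tr}\, dr$ actually takes values in $D_{1+\gamma}$ and that the operator can be moved under the integral sign. This follows from the closedness of $(-A)^{1+\gamma}$ together with the Bochner integrability of $r\mapsto (-A)^{1+\gamma}S(t-r)K(\delta\omega)_{tr}$ on $[s,t]$, the singularity at $r=t$ being of integrable order $(t-r)^{\alpha-1}$ precisely because $\alpha>0$; the standard closed-operator argument then applies, exactly as in the derivation of \eqref{definition_omegaS} and \eqref{estimate_omegaS} in~\cite{HesseNeamtu}. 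Everything else reduces to the elementary semigroup estimates \eqref{hg1}.
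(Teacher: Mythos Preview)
Your proposal is correct and follows exactly the route the paper indicates: the paper's proof simply refers to the representation \eqref{definition_omegaS} and the computation for \eqref{estimate_omegaS} in~\cite{HesseNeamtu}, which is precisely what you reproduce with the extra factor $(-A)^{\gamma}$. Your discussion of the interchange of $(-A)^{1+\gamma}$ with the Bochner integral is the only point requiring care, and you handle it in the standard way.
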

  \begin{proof}
  	The proof can immediately be derived using~\eqref{definition_omegaS}, see Lemma 5.4~in~\cite{HesseNeamtu} for a detailed computation.	
  	\qed 
  \end{proof}
  \begin{lemma}[Algebraic properties, Lemma~5.3 and Lemma~5.11~\cite{HesseNeamtu}]\label{supporting_algebraic}
  	The algebraic relations are satisfied:
  	\begin{align}
  	&(\hat\delta_{2} \omega^{S})_{t \tau s}(K) = 0, \label{algebraic_omegaS} \\
  	&(\hat\delta_{2} a)_{t \tau s}(E,x) = a_{t \tau} (E,(S(\tau-s)-id)x),\label{algebraic_a}\\
  	&(\hat\delta_{2} c)_{t \tau s}(E,K) = \omega^{S}_{t \tau}(EK (\delta \omega)_{\tau s}), \label{algebraic_c} \\
  	&(\hat\delta_{2} b)_{t \tau s}(E,K) = a_{t\tau} (E,\omega^S_{\tau s}(K)).\label{algebraic_b}
  	\end{align} 
  \end{lemma}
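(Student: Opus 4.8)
The plan is to prove all four identities first for a smooth driving signal $\omega^{n}$ --- where $\omega^{S,n}$, $a^{n}$, $b^{n}$, $c^{n}$ are honest (possibly iterated) Bochner--Stieltjes integrals --- and then to pass to the limit $n\to\infty$ using the convergences of Lemma~\ref{lemma:cont:dependence1}. In the smooth case the only structural ingredients are the semigroup law $S(t-r)=S(t-\tau)S(\tau-r)$ for $r\le\tau\le t$ (so that the bounded operator $S(t-\tau)$ can be pulled in and out of the integrals) and the additivity $\int_{s}^{t}=\int_{s}^{\tau}+\int_{\tau}^{t}$.

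Concretely: for \eqref{algebraic_omegaS}, $\omega^{S,n}_{ts}(K)-\omega^{S,n}_{t\tau}(K)=\int_{s}^{\tau}S(t-r)K\,d\omega^{n}_{r}$ equals $\int_{s}^{\tau}S(t-\tau)S(\tau-r)K\,d\omega^{n}_{r}=S(t-\tau)\omega^{S,n}_{\tau s}(K)$, whence $(\hat\delta_{2}\omega^{S,n})_{t\tau s}(K)=0$. For \eqref{algebraic_a}, split $a^{n}_{ts}(E,x)=\int_{s}^{t}S(t-r)ES(r-s)x\,d\omega^{n}_{r}$ at $\tau$; on $[\tau,t]$ write $S(r-s)-S(r-\tau)=S(r-\tau)(S(\tau-s)-\mathrm{id})x$, which produces precisely $a^{n}_{t\tau}(E,(S(\tau-s)-\mathrm{id})x)$, while the $[s,\tau]$-piece equals $S(t-\tau)a^{n}_{\tau s}(E,x)$ as above. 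For \eqref{algebraic_c}, the same split of $c^{n}_{ts}(E,K)=\int_{s}^{t}S(t-r)EK(\omega^{n}_{r}-\omega^{n}_{s})\,d\omega^{n}_{r}$ uses $(\omega^{n}_{r}-\omega^{n}_{s})-(\omega^{n}_{r}-\omega^{n}_{\tau})=(\delta\omega^{n})_{\tau s}$ on $[\tau,t]$, giving the term $\int_{\tau}^{t}S(t-r)EK(\delta\omega^{n})_{\tau s}\,d\omega^{n}_{r}=\omega^{S,n}_{t\tau}\big(EK(\delta\omega^{n})_{\tau s}\big)$, again with leftover $S(t-\tau)c^{n}_{\tau s}(E,K)$. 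For \eqref{algebraic_b}, split the outer integral of $b^{n}_{ts}(E,K)$ at $\tau$, and on $[\tau,t]$ split the inner one as $\int_{s}^{r}=\int_{s}^{\tau}+\int_{\tau}^{r}$; the $\int_{s}^{\tau}$-part factors as $S(r-\tau)\omega^{S,n}_{\tau s}(K)$, so the associated term is $\int_{\tau}^{t}S(t-r)ES(r-\tau)\omega^{S,n}_{\tau s}(K)\,d\omega^{n}_{r}=a^{n}_{t\tau}\big(E,\omega^{S,n}_{\tau s}(K)\big)$, while the leftover is $S(t-\tau)b^{n}_{\tau s}(E,K)$. In each case collecting terms gives exactly the claimed value of $\hat\delta_{2}$.

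It remains to let $n\to\infty$. The left-hand sides converge by the definition of $\hat\delta_{2}$ together with $\omega^{S,n}\to\omega^{S}$, $a^{n}\to a$, $c^{n}\to c$, $b^{n}\to b$ from Lemma~\ref{lemma:cont:dependence1} and the boundedness of $S(t-\tau)\in\aL(W)$. On the right-hand sides the delicate point is that, for \eqref{algebraic_c} and \eqref{algebraic_b}, the \emph{argument} of $\omega^{S}$ resp.\ $a$ depends on $\omega$: one has $(\delta\omega^{n})_{\tau s}\to(\delta\omega)_{\tau s}$ in $V$ and $\omega^{S,n}_{\tau s}(K)\to\omega^{S}_{\tau s}(K)$ in $W$ (both from $d_{\alpha,T}(\bm{\omega}^{n},\bm{\omega})\to0$, the latter via $\omega^{S,n}\to\omega^{S}$; if one wants $D_{\beta}$-convergence, invoke Lemma~\ref{omega_s_beta} using $K\in\aL(V,D_{\beta})$), and then one splits, e.g., $a^{n}_{t\tau}(E,x^{n})-a_{t\tau}(E,x)=(a^{n}-a)_{t\tau}(E,x)+a^{n}_{t\tau}(E,x^{n}-x)$, bounding the second summand by $C\ltn\omega^{n}\rtn_{\alpha}\lvert E\rvert\,\lvert x^{n}-x\rvert_{W}(t-\tau)^{\alpha}$ via \eqref{estimate_a} with $\ltn\omega^{n}\rtn_{\alpha}$ uniformly bounded, and similarly using \eqref{estimate_omegaS} for the $c$-identity. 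This interchange of the limit with the nonlinear dependence on $\bm{\omega}$ is the only real obstacle; once the uniform bounds of Lemma~\ref{supporting_analytic} and the above convergences of the arguments are in hand it is routine. (Alternatively, \eqref{algebraic_omegaS}, \eqref{algebraic_a}, \eqref{algebraic_c} can be obtained directly from the integration-by-parts representations \eqref{definition_omegaS}--\eqref{definition_c} without any approximation, by the same splitting-at-$\tau$ computation performed on those formulas; only \eqref{algebraic_b}, for which no such closed formula is recorded here, genuinely needs the smooth-approximation route.)
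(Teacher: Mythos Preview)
Your proof is correct. The paper itself does not supply a proof of this lemma --- it simply imports Lemmas~5.3 and~5.11 from \cite{HesseNeamtu} --- so there is no in-paper argument to compare against directly. That said, your approach is precisely the one suggested by the surrounding framework: the processes $\omega^{S}$, $a$, $c$, $b$ are introduced here (and in the cited reference) as limits of their smooth counterparts via Lemma~\ref{lemma:cont:dependence1}, so verifying the algebraic identities for smooth $\omega^{n}$ by splitting the Bochner--Stieltjes integrals at $\tau$ and then passing to the limit is exactly the intended route. Your treatment of the only nontrivial limit step --- the $\omega$-dependent arguments on the right-hand sides of \eqref{algebraic_c} and \eqref{algebraic_b} --- via the triangle-inequality splitting together with the uniform bounds \eqref{estimate_omegaS}, \eqref{estimate_a} and the convergence of $(\delta\omega^{n})_{\tau s}$, $\omega^{S,n}_{\tau s}(K)$ is clean and complete. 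The alternative you mention (deriving \eqref{algebraic_omegaS}--\eqref{algebraic_c} directly from the integration-by-parts formulas \eqref{definition_omegaS}--\eqref{definition_c}) is also valid and is in fact how \cite{HesseNeamtu} handles those three; only \eqref{algebraic_b} genuinely requires the approximation argument there, just as you note.
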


     Putting all arguments from our heuristic computation together, we immediately observe that the role of the abstract approximation term in Theorem~\ref{lemma_sewing} is represented by 
     \begin{align*}
     \Xi_{vu}^{(y)} = \Xi_{vu}^{(y)}(y,z)= \omega^S_{vu}(G(y_u)) + z_{vu}(DG(y_u)).
     \end{align*}
     Verifying~\eqref{assumption_Xi} and~\eqref{assumption_deltaXi1} the existence of the rough integral $\mathcal{I}\Xi^{(y)}$ is obtained. The same holds true for
     \begin{align*}
     \Xi^{(z)}(y,y)_{vu}(E) = b_{vu}(E,G(y_u)) + a_{vu}(E,y_u).
     \end{align*}
Regarding this one concludes that the solution of~\eqref{eq1} has the structure 
  \begin{align}
  y_{t} &= S(t) \xi + \aI \Xi^{(y)}(y,z)_{t}\nonumber\\
  & = S(t) \xi + \lim\limits_{|\aP([0,t])|\to 0}\sum\limits_{[u,v]\in\aP([0,t])} S(t-v) [\omega^{S}_{vu}(G(y_{u})) + z_{vu} (DG(y_{u}))  ]\\
  z_{ts}(E)&= (\hat\delta \aI \Xi^{(z)}(y,y))_{ts}(E) - \omega^S_{ts}(E y_s) \nonumber \\
  &=\lim\limits_{|\aP([s,t])| \to 0 }\sum\limits_{[u,v]\in\aP([s,t])} S(t-v) [b_{vu}(E,G(y_{u})) + a_{vu}(E,y_{u})] -\omega^{S}_{ts}(Ey_{s}) .
  \end{align}
  One can show that~\eqref{eq1} has a unique local-in-time solution in a suitable function space which incorporates the algebraic and analytic properties of the pair $(y,z)$. For further details see~\cite[Section~6]{HesseNeamtu}. For a better comprehension we briefly indicate this framework and point out the local-in-time existence result.\\
  
  For a pair $(y,z)$ where $(y_t)_{t \in \left[0,T\right]}$ is a $W$-valued path and the area term $(z_{ts})_{(t,s)\in \Delta_T}$,
  $ z_{ts} \in \aL(\aL(W \otimes V,W),W)
  $, we consider the function space
  \begin{align*}
  X_{\omega,T}:=\Big\{(y,z): & ~y \in C^{\beta,\beta}\left([0,T],W\right), \\
  &~ z \in C^{\alpha}\left(\Delta_T,\aL(\aL(W \otimes V,W),W) \right) 
  \cap C^{\alpha+\beta,\beta}\left(\Delta_T,\aL(\aL(W \otimes V,W),W) \right),\\
  &(\hat\delta_2 z)_{t \tau s} = \omega^S_{t\tau}(\cdot (\delta y)_{\tau s}) \Big\},
  \end{align*}
  endowed with norm
  \begin{align}\label{norm1}
  \left\|(y,z) \right\|_{X} := \left\|y \right\|_\infty + \ltn y\rtn_{\beta,\beta} 
  + \sup\limits_{0 \leq s ,  t \leq T} \frac{\left|z_{ts} \right|}{(t-s)^\alpha}
  +\sup\limits_{0 < s ,  t \leq T} s^\beta \frac{\left|z_{ts} \right|}{(t-s)^{\alpha+\beta}}.
  \end{align}
Since we also have to incorporate the initial condition $\xi$ we introduce the mapping
  \begin{align*}
  \aM_{T,\omega,\xi} \colon X_{\omega,T} \to X_{\omega,T} \quad \quad \quad
  \aM_{T,\omega,\xi}(y,z) = (\yt,\zt),
  \end{align*} 
  with
  \begin{align*}
  \yt_t &:= S(t) \xi + \aI\Xi^{(y)}(y,z)_t, \\
  \zt_{ts}(E) &:= \big(\hat\delta\aI\Xi^{(z)}(y,\yt) \big)_{ts}(E)- \omega_{ts}^S(E\yt_s),
  \end{align*}
  where
  \begin{align*}
  \Xi^{(y)}_{vu}=\Xi^{(y)}(y,z)_{vu}
  &:= \omega^S_{vu}(G(y_u)) + z_{vu}(DG(y_u)), \\
  \Xi^{(z)}_{vu}(E) = \Xi^{(z)}(y,\widetilde{y})_{vu}(E) 
  &:= b_{vu}(E,G(y_u)) + a_{vu}(E,\widetilde{y}_u).
  \end{align*}
 The reason why we choose to work with these modified H\"older spaces is given by the fact that the analytic $C_{0}$-semigroup $(S(t))_{t\in[0,T]}$ is not H\"older-continuous in $0$ but belongs to $C^{\beta,\beta}$, recall Lemma~\ref{betabeta}.\\


In this setting one has the following existence result for~\eqref{eq1}.
\begin{theorem}(Theorem~6.9~\cite{HesseNeamtu})\label{theorem_localsolution}
Let $\frac{1}{3} < \alpha \leq \frac{1}{2}$ and $0 < \beta < \alpha$ with $\alpha+2\beta>1$.
Furthermore, choose $r>1$ with $|\xi|\leq r$. 
Then there exist a $T=T(\omega,r)>0$ such that the mapping
$\aM_{T,\omega,\xi}$ has a unique fixed-point $(y,z)\in X_{\omega,T}$. Moreover, this satisfies the estimates
\begin{align}
\left\|y_{T}\right\|_{D_{\beta}} 
& \leq C \left(r~T^{-\beta}+\left\|(y,z) \right\|_{X}^2 T^{\alpha-\beta}\right),
\label{estimate_y_beta} \\ 
\left\|\aM_T(y,z)\right\|_{X} 
&\leq C \left( r + \left(1+\left\|(y,z) \right\|_X^2\right)T^\alpha \right)  \label{estimate_aM}.
\end{align}
If additionally $\xi \in D_\beta$, then $y \in C^\beta\left([0,T],W\right)$ and $z \in C^{\alpha+\beta}\left(\Delta_T,\aL(\aL(W \otimes V,W),W) \right)$.
\end{theorem}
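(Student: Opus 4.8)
The plan is to prove Theorem~\ref{theorem_localsolution} via a contraction mapping argument on the ball $B_R := \{(y,z) \in X_{\omega,T} : \|(y,z)\|_X \leq R\}$ for a suitable radius $R$ depending on $r$, with $T = T(\omega,r)$ chosen small. First I would establish that $\aM_{T,\omega,\xi}$ maps $B_R$ into itself. The key estimate is~\eqref{estimate_aM}, which one obtains by splitting the norm~\eqref{norm1} into its four pieces. For the $\yt$-component, $S(\cdot)\xi$ is handled by Lemma~\ref{betabeta}, giving the contribution $C|\xi| \leq Cr$, and the remainder $\aI\Xi^{(y)}(y,z)$ is controlled by the Sewing Lemma: one verifies~\eqref{assumption_Xi} and~\eqref{assumption_deltaXi1} for $\Xi^{(y)}_{vu} = \omega^S_{vu}(G(y_u)) + z_{vu}(DG(y_u))$ using the analytic estimates~\eqref{estimate_omegaS}, the definition of the $C^\alpha$- and $C^{\alpha+\beta,\beta}$-norms of $z$, the boundedness of $G$ and $DG$, and the algebraic identity $(\hat\delta_2 z)_{t\tau s} = \omega^S_{t\tau}(\cdot(\delta y)_{\tau s})$ to compute $\hat\delta_2 \Xi^{(y)}$; the resulting bound carries a factor $T^{\rho-\alpha}$ or similar, hence a positive power of $T$, together with $(1 + \|(y,z)\|_X^2)$, the quadratic term stemming from the product structure $z \cdot DG(y)$ and $b(E,G(y))$. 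For the $\zt$-component one argues analogously with $\Xi^{(z)}_{vu}(E) = b_{vu}(E,G(y_u)) + a_{vu}(E,\yt_u)$, invoking~\eqref{estimate_b}, \eqref{estimate_a}, the algebraic relations~\eqref{algebraic_b} and~\eqref{algebraic_a}, and the Sewing Lemma again, plus~\eqref{estimate_omegaS} for the subtracted term $\omega^S_{ts}(E\yt_s)$; the $s^\beta$-weighted part of the norm is where the $C^{\alpha+\beta,\beta}$-scale and Corollary~\ref{corollary_sewing_epsestimate_adapted} (with $\eps$ related to $\beta$) enter.

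Next I would prove the contraction property: for $(y^1,z^1),(y^2,z^2) \in B_R$ one estimates $\|\aM_{T,\omega,\xi}(y^1,z^1) - \aM_{T,\omega,\xi}(y^2,z^2)\|_X \leq C(R)\,T^{\theta}\,\|(y^1,z^1)-(y^2,z^2)\|_X$ for some $\theta > 0$. This uses the same decompositions, now exploiting that $G, DG, D^2G$ are Lipschitz (from assumption~(\textbf{G}): three times Fréchet differentiable with bounded derivatives) to bound differences like $G(y^1_u) - G(y^2_u)$ and $DG(y^1_u) - DG(y^2_u)$ by $\|y^1-y^2\|_\infty$, and the bilinearity of $\omega^S$, $a$, $b$, $c$ in their arguments to split $z^1 DG(y^1) - z^2 DG(y^2)$ etc. into controllable pieces. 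Choosing first $R$ (depending on $r$) large enough for the self-mapping constant to satisfy $C(r + R^2 T^\alpha) \leq R$ with $T$ small, and then shrinking $T = T(\omega,r)$ further so that $C(R)T^\theta < 1$, Banach's fixed-point theorem yields the unique $(y,z) \in B_R$; uniqueness in all of $X_{\omega,T}$ follows from a standard argument comparing any two solutions on a possibly smaller interval and iterating.

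Finally I would derive the two displayed a priori bounds. Estimate~\eqref{estimate_aM} is just the self-mapping estimate applied to the fixed point $(y,z) = \aM_T(y,z)$. For~\eqref{estimate_y_beta}, the $D_\beta$-regularity of $y_T$, one writes $y_T = S(T)\xi + \aI\Xi^{(y)}(y,z)_T$; the first term lies in $D_\beta$ with $\|S(T)\xi\|_{D_\beta} \leq C r T^{-\beta}$ by~\eqref{hg1}, while for the rough-integral remainder one applies Corollary~\ref{corollary_sewing_epsestimate_adapted} with $\eps = \beta$, which requires the bound $|S(v-u)\Xi^{(y)}_{vu}|_{D_\beta} \leq c_1'(v-u)^{\alpha'}$. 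This is where assumption~(\textbf{G}), namely $G : W \to \aL(V,D_\beta)$ and the extension Lemma~\ref{omega_s_beta} together with the smoothing~\eqref{hg1} applied to $S(t-v)$, is used to gain the needed $D_\beta$-regularity, producing the quadratic term $\|(y,z)\|_X^2 T^{\alpha-\beta}$. The last assertion, that $\xi \in D_\beta$ upgrades $y$ to $C^\beta$ and $z$ to $C^{\alpha+\beta}$, follows by rerunning the same fixed-point argument in the unweighted spaces $C^\beta([0,T],W)$ and $C^{\alpha+\beta}(\Delta_T,\cdot)$, which is possible precisely because $S(\cdot)\xi \in C^\beta$ when $\xi \in D_\beta$ by~\eqref{hg2}, removing the need for the $s^\beta$-weights.

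\medskip

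The main obstacle is bookkeeping the quadratic dependence on $\|(y,z)\|_X$ while simultaneously extracting a strictly positive power of $T$ in every term, in particular in the $s^\beta$-weighted seminorms of $\zt$: one must verify that the Sewing Lemma's output~\eqref{property_deltaIXiest}–\eqref{property_IXiest1} and Corollary~\ref{corollary_sewing_epsestimate_adapted} interact correctly with the $C^{\alpha+\beta,\beta}$-scale, i.e.\ that the exponent $\rho$ governing $\hat\delta_2 \Xi$ exceeds $1$ by enough margin (this is exactly where $\alpha + 2\beta > 1$ is consumed) so that $\rho - \eps$ still beats $\alpha + \beta$ at the singular endpoint. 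Getting the precise chain of Hölder exponents to close — rather than the individual estimates, which are all quoted above — is the delicate point, and it is essentially the reason the modified spaces $C^{\beta,\beta}$ and $C^{\alpha+\beta,\beta}$ were introduced.
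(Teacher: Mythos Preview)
Your outline is sound and follows the standard fixed-point scheme that the result requires: self-mapping of $\aM_{T,\omega,\xi}$ on a ball via~\eqref{estimate_aM}, contraction from the Lipschitz properties of $G,DG,D^2G$, then~\eqref{estimate_y_beta} via Corollary~\ref{corollary_sewing_epsestimate_adapted} with $\eps=\beta$, and finally the unweighted upgrade when $\xi\in D_\beta$. There is no genuine gap.

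The only thing to flag is that the paper does not actually prove this theorem at all: its ``proof'' consists of a one-line reference to Theorem~6.13 and Section~6 of~\cite{HesseNeamtu}, where the full argument lives. So you are not reproducing the paper's proof but rather reconstructing the argument of the cited source. Your reconstruction matches what one expects that argument to be (Sewing Lemma plus the analytic/algebraic estimates of Lemmas~\ref{supporting_analytic}--\ref{supporting_algebraic}), and in particular your identification of $\alpha+2\beta>1$ as the condition making $\rho>1$ in~\eqref{assumption_deltaXi1} is exactly the point. If anything, your sketch is more informative than what the present paper supplies.
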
	
\begin{proof}
The uniqueness of a local solution for~\eqref{eq1} follows by Theorem 6.13~in~\cite{HesseNeamtu}. For further details consult~\cite[Section~6]{HesseNeamtu}.
\qed 
\end{proof}\\
Furthermore we provide two results regarding the fixed-points of $\mathcal{M}_{T,\omega,\xi}$ which can be obtained by a straightforward computation. 
\begin{remark}\label{remark_localsolution_cut}
	If $(y,z)$ is a fixed-point of $\aM_{T,\omega ,\xi}$ than for any $\tilde{T}< T$ the restriction of $(y,z)$ on $[0,\tilde{T}] \times \Delta_{\tilde{T}}$ is a fixed-point of $\aM_{\tilde{T},\omega ,\xi}$. 
\end{remark}
We point out the following essential facts for the random dynamical systems theory, see Section~\ref{rds}.
\begin{remark}
	Note that if $(\omega,\omegaa)$ is an $\alpha$-H\"older rough path, then the shift $(\theta_{\tau}\omega,\widetilde{\theta}_{\tau}\omegaa)$ is again an $\alpha$-H\"older rough path. The proof is conducted in Lemma~\ref{shift:rp}. From this fact one can infer that the $\widetilde{\theta}_\tau$-shift of all the supporting processes accordingly depends on $\theta_{\tau}\omega$ respectively $\widetilde{\theta}_{\tau}\omegaa$.
\end{remark}

Keeping this in mind we state:
\begin{lemma}\label{lemma_solution_shift} Let $T>0$ and $(y,z) \in X_{\omega,T}$ be a fixed-point of $\aM_{T,\omega ,\xi}$. Then for any $\tau \in [0,T)$ there exists a fixed-point of $\aM_{T-\tau,\theta_{\tau}\omega ,y_{\tau}}$ given by
$(\widetilde{\theta}_{\tau}y, \widetilde{\theta}_{\tau}z)$.
\end{lemma}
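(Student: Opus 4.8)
The plan is to verify directly that the shifted pair $(\widetilde{\theta}_{\tau}y, \widetilde{\theta}_{\tau}z)$ is a fixed-point of $\aM_{T-\tau,\theta_{\tau}\omega,y_{\tau}}$, i.e.\ that $\aM_{T-\tau,\theta_{\tau}\omega,y_{\tau}}(\widetilde{\theta}_{\tau}y,\widetilde{\theta}_{\tau}z) = (\widetilde{\theta}_{\tau}y,\widetilde{\theta}_{\tau}z)$. First I would confirm that the shifted pair actually lies in the right space $X_{\theta_{\tau}\omega,T-\tau}$: the H\"older-type norms of $\widetilde{\theta}_{\tau}y$ and $\widetilde{\theta}_{\tau}z$ on $[0,T-\tau]$ are controlled by those of $(y,z)$ on $[0,T]$ (the weights $s^{\beta}$ only improve under the shift since $s \leq s+\tau$), and the structural constraint $(\hat\delta_2 \widetilde{\theta}_{\tau}z)_{t\tau's} = (\theta_{\tau}\omega)^S_{t\tau'}(\,\cdot\,(\delta\widetilde{\theta}_{\tau}y)_{\tau's})$ follows from the corresponding identity for $(y,z)$ together with the observation that the supporting process $\omega^S$ built from $\theta_{\tau}\omega$ is exactly the $\widetilde{\theta}_{\tau}$-shift of the one built from $\omega$ (as noted in the remark preceding the lemma).

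The heart of the argument is then a shift/concatenation computation on the two defining formulas. For the $y$-component, I would write
\begin{align*}
S(t)y_{\tau} + \aI\Xi^{(y)}(\widetilde{\theta}_{\tau}y,\widetilde{\theta}_{\tau}z)_t \stackrel{?}{=} y_{t+\tau},
\end{align*}
and use the fixed-point identity for $y$ at time $t+\tau$, namely $y_{t+\tau} = S(t)y_{\tau} + (\hat\delta \aI\Xi^{(y)}(y,z))_{t+\tau,\tau}$ (this is precisely $\hat\delta$ applied across the intermediate point $\tau$, combined with $y_{t+\tau} = S(t+\tau)\xi + \aI\Xi^{(y)}(y,z)_{t+\tau}$ and $y_\tau = S(\tau)\xi + \aI\Xi^{(y)}(y,z)_\tau$). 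The key point is that the approximation term transforms correctly under the shift: since $\omega^S$, $a$, $b$ built from $\theta_\tau\omega$ coincide with the $\widetilde\theta_\tau$-shifts of those built from $\omega$, one gets $\Xi^{(y)}(\widetilde{\theta}_{\tau}y,\widetilde{\theta}_{\tau}z)_{vu} = \widetilde{\theta}_{\tau}\Xi^{(y)}(y,z)_{vu} = \Xi^{(y)}(y,z)_{v+\tau,u+\tau}$. Then the Shift Property (Lemma~\ref{lemma_sewing_shift}) gives $(\hat\delta\aI\widetilde{\theta}_{\tau}\Xi^{(y)}(y,z))_{ts} = (\hat\delta\aI\Xi^{(y)}(y,z))_{t+\tau,s+\tau}$, and taking $s=0$ yields exactly $(\hat\delta\aI\Xi^{(y)}(\widetilde\theta_\tau y,\widetilde\theta_\tau z))_{t0} = (\hat\delta\aI\Xi^{(y)}(y,z))_{t+\tau,\tau}$, closing the identity.

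The $z$-component is handled analogously: one expands $\widetilde{\theta}_{\tau}z_{ts}(E) = z_{t+\tau,s+\tau}(E)$ using its fixed-point formula, writes $z_{t+\tau,s+\tau}(E) = (\hat\delta\aI\Xi^{(z)}(y,y))_{t+\tau,s+\tau}(E) - \omega^S_{t+\tau,s+\tau}(Ey_{s+\tau})$, and matches each piece with the shifted version, again invoking Lemma~\ref{lemma_sewing_shift} for the $\aI\Xi^{(z)}$ term and the shift-covariance of $\omega^S$ for the remainder term; here one must be slightly careful because $\Xi^{(z)}$ depends on $\widetilde y$ (the output $y$-component), so one should apply the $y$-part of the lemma first and then feed its conclusion into the $z$-part. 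I expect the main obstacle, or at least the part requiring the most care, to be the bookkeeping: keeping straight which objects are shifted with $\widetilde\theta_\tau$ versus which noise ($\theta_\tau\omega$, $\widetilde\theta_\tau\omegaa$) they are constructed from, and ensuring that the sewing map $\aI$ applied to a shifted approximation term really equals the shift of $\aI$ applied to the original — but this last point is exactly the content of Lemma~\ref{lemma_sewing_shift}, so once the supporting processes are shown to be shift-covariant (which follows from their integral-by-parts formulas and the definition of $\widetilde\theta_\tau$), everything reduces to routine substitution.
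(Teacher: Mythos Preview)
Your proposal is correct and follows essentially the same route as the paper's proof: both arguments split the fixed-point identity at time $\tau$ via $\hat\delta$, use the shift-covariance of the supporting processes $\omega^S$, $a$, $b$ (so that the approximation terms satisfy $\Xi^{(y/z)}_{\theta_\tau\omega}(\widetilde\theta_\tau y,\widetilde\theta_\tau z) = \widetilde\theta_\tau\Xi^{(y/z)}_{\omega}(y,z)$), and then invoke Lemma~\ref{lemma_sewing_shift} to pass the shift through the sewing map. Your proposal is in fact slightly more thorough than the paper's, since you explicitly check membership of the shifted pair in $X_{\theta_\tau\omega,T-\tau}$ and note the correct logical order (establish the $y$-identity first so that the second argument of $\Xi^{(z)}(y,\widetilde y)$ is available for the $z$-identity); the paper simply writes $\Xi^{(z)}(y,y)$ from the outset, which is legitimate because $\widetilde y = y$ at a fixed-point.
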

\begin{proof}
The proof follows the lines of Lemma~6.11~in~\cite{HesseNeamtu}. We use the notation $\Xi^{(y/z)}_{\omega}$ and $\Xi^{(y/z)}_{{\theta}_{\cdot}\omega}$ in order to emphasize the shifts with respect to $\omega$. By standard computations we get
\begin{align*}
\widetilde{\theta}_{\tau}y_t &= y_{t+\tau} = S(t+\tau) \xi + \aI\Xi_{\omega}^{(y)}(y,z)_{t+\tau} \\
&= S(t) y_{\tau} + (\hat\delta \aI\Xi_\omega^{(y)}(y,z))_{t+\tau,\tau} \\
&= S(t) y_{\tau} +  \aI\Xi_{\theta_{\tau}\omega}^{(y)}(\widetilde{\theta}_{\tau}y,\widetilde{\theta}_{\tau}z)_{t}.
\end{align*}
Furthermore,
\begin{align*}
\widetilde{\theta}_{\tau}z_{ts}(E) =z_{t+\tau,s+\tau}(E)
&= (\hat\delta \aI \Xi_{\omega}^{(z)}(y,y))_{t+\tau,s+\tau} (E) - \omega^S_{t+\tau,s+\tau}(E y_{s+\tau})\\
&= (\hat\delta \aI \Xi_{{\theta}_{\tau}\omega}^{(z)}(\widetilde{\theta}_{\tau}y,\widetilde{\theta}_{\tau}y))_{ts} (E) - \widetilde{\theta}_{\tau}\omega^S_{ts}(E \widetilde{\theta}_{\tau}y_{s}).
\end{align*}	
	\qed
	\end{proof}\\
The deliberations conducted in Section~\ref{sect:global:solution}
improve these results.
\section{Construction of the global-in-time solution}\label{sect:global:solution}

As recalled in the previous section, working with~\eqref{norm1} leads to quadratic estimates for the norm of $(y,z)$ in $X_{\omega,T}$. From this approach it is not clear how/if one can extend the unique local solution on an arbitrary time horizon. Therefore we need different arguments for the global-in-time existence. To this aim, similar to the finite-dimensional case, see~\cite[Section~8.5]{FritzHairer}, it is convenient to work with the norm of certain remainder terms, which is common in the rough paths theory.
 
\begin{definition}
 Let $(y,z) \in X_{\omega,T}$.
  Then we define the remainders
\begin{align*}
R^Y_{ts}&:=(\hat\delta y)_{ts} - \omega^S_{ts}(G(y_s)), \\
R^Z_{ts}(E) &:= z_{ts}(E) - b_{ts}(E,G(y_s)).
\end{align*}
\end{definition}
\begin{remark}
If $S=\mbox{Id}$ and $(y,z)$ is a fixed-point of $\aM$, then the previous terms read as
\begin{align*}
R^{Y}_{ts} =(\delta y)_{ts} - G(y_{s})(\delta\omega)_{ts},
\end{align*}
respectively
\begin{align*}
R^{Z}_{ts}(E)= E\int\limits_{s}^{t} R^{Y}_{rs}d\omega_{r}.
\end{align*}
The expression for the remainder $R^{Y}$ is the same as the one in the finite-dimensional case, compare~\cite[Section~8.5]{FritzHairer}. In contrast to the finite-dimensional setting, $R^{Z}$ is required here to estimate the quadratic terms appearing in~\eqref{estimate_aM}.
\end{remark}

\begin{definition}
Let $(y,z)\in X_{\omega,T}$ such that  
\begin{align}\label{fi}
\Phi_T(y,z) := \left\|y \right\|_{\infty,D_{2\beta},T} + \left\| R^Y \right\|_{2\beta,T} + \left\|R^Z \right\|_{\alpha+2\beta,T}<\infty,
\end{align}
where
\begin{align*}
\left\| R^Y \right\|_{2\beta,T} &:= \sup\limits_{0\leq s<t\leq T} \frac{\left|R^Y_{ts} \right|}{(t-s)^{2\beta}}, \\
\left\| R^Z \right\|_{\alpha + 2\beta,T} &:= \sup\limits_{0\leq s<t\leq T} \sup\limits_{\left|E \right|\leq 1} 
		\frac{\left|R^Z_{ts}(E) \right|}{(t-s)^{\alpha + 2\beta}}.\\
\end{align*}
The space of all pairs $(y,z)$ satisfying~\eqref{fi} is denoted by $\widetilde{X}_{\omega,T}$.
\end{definition}
This leads to the next result.
\begin{lemma}
Let $(y,z) \in \widetilde{X}_{\omega,T}$. Then we obtain the following estimates
\begin{align}
\ltn y \rtn_{\beta,T} &\leq C \left(T^{\alpha-\beta} + T^\beta \Phi_T(y,z) \right), \label{estimate_y_AT} \\
\left\|z \right\|_{\alpha+\beta,T} &\leq C \left(T^{\alpha-\beta} + T^\beta \Phi_T(y,z) \right). \label{estimate_z_AT}
\end{align}
\end{lemma}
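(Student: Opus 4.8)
The strategy is to express the relevant quantities in terms of the remainders $R^Y$, $R^Z$ and the supporting processes $\omega^S$, $b$, using the very definitions of these remainders, and then to bound each piece with the analytic estimates of Lemma~\ref{supporting_analytic} together with Lemma~\ref{omega_s_beta} and the semigroup estimates \eqref{hg1}, \eqref{hg2}. Throughout, $|G|$ and $|G|_{\mathcal{L}(V,D_\beta)}$ are bounded by assumption \textbf{(G)}, so these constants are absorbed into $C$. The common mechanism is: whenever a time increment $(t-s)^{\alpha+2\beta}$, $(t-s)^{2\beta}$ etc.\ appears, one factors out $(t-s)^\beta$ and bounds the residual power of $(t-s)$ by the corresponding power of $T$, which is how the $T^\beta$ and $T^{\alpha-\beta}$ terms in \eqref{estimate_y_AT}, \eqref{estimate_z_AT} arise.

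First I would prove \eqref{estimate_y_AT}. By definition $(\delta y)_{ts} = (\hat\delta y)_{ts} + (S(t-s)-\mathrm{Id})y_s$, and $(\hat\delta y)_{ts} = R^Y_{ts} + \omega^S_{ts}(G(y_s))$. For the first term, $|R^Y_{ts}| \le \|R^Y\|_{2\beta,T}(t-s)^{2\beta} \le \|R^Y\|_{2\beta,T}\,T^\beta (t-s)^\beta$, which contributes a $T^\beta\Phi_T(y,z)$ term after dividing by $(t-s)^\beta$ (the weight $s^\beta$ in $\ltn\cdot\rtn_{\beta,T}$ only helps). For the $\omega^S$ term, \eqref{estimate_omegaS} gives $|\omega^S_{ts}(G(y_s))| \le C\ltn\omega\rtn_\alpha |G| (t-s)^\alpha \le C(t-s)^\alpha$; dividing by $(t-s)^\beta$ leaves $(t-s)^{\alpha-\beta}\le T^{\alpha-\beta}$ (note $\alpha>\beta$). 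For the semigroup term, \eqref{hg2} with $\sigma=2\beta$, $\lambda=0$ gives $|(S(t-s)-\mathrm{Id})y_s| \le C(t-s)^{2\beta}|y_s|_{D_{2\beta}} \le C T^\beta \|y\|_{\infty,D_{2\beta},T}(t-s)^\beta$, using $2\beta\le 1$ (which follows from $\beta<\alpha\le 1/2$); this again yields a $T^\beta\Phi_T$ term. Multiplying by the weight $s^\beta\le T^\beta$ is harmless. Summing the three contributions gives \eqref{estimate_y_AT}.

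Next I would prove \eqref{estimate_z_AT}. Recall $\|z\|_{\alpha+\beta,T}$ controls $\sup|z_{t0}|$ and $\sup s^\beta |z_{ts}|/(t-s)^{\alpha+\beta}$. Write $z_{ts}(E) = R^Z_{ts}(E) + b_{ts}(E,G(y_s))$. For the remainder, $|R^Z_{ts}(E)| \le \|R^Z\|_{\alpha+2\beta,T}|E|(t-s)^{\alpha+2\beta}$, so after dividing by $(t-s)^{\alpha+\beta}$ one is left with $(t-s)^\beta \le T^\beta$, giving a $T^\beta\Phi_T$ term (and multiplying by $s^\beta\le T^\beta$ only reinforces this; for the $z_{t0}$ part one uses $(t-0)^{\alpha+2\beta}\le T^{\alpha+2\beta}$ directly). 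For $b$, estimate \eqref{estimate_b} gives $|b_{ts}(E,G(y_s))| \le C|E|\,|G|_{\mathcal{L}(V,D_\beta)}(\ltn\omega\rtn_\alpha^2+\|\omegaa\|_{2\alpha})(t-s)^{2\alpha} \le C|E|(t-s)^{2\alpha}$; dividing by $(t-s)^{\alpha+\beta}$ leaves $(t-s)^{\alpha-\beta}\le T^{\alpha-\beta}$, contributing the $T^{\alpha-\beta}$ term, and the $b_{t0}$ part is bounded by $CT^{2\alpha}\le CT^{\alpha-\beta}$ for $T$ in a bounded range (or one simply keeps it as a harmless lower-order term). Adding everything yields \eqref{estimate_z_AT}.

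The only mild subtlety — and the step I would be most careful about — is the bookkeeping of the weights $s^\beta$ and the consistent use of $\alpha>\beta$, $2\beta\le 1$, together with the fact that $\|y\|_{\infty,D_{2\beta},T}$ controls $|y_s|_{D_{2\beta}}$ (needed for the $(S(t-s)-\mathrm{Id})$ estimate) and that $\|R^Y\|_{2\beta,T}$, $\|R^Z\|_{\alpha+2\beta,T}$ are exactly the quantities appearing in $\Phi_T(y,z)$. No genuinely hard estimate is involved: everything reduces to \eqref{hg2}, \eqref{estimate_omegaS}, \eqref{estimate_b} plus elementary monotonicity of $t\mapsto t^\delta$ for $\delta\ge 0$ on $[0,T]$.
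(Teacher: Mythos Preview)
Your proof is correct and follows the same decomposition and estimates as the paper's own argument. One small clarification: in this lemma the norms $\ltn y\rtn_{\beta,T}$ and $\|z\|_{\alpha+\beta,T}$ are the \emph{unweighted} H\"older seminorms (the paper computes $\ltn y\rtn_{\beta,T}=\sup_{0\le s<t\le T}|(\delta y)_{ts}|/(t-s)^\beta$ and $\|z\|_{\alpha+\beta,T}=\sup_{0\le s<t\le T}\sup_{|E|\le 1}|z_{ts}(E)|/(t-s)^{\alpha+\beta}$), so your remarks about the weight $s^\beta$ and the separate treatment of $z_{t0}$ are unnecessary---though, as you observe, they would only help.
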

\begin{proof}
Regarding the definition of $R^{Y}$, $\hat{\delta}y$ and Lemma \ref{omega_s_beta}
\begin{align*}
\ltn y \rtn_{\beta,T} 
&= \sup\limits_{0\leq s< t \leq T} \frac{\left|(\delta y)_{ts}\right|}{(t-s)^\beta} \\
&\leq \sup\limits_{0\leq s< t \leq T} \frac{\left|R^Y_{ts}\right|}{(t-s)^\beta}
		 + \sup\limits_{0\leq s< t \leq T} \frac{\left|(S(t-s)-\mbox{Id}) y_s\right|}{(t-s)^\beta}
		 + \sup\limits_{0\leq s< t \leq T} \frac{\left|\omega^S_{ts}(G(y_s))\right|}{(t-s)^\beta} \\
&\leq \left\|R^Y \right\|_{2\beta,T} T^\beta 
		+ C \left\|y \right\|_{\infty,D_{2\beta},T} T^{\beta} 
		+ C \ltn \omega\rtn_{\alpha,T} T^{\alpha-\beta} \\
&\leq C \left(T^{\alpha-\beta} + T^\beta \Phi_T(y,z) \right),
\end{align*}
which proves the first statement. \\

Furthermore, due to~\eqref{estimate_b}, the estimates for $z$ result in
\begin{align*}
\left\|z \right\|_{\alpha+\beta,T}
&= \sup\limits_{0\leq s< t \leq T}\sup\limits_{\left| E \right|\leq 1} \frac{\left|z_{ts}(E) \right|}{(t-s)^{\alpha+\beta}} \\
&\leq \sup\limits_{0\leq s< t \leq T}\sup\limits_{\left| E \right|\leq 1} \frac{\left|R^Z_{ts}(E) \right|}{(t-s)^{\alpha+\beta}}
		 + \sup\limits_{0\leq s< t \leq T}\sup\limits_{\left| E \right|\leq 1} \frac{\left|b_{ts}(E,G(y_s)) \right|}{(t-s)^{\alpha+\beta}} \\
&\leq \left\|R^Z \right\|_{\alpha+2\beta,T} T^\beta 
		+ C\left(\ltn \omega \rtn_{\alpha}^2 + \left\| \omegaa \right\|_{2\alpha} \right) T^{\alpha-\beta} \\
&\leq C \left(T^{\alpha-\beta} + T^\beta \Phi_T(y,z) \right).
\end{align*}
\qed 
\end{proof}\\
The next result indicates the connection between the space-regularity of $y$ and of the initial \\data $\xi$. 
\begin{lemma}\label{lemma_y_2beta}
	Let $\xi \in D_{\beta}$ and $(y,z)$ be a fixed-point of $\aM_{T,\omega,\xi}$. Then for $t\in(0,T]$ we have that $y_{t}\in D_{2\beta}$.
\end{lemma}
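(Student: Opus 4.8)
The plan is to read off the spatial regularity of $y_t$ from the fixed-point identity $y_t=S(t)\xi+\aI\Xi^{(y)}(y,z)_t$. The linear part is immediate: since $\xi\in D_\beta$ and $(S(t))_{t\geq 0}$ is analytic, by \eqref{hg1} one has $\|S(t)\xi\|_{D_{2\beta}}=\|(-A)^\beta S(t)(-A)^\beta\xi\|\leq Ct^{-\beta}\|\xi\|_{D_\beta}<\infty$ for every $t\in(0,T]$, so $S(t)\xi\in D_{2\beta}$. Because $\aI\Xi^{(y)}_0=0$ we have $\aI\Xi^{(y)}(y,z)_t=(\hat\delta\aI\Xi^{(y)})_{t0}$, so it remains to prove $(\hat\delta\aI\Xi^{(y)})_{t0}\in D_{2\beta}$; for this I would invoke Corollary~\ref{corollary_sewing_epsestimate_adapted} with $\eps=2\beta$ (note $2\beta<1$ as $\beta<\alpha\leq\tfrac12$).

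The hypotheses \eqref{assumption_Xi} and \eqref{assumption_deltaXi1} of Theorem~\ref{lemma_sewing} for $\Xi^{(y)}_{vu}=\omega^S_{vu}(G(y_u))+z_{vu}(DG(y_u))$ are already established in the local solution theory recalled in Section~\ref{local:solution} (with some exponent $\rho>1$), so I only need to check the extra assumption $\|S(v-u)\Xi^{(y)}_{vu}\|_{D_{2\beta}}\leq c_1'(v-u)^{\alpha'}$. For the first summand, assumption (\textbf{G}) gives $G(y_u)\in\aL(V,D_\beta)$ with a bound uniform in $u$, hence by Lemma~\ref{omega_s_beta} $\omega^S_{vu}(G(y_u))\in D_\beta$ with norm $\leq C\ltn\omega\rtn_\alpha(v-u)^\alpha$, and then \eqref{hg1} (with $\kappa=\beta$, $\eta=2\beta$) yields $\|S(v-u)\omega^S_{vu}(G(y_u))\|_{D_{2\beta}}\leq C(v-u)^{\alpha-\beta}$. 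For the second summand I use that $\xi\in D_\beta$ upgrades the time regularity of $z$ to $z\in C^{\alpha+\beta}(\Delta_T,\aL(\aL(W\otimes V,W),W))$ by Theorem~\ref{theorem_localsolution}, so $|z_{vu}(DG(y_u))|\leq C(v-u)^{\alpha+\beta}$, and \eqref{hg1} (with $\kappa=0$, $\eta=2\beta$) gives $\|S(v-u)z_{vu}(DG(y_u))\|_{D_{2\beta}}\leq C(v-u)^{\alpha+\beta-2\beta}=C(v-u)^{\alpha-\beta}$. Adding the two, the extra hypothesis holds with $\eps=2\beta$ and $\alpha'=\alpha-\beta\in(0,1)$.

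Corollary~\ref{corollary_sewing_epsestimate_adapted} then gives $\|(\hat\delta\aI\Xi^{(y)})_{ts}\|_{D_{2\beta}}\leq C\left(c_1'(t-s)^{\alpha-\beta}+c_2(t-s)^{\rho-2\beta}\right)$, which is finite for each fixed $t$ since $\rho>1>2\beta$; taking $s=0$ shows $\aI\Xi^{(y)}(y,z)_t\in D_{2\beta}$, and combined with $S(t)\xi\in D_{2\beta}$ this yields $y_t\in D_{2\beta}$ for all $t\in(0,T]$. The one step that is more than bookkeeping is the $D_{2\beta}$-estimate for $S(v-u)\Xi^{(y)}_{vu}$: the point is that $G$ maps into $\aL(V,D_\beta)$ (not merely $\aL(V,W)$) and that $z$ enjoys the improved H\"older exponent $\alpha+\beta$ when the initial datum lies in $D_\beta$, so that the smoothing estimate \eqref{hg1} can convert this extra half order of time regularity into a half order of space regularity while keeping the resulting exponent $\alpha-\beta$ positive --- which is exactly where the standing assumption $\beta<\alpha$ is needed.
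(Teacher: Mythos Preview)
Your proof is correct and follows essentially the same route as the paper: both apply Corollary~\ref{corollary_sewing_epsestimate_adapted} with $\eps=2\beta$, estimate $|S(v-u)\Xi^{(y)}_{vu}|_{D_{2\beta}}$ exactly as you do (using $G:W\to\aL(V,D_\beta)$ together with Lemma~\ref{omega_s_beta} for the $\omega^S$-part and the upgraded regularity $z\in C^{\alpha+\beta}$ for the $z$-part), and then handle $S(t)\xi$ by~\eqref{hg1}. The only difference is presentational: the paper spells out the computation of $(\hat\delta_2\Xi^{(y)})_{vmu}$ explicitly, obtaining the concrete exponent $\rho=\alpha+2\beta$ from the unweighted H\"older bounds $y\in C^\beta$, $z\in C^{\alpha+\beta}$ that come with $\xi\in D_\beta$, whereas you invoke it from the local theory; since the lemma only asserts finiteness, your softer reference suffices.
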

\begin{proof}
By Theorem~\ref{theorem_localsolution} we know that $y \in C^\beta$ and $z \in C^{\alpha+\beta}$. In order to apply Corollary~\ref{corollary_sewing_epsestimate_adapted} we have to
compute
	\begin{align*}
(\hat\delta_{2}\Xi^{(y)})_{vmu}=\Xi^{(y)}_{vu} - \Xi^{(y)}_{vm} - S(v-m) \Xi^{(y)}_{mu}.
\end{align*}

Due to~\eqref{algebraic_omegaS} we immediately obtain that
\begin{align}\label{deltay}
(\hat\delta_2\Xi^{(y)})_{vmu}
&= \omega^S_{vm}(G(y_u)-G(y_m)) + (\hat\delta_2 z)_{vmu}(DG(y_u)) + z_{vm}(DG(y_u)-DG(y_m)).
\end{align}
Therefore, we estimate
\begin{align}\label{delta2dach}
\left|(\hat{\delta}_2 \Xi^{(y)})_{vmu} \right|
&\leq  \left|\omega^S_{vm}(G(y_u)- G(y_m) + DG(y_u) (\delta y)_{mu}) \right|
+  \left| z_{vm}(DG(y_u)-DG(y_m))\right|.
\end{align}
For the first term we have applying \eqref{estimate_omegaS} that 
\begin{align*}
\left|\omega^S_{vm}(G(y_u)- G(y_m) + DG(y_u) (\delta y)_{mu}) \right|
\leq C \ltn \omega\rtn_{\alpha} (v-m)^{\alpha} \left|G(y_u) - G(y_m) + DG(y_u)(\delta y)_{mu} \right|.
\end{align*}
Furthermore,
\begin{align*}
\left|G(y_u) - G(y_m) + DG(y_u)(\delta y)_{mu} \right|
&\leq C \ltn y\rtn_{\beta,T}^2 \left(m-u \right)^{2\beta}, 
\end{align*}
and
\begin{align*}
\left| z_{vm}(DG(y_u)-DG(y_m))\right| 
\leq C \left\|z \right\|_{\alpha+\beta} \ltn y\rtn_{\beta,T} 
\left(v-u \right)^{\alpha+2\beta}.
\end{align*}
Summarizing, we obtain
\begin{align}\label{est:delta2dach}
\left|(\hat{\delta}_2 \Xi^{(y)})_{vmu} \right|
\leq C \left(1+\ltn y\rtn_{\beta,T}^2 + \left\|z\right\|_{\alpha+\beta,T}^2 \right) \left(v-u \right)^{\alpha+2\beta}.
\end{align}
On the other hand
\begin{align*}
\left|S(v-u) \Xi^{(y)}_{vu} \right|_{D_{2\beta}}
&\leq \left|S(v-u) \omega^S_{vu}(G(y_u))\right|_{D_{2\beta}} + \left|S(v-u) z_{vu}(DG(y_u))\right|_{D_{2\beta}} \\
&\leq C \left(v-u \right)^{-\beta} \left| \omega^S_{vu}(G(y_u))\right|_{D_\beta}
+ C (v-u)^{-2\beta} \left|z_{vu}(DG(y_u)) \right|.
\end{align*}
Using \eqref{estimate_omegaS2} we get
\begin{align*}
\left|S(v-u) \Xi^{(y)}_{vu} \right|_{D_{2\beta}}
&\leq C (1+\left\|z\right\|_{\alpha+\beta,T}) \left(v-u \right)^{\alpha-\beta}. 
\end{align*}
Hence, Corollary \ref{corollary_sewing_epsestimate_adapted} entails
\begin{align*}
\left|(\hat\delta \aI \Xi^{(y)})_{ts} \right|_{D_{2\beta}} 
= \left|(\hat\delta y)_{ts} \right|_{D_{2\beta}}
\leq C \left(1+\ltn y\rtn_{\beta,T}^2 + \left\|z\right\|_{\alpha+\beta,T}^2 \right)(t-s)^{\alpha-\beta},
\end{align*}
which simply yields
\begin{align}
\left|y_t \right|_{D_{2\beta}}
&\leq \left|S(t)\xi \right|_{D_{2\beta}} + \left|(\hat\delta y)_{t0} \right|_{D_{2\beta}} \nonumber\\
&\leq C t^{-\beta} \left| \xi \right|_{D_\beta}
	+ C \left(1+\ltn y\rtn_{\beta,T}^2 + \left\|z\right\|_{\alpha+\beta,T}^2 \right) t^{\alpha-\beta}.\label{y:d2beta}
\end{align}
This proves the statement. 
\qed 
\end{proof}

\begin{lemma}\label{phi:endlich}
	If $\xi\in D_{2\beta}$ and $(y,z)$ is a fixed-point of $\mathcal{M}_{T,\omega,\xi}$ then $\Phi_{T}(y,z)<\infty$.
\end{lemma}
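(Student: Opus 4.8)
The plan is to check separately that each of the three summands in $\Phi_T(y,z)$ is finite, using the fixed-point representation of $(y,z)$ together with the a-priori estimates of the Sewing Lemma (Theorem~\ref{lemma_sewing}), and exploiting the stronger hypothesis $\xi\in D_{2\beta}$ at the one place where $\xi\in D_\beta$ alone would not suffice. As a preliminary observation, since $\xi\in D_{2\beta}\subset D_\beta$, Theorem~\ref{theorem_localsolution} applies to the fixed point $(y,z)$ and yields $y\in C^\beta$, $z\in C^{\alpha+\beta}$; hence $\ltn y\rtn_{\beta,T}$ and $\|z\|_{\alpha+\beta,T}$ are finite constants, to be used freely below.

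First I would bound $\|y\|_{\infty,D_{2\beta},T}$. From $y_t=S(t)\xi+\aI\Xi^{(y)}(y,z)_t$ and $\aI\Xi^{(y)}_0=0$ one gets $y_t=S(t)\xi+(\hat\delta\aI\Xi^{(y)})_{t0}$, so $|y_t|_{D_{2\beta}}\le|S(t)\xi|_{D_{2\beta}}+|(\hat\delta\aI\Xi^{(y)})_{t0}|_{D_{2\beta}}$. Since $(-A)^{2\beta}$ commutes with $S(t)$ on $D_{2\beta}$, the first term equals $|S(t)(-A)^{2\beta}\xi|$ and, by~\eqref{hg1}, is bounded by $C|\xi|_{D_{2\beta}}$ uniformly in $t\in[0,T]$ — this is precisely where $\xi\in D_{2\beta}$ is essential — while the second term was already estimated in the proof of Lemma~\ref{lemma_y_2beta} by $C(1+\ltn y\rtn_{\beta,T}^2+\|z\|_{\alpha+\beta,T}^2)\,t^{\alpha-\beta}$, which is bounded on $[0,T]$ because $\alpha>\beta$. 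Next, for $\|R^Y\|_{2\beta,T}$, the fixed-point identity gives $(\hat\delta y)_{ts}=(\hat\delta\aI\Xi^{(y)})_{ts}$, hence
\begin{align*}
R^Y_{ts}=\big[(\hat\delta\aI\Xi^{(y)})_{ts}-\Xi^{(y)}_{ts}\big]+z_{ts}(DG(y_s))
\end{align*}
because $\Xi^{(y)}_{ts}=\omega^S_{ts}(G(y_s))+z_{ts}(DG(y_s))$. By~\eqref{est:delta2dach}, $\hat\delta_2\Xi^{(y)}$ satisfies the Sewing-Lemma hypothesis~\eqref{assumption_deltaXi1} with exponent $\rho=\alpha+2\beta>1$, so~\eqref{property_IXiest1} bounds the bracket by $C(t-s)^{\alpha+2\beta}$; the other term is bounded by $\|z\|_{\alpha+\beta,T}\|DG\|_\infty(t-s)^{\alpha+\beta}$. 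As $\alpha>\beta$, both exponents exceed $2\beta$, so $|R^Y_{ts}|\le C(t-s)^{2\beta}$ on $[0,T]$.

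It remains to bound $\|R^Z\|_{\alpha+2\beta,T}$, which is the heart of the argument. Inserting the fixed-point form $z_{ts}(E)=(\hat\delta\aI\Xi^{(z)}(y,y))_{ts}(E)-\omega^S_{ts}(Ey_s)$ and $\Xi^{(z)}_{ts}(E)=b_{ts}(E,G(y_s))+a_{ts}(E,y_s)$ yields
\begin{align*}
R^Z_{ts}(E)=\big[(\hat\delta\aI\Xi^{(z)})_{ts}(E)-\Xi^{(z)}_{ts}(E)\big]+\big[a_{ts}(E,y_s)-\omega^S_{ts}(Ey_s)\big];
\end{align*}
the second bracket is bounded by $C|E|\,\|y\|_{\infty,D_{2\beta},T}(t-s)^{\alpha+2\beta}$ by~\eqref{estimate_a2} with $\gamma=2\beta\in(0,1)$ and the bound on $y$ just obtained. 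For the first bracket one checks the Sewing-Lemma hypothesis for $\Xi^{(z)}$: expanding along a midpoint $\tau$ and using the algebraic relations~\eqref{algebraic_a}--\eqref{algebraic_b} together with the telescoping $(S(\tau-s)-\mbox{id})y_s+(y_s-y_\tau)=-(\hat\delta y)_{\tau s}$ gives
\begin{align*}
(\hat\delta_2\Xi^{(z)})_{t\tau s}(E)=-a_{t\tau}\big(E,(\hat\delta y)_{\tau s}-\omega^S_{\tau s}(G(y_s))\big)+b_{t\tau}\big(E,G(y_s)-G(y_\tau)\big)=-a_{t\tau}(E,R^Y_{\tau s})+b_{t\tau}\big(E,G(y_s)-G(y_\tau)\big),
\end{align*}
and~\eqref{estimate_a} (with $\|R^Y\|_{2\beta,T}<\infty$) and~\eqref{estimate_b} (with $G$ Lipschitz into $\aL(V,D_\beta)$ and $\ltn y\rtn_{\beta,T}<\infty$) bound this by $c_2|E|(t-s)^{\alpha+2\beta}$ for a finite constant $c_2$; since $\alpha+2\beta>1$, \eqref{property_IXiest1} then controls the first bracket by $Cc_2|E|(t-s)^{\alpha+2\beta}$. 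Taking the supremum over $|E|\le1$ gives $\|R^Z\|_{\alpha+2\beta,T}<\infty$, whence $\Phi_T(y,z)<\infty$.

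The main obstacle is precisely this last step: the algebraic bookkeeping needed to compute $\hat\delta_2\Xi^{(z)}$ from~\eqref{algebraic_a}--\eqref{algebraic_b}, to recognise the combination $(\hat\delta y)_{\tau s}-\omega^S_{\tau s}(G(y_s))=R^Y_{\tau s}$, and to confirm that every exponent produced is at least $\alpha+2\beta$. Equally important is the order of the estimates — $\|R^Z\|_{\alpha+2\beta,T}$ is controlled only after $\|R^Y\|_{2\beta,T}$ and $\|y\|_{\infty,D_{2\beta},T}$ are known to be finite — and the finiteness of the latter is exactly where the hypothesis $\xi\in D_{2\beta}$ (rather than merely $\xi\in D_\beta$) must enter: with $\xi\in D_\beta$ one would inherit the non-integrable $t^{-\beta}$ singularity at $t=0$ visible in~\eqref{y:d2beta}.
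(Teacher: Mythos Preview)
Your proof is correct and follows essentially the same route as the paper: bound $\|y\|_{\infty,D_{2\beta},T}$ via the $D_{2\beta}$-estimate already obtained in Lemma~\ref{lemma_y_2beta} (using $\xi\in D_{2\beta}$ to kill the $t^{-\beta}$ singularity), bound $\|R^Y\|_{2\beta,T}$ by writing $R^Y_{ts}=[(\hat\delta\aI\Xi^{(y)})_{ts}-\Xi^{(y)}_{ts}]+z_{ts}(DG(y_s))$ and invoking~\eqref{property_IXiest1} with~\eqref{est:delta2dach}, and then bound $\|R^Z\|_{\alpha+2\beta,T}$ by the analogous decomposition together with~\eqref{estimate_a2}. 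Your identification $(\hat\delta_2\Xi^{(z)})_{t\tau s}(E)=-a_{t\tau}(E,R^Y_{\tau s})+b_{t\tau}(E,G(y_s)-G(y_\tau))$ is exactly the expression the paper derives (written there as $a_{vm}(E,\omega^S_{mu}(G(y_u))-(\hat\delta y)_{mu})+b_{vm}(E,G(y_u)-G(y_m))$), and the order in which you establish the three bounds matches the logical dependencies.
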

\begin{proof}
	First of all, note that if $\xi\in D_{2\beta}$,~\eqref{y:d2beta} immediately entails that $\|y\|_{\infty,2\beta,T}<\infty$.\\
	We now investigate $R^{Y}$. To this aim, we verify~\eqref{assumption_Xi} using~\eqref{estimate_omegaS}. This obviously results in
	\begin{align*}
	\left| \Xi^{(y)}_{vu} \right|
	&\leq \left| \omega^S_{vu}(G(y_u))\right| + \left| z_{vu}(DG(y_u))\right| \\
	&\leq C \|\omega\|_{\alpha}  (v-u)^{\alpha}	+ C \|z\|_{\alpha}(v-u)^{\alpha}.
	\end{align*}
Now~\eqref{assumption_deltaXi1} is verified by~\eqref{est:delta2dach}. Therefore we obtain~\eqref{property_IXiest1}, namely
\begin{align*}
|(\hat\delta\aI\Xi^{(y)})_{ts}-\Xi^{(y)}_{ts} |\leq C \left(1+\ltn y\rtn_{\beta,T}^2 + \left\|z\right\|_{\alpha+\beta,T}^2 \right) (t-s)^{\alpha+\beta}.
\end{align*}
This yields 
\begin{align*}
|R^{Y}_{ts}|&= |(\hat\delta y)_{ts} - \omega^{S}_{ts}(G(y_{s}))|
\leq |(\hat\delta\aI\Xi^{(y)})_{ts}-\Xi^{(y)}_{ts} | + |z_{ts}(DG(y_{s}))|\\ 
&\leq C \left(1+\ltn y\rtn_{\beta,T}^2 + \left\|z\right\|_{\alpha+\beta,T}^2 + \left\|z\right\|_{\alpha+\beta,T} \right) (t-s)^{\alpha+\beta}.
\end{align*}
Clearly, we infer from the previous computation that $\|R^{Y}\|_{2\beta,T}<\infty$, regarding also Theorem~\ref{theorem_localsolution}.\\

We now prove that $\|R^{Z}\|_{\alpha+2\beta,T}<\infty$. We make the same deliberations as for $R^Y$. \\   
Estimates \eqref{estimate_a} and \eqref{estimate_b} entail
\begin{align*}
|\Xi^{(z)}_{vu}(E)| \leq |b_{vu}(E,G(y_u))| + |a_{vu}(E,y_u)| 
\leq C |E| (1+\left\|y \right\|_{\infty,T}) (v-u)^{\alpha}
\end{align*}
and
\begin{align*}
\left|(\hat\delta_2 \Xi^{(z)})_{vmu}(E) \right|
&= |a_{vm}(E, \omega^S_{mu}(G(y_u))-(\hat\delta y)_{mu}) + b_{vm}(E,G(y_u)-G(y_m))| \\
&\leq C \ltn \omega \rtn_{\alpha} (v-m)^{\alpha} \left| E \right| \left\|R^Y \right\|_{2\beta,T} (m-u)^{2\beta} \\
&\quad+ C \left(\ltn \omega\rtn_{\alpha}^2 + \left\|\omegaa \right\|_{2\alpha} \right) (v-m)^{2\alpha} \left| E \right| \ltn y \rtn_{\beta,T} (m-u)^{\beta} \\
&\leq C |E| (\left\|R^Y \right\|_{2\beta,T} + \ltn y \rtn_{\beta,T})    (v-u)^{\alpha+2\beta}.
\end{align*}
Thus \eqref{property_IXiest1} implies 
\begin{align}\label{estimate_xi_z}
\left|(\hat\delta \aI \Xi^{(z)})_{ts}(E) - \Xi^{(z)}_{ts}(E) \right|
\leq C |E| (\left\|R^Y \right\|_{2\beta,T} + \ltn y \rtn_{\beta,T}) (t-s)^{\alpha+2\beta}.
\end{align}
Consequently
\begin{align*}
|R^{Z}_{ts}(E)| &= |z_{ts}(E) - b_{ts}(E,G(y_s))| \\
&\leq \left|(\hat\delta \aI \Xi^{(z)})_{ts}(E) - \Xi^{(z)}_{ts}(E) \right| + |a_{ts}(E,y_s) - \omega^S(E y_s)|. 
\end{align*}
Applying \eqref{estimate_a2} yields
\begin{align*}
|R^{Z}_{ts}(E)|
&\leq C |E| (\left\|R^Y \right\|_{2\beta,T} + \ltn y \rtn_{\beta,T}) (t-s)^{\alpha+2\beta} 
 + C |E| |y_s|_{D_{2\beta}} (t-s)^{\alpha + 2\beta}.
\end{align*}
This proves that $\|R^{Z}\|_{\alpha+2\beta,T} < \infty$.  
	\qed
	\end{proof}\\

 We now derive the following a-priori estimate of the solution mapping of~\eqref{eq1}. The computations rely on similar arguments as in the previous Lemma.
\begin{lemma}\label{lemma_estimate}
Let $\xi \in D_{2\beta}$ and let $(y,z)$ be a fixed-point of $\aM_{T,\omega,\xi}$ with $0 < T\leq 1$. 
Then it holds
\begin{align}
\Phi_T(y,z) \leq C \left(\left|\xi \right|_{D_{2\beta}} + T^{\alpha-\beta} + T^{\alpha}  \Phi_T(y,z) \right).
\end{align}
\end{lemma}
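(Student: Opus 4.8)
The plan is to bound each of the three terms in $\Phi_T(y,z) = \|y\|_{\infty,D_{2\beta},T} + \|R^Y\|_{2\beta,T} + \|R^Z\|_{\alpha+2\beta,T}$ separately, carefully tracking the dependence on $|\xi|_{D_{2\beta}}$ and on powers of $T$, and crucially replacing the quadratic quantities $\ltn y\rtn_{\beta,T}^2$ and $\|z\|_{\alpha+\beta,T}^2$ that appeared in Lemma~\ref{phi:endlich} by expressions linear in $\Phi_T(y,z)$ via the estimates~\eqref{estimate_y_AT} and~\eqref{estimate_z_AT}. First, for the term $\|y\|_{\infty,D_{2\beta},T}$: running the argument of Lemma~\ref{lemma_y_2beta} but with $\xi\in D_{2\beta}$ we get $|S(t)\xi|_{D_{2\beta}}\leq C|\xi|_{D_{2\beta}}$ from~\eqref{hg1}, and for the increment part $|(\hat\delta y)_{t0}|_{D_{2\beta}}$ we reuse the Sewing-Lemma estimate from Corollary~\ref{corollary_sewing_epsestimate_adapted}, which gave a bound of the form $C(1+\ltn y\rtn_{\beta,T}^2+\|z\|_{\alpha+\beta,T}^2)t^{\alpha-\beta}$; now feeding in~\eqref{estimate_y_AT} and~\eqref{estimate_z_AT} and using $0<T\le 1$ (so $T^{2(\alpha-\beta)}\le T^{\alpha-\beta}\le 1$ and $T^{2\beta}\le T^{\beta}$) turns $(1+\ltn y\rtn_{\beta,T}^2+\|z\|_{\alpha+\beta,T}^2)$ into $C(1+T^{2\beta}\Phi_T(y,z)^2)$; one further application of Young's inequality or simply the bound $T^{2\beta}\Phi_T^2\le T^\beta\cdot(T^\beta\Phi_T^2)$ is not yet linear, so instead I absorb: since the target allows a $T^\alpha\Phi_T$ term on the right, I write $\ltn y\rtn_{\beta,T}^2\le (C(T^{\alpha-\beta}+T^\beta\Phi_T))^2\le C(T^{2(\alpha-\beta)}+T^{2\beta}\Phi_T^2)$ and bound $T^{2\beta}\Phi_T^2$ — here the key trick is that $\Phi_T$ appears with a positive power of $T$ in front, and since all three terms of $\Phi_T$ carry the same structure, one can take $T$ small or, more robustly, keep this as $T^\alpha\Phi_T$ by noting the extra $\Phi_T$ factor must be estimated too.

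Let me restructure: the cleanest route is not to linearize the square by hand but to observe that Lemma~\ref{phi:endlich} already shows $\Phi_T(y,z)<\infty$, so $\Phi_T(y,z)$ is a \emph{finite} number, and then the estimates~\eqref{estimate_y_AT}, \eqref{estimate_z_AT} give $\ltn y\rtn_{\beta,T}+\|z\|_{\alpha+\beta,T}\le C(T^{\alpha-\beta}+T^\beta\Phi_T(y,z))$. Squaring, $\ltn y\rtn_{\beta,T}^2+\|z\|_{\alpha+\beta,T}^2\le C(T^{2(\alpha-\beta)}+T^{2\beta}\Phi_T(y,z)^2)$; then wherever a quadratic term $\ltn y\rtn_{\beta,T}^2$ multiplied some power $(t-s)^{\kappa}$ appeared, I replace it and, critically, the $T^{2\beta}\Phi_T^2$ piece gets paired with the $T$-powers already present so that overall it becomes $T^{\alpha}\cdot(\text{bounded}\cdot\Phi_T)$ only if we also use that $T^\beta\Phi_T\le$ something — so in fact the honest statement, which matches the lemma, is obtained by using $T^{2\beta}\Phi_T^2 = (T^\beta\Phi_T)(T^\beta\Phi_T)$ and bounding one factor $T^\beta\Phi_T$ crudely; but that reintroduces $\Phi_T$. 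The resolution the lemma intends is surely that the quadratic terms, after multiplication by the $(t-s)$ powers appearing in~\eqref{property_IXiest1} and the $D_{2\beta}$ estimates, each contribute a factor $T^{\alpha}$ (from $T^{\alpha-\beta}\cdot T^\beta$ type bookkeeping with $T\le 1$), leaving $T^\alpha\cdot(\ltn y\rtn_{\beta,T}^2 + \|z\|_{\alpha+\beta,T}^2)/(\ltn y\rtn_{\beta,T}+\|z\|_{\alpha+\beta,T})\le T^\alpha\cdot(\ltn y\rtn_{\beta,T}+\|z\|_{\alpha+\beta,T})\le C T^\alpha(T^{\alpha-\beta}+T^\beta\Phi_T)$, and since $T^\alpha\cdot T^{\alpha-\beta}\le T^{\alpha-\beta}$ and $T^\alpha\cdot T^\beta\Phi_T\le T^\alpha\Phi_T$, this is exactly the claimed right-hand side. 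So the first step is to make precise this "one power of the quantity can be estimated by $C(T^{\alpha-\beta}+T^\beta\Phi_T)\le C$ on $T\le1$" bookkeeping.

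Concretely the steps are: (i) bound $\|y\|_{\infty,D_{2\beta},T}$ via~\eqref{y:d2beta} with $|\xi|_{D_\beta}$ replaced by $|\xi|_{D_{2\beta}}$ and the quadratic block handled as above, giving $\le C(|\xi|_{D_{2\beta}} + T^{\alpha-\beta} + T^\alpha\Phi_T(y,z))$; (ii) bound $\|R^Y\|_{2\beta,T}$ by combining the Sewing estimate $|(\hat\delta\aI\Xi^{(y)})_{ts}-\Xi^{(y)}_{ts}|\le C(1+\ltn y\rtn_{\beta,T}^2+\|z\|_{\alpha+\beta,T}^2)(t-s)^{\alpha+\beta}$ from Lemma~\ref{phi:endlich} with $|z_{ts}(DG(y_s))|\le C\|z\|_{\alpha+\beta,T}(t-s)^{\alpha+\beta}$, dividing by $(t-s)^{2\beta}$ (so we pick up $(t-s)^{\alpha-\beta}\le T^{\alpha-\beta}$), and again replacing the quadratic and linear $z$-norms through~\eqref{estimate_z_AT}, \eqref{estimate_y_AT}; (iii) bound $\|R^Z\|_{\alpha+2\beta,T}$ by combining~\eqref{estimate_xi_z} (which already gives $C|E|(\|R^Y\|_{2\beta,T}+\ltn y\rtn_{\beta,T})(t-s)^{\alpha+2\beta}$) with the $a$-remainder estimate $|a_{ts}(E,y_s)-\omega^S_{ts}(Ey_s)|\le C|E||y_s|_{D_{2\beta}}(t-s)^{\alpha+2\beta}$ from~\eqref{estimate_a2} (with $\gamma=2\beta$, legitimate since $2\beta<1$ is not assumed but $\beta<\alpha\le1/2$ gives $2\beta<1$), divide by $(t-s)^{\alpha+2\beta}$, and note $\|R^Y\|_{2\beta,T}$ was just bounded in step (ii), $\ltn y\rtn_{\beta,T}$ by~\eqref{estimate_y_AT}, and $\sup_s|y_s|_{D_{2\beta}}=\|y\|_{\infty,D_{2\beta},T}$ is part of $\Phi_T$; finally (iv) add the three bounds, collect the $T^\alpha\Phi_T(y,z)$ contributions, and use $0<T\le1$ to consolidate all leftover $T$-powers ($T^{\alpha-\beta}$, $T^\alpha$, $T^\beta$, $T^{2\alpha-\beta}$, etc.) into a single $T^{\alpha-\beta}$ (the smallest exponent present, since $\alpha-\beta<\min\{\alpha,\beta,2\beta,2\alpha-\beta\}$ may fail — in fact $\alpha-\beta<\alpha$ and, as $2\beta>1-\alpha$, one checks $\alpha-\beta<\beta$ iff $\alpha<2\beta$ which holds since $\alpha\le1/2<2\beta$ would need $\beta>1/4$; if not, keep $T^\beta$ too, but the lemma writes only $T^{\alpha-\beta}$, so presumably $\alpha-\beta\le\beta$ under the standing hypotheses, or the constant $C$ absorbs it on $T\le1$). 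The main obstacle is the bookkeeping in step (i)/(ii): turning the genuinely quadratic Sewing-Lemma output into something linear in $\Phi_T$ with a gained power $T^\alpha$, which requires carefully pairing each quadratic factor's $(t-s)$-power budget so that exactly one power $T^\alpha$ is freed while the remaining factor of $\ltn y\rtn_{\beta,T}$ or $\|z\|_{\alpha+\beta,T}$ is re-expressed via~\eqref{estimate_y_AT}–\eqref{estimate_z_AT} and the $T\le1$ bound $T^{\alpha-\beta}+T^\beta\Phi_T\le C(1+\Phi_T)$ is \emph{not} used (that would be quadratic again) but rather the sharper $\le C$ is illegitimate — so in fact one must keep $\ltn y\rtn_{\beta,T}^2\le\ltn y\rtn_{\beta,T}\cdot C(T^{\alpha-\beta}+T^\beta\Phi_T)$ and only then use $T^\alpha\ltn y\rtn_{\beta,T}\le CT^\alpha(T^{\alpha-\beta}+T^\beta\Phi_T)\le C(T^{\alpha-\beta}+T^\alpha\Phi_T)$ with no quadratic residue. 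Getting this chain exactly right is the crux; the rest is routine substitution using the lemmas already established.
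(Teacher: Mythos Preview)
Your overall plan --- bound each of the three pieces of $\Phi_T$ separately, use Corollary~\ref{corollary_sewing_epsestimate_adapted} for the $D_{2\beta}$-norm, use~\eqref{estimate_a2} with $\gamma=2\beta$ for the $a-\omega^S$ part of $R^Z$, and feed~\eqref{estimate_y_AT}--\eqref{estimate_z_AT} in at the end --- matches the paper. Your treatment of $\|R^Z\|_{\alpha+2\beta,T}$ in step~(iii) is essentially the paper's argument. The gap is in steps~(i) and~(ii), exactly at the point you yourself flag as ``the crux''.

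You try to reuse the quadratic bound~\eqref{est:delta2dach}, namely $|(\hat\delta_2\Xi^{(y)})_{vmu}|\le C(1+\ltn y\rtn_{\beta,T}^2+\|z\|_{\alpha+\beta,T}^2)(v-u)^{\alpha+2\beta}$, and then linearize it \emph{a posteriori} by substituting~\eqref{estimate_y_AT}--\eqref{estimate_z_AT}. But substituting $\ltn y\rtn_{\beta,T}\le C(T^{\alpha-\beta}+T^\beta\Phi_T)$ twice into $\ltn y\rtn_{\beta,T}^2$ leaves a term $CT^{2\beta}\Phi_T^2$, and after multiplication by the $T^\alpha$ coming from the Sewing Lemma you still have $CT^{\alpha+2\beta}\Phi_T^2$. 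None of the algebraic manipulations you sketch (factoring one copy of $\ltn y\rtn_{\beta,T}$, bounding $T^\alpha\ltn y\rtn_{\beta,T}$, etc.) removes this quadratic residue; each chain, when written out, reproduces a product of two factors each comparable to $\Phi_T$. A bound of the form $\Phi_T\le C(|\xi|_{D_{2\beta}}+T^{\alpha-\beta}+T^\alpha\Phi_T+T^{\alpha+2\beta}\Phi_T^2)$ is strictly weaker than the lemma and, more importantly, would break the iteration in Lemma~\ref{lemma_apriori_estimate}, where the choice of $T_0^*$ must be independent of $r$.

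The paper does \emph{not} linearize after the fact; it re-derives the estimate on $(\hat\delta_2\Xi^{(y)})_{vmu}$ so that it is linear in $\Phi_T$ from the outset. The two key decompositions you are missing are
\[
(\delta y)_{mu}=R^Y_{mu}+(S(m-u)-\mbox{Id})y_u+\omega^S_{mu}(G(y_u)),
\qquad
z_{vm}=R^Z_{vm}+b_{vm}(\cdot,G(y_m)).
\]
In the Taylor remainder $G(y_u)-G(y_m)+DG(y_u)(\delta y)_{mu}=\int_0^1[DG(y_u+q(\delta y)_{mu})-DG(y_u)]\,dq\,(\delta y)_{mu}$, one bounds the integral by the constant $2\|DG\|_\infty$ when paired with the first two pieces of $(\delta y)_{mu}$ (which already carry $(m-u)^{2\beta}$ via $\|R^Y\|_{2\beta,T}$ and $\|y\|_{\infty,D_{2\beta},T}$), and by $C\ltn y\rtn_{\beta,T}(m-u)^\beta$ only when paired with the $\omega^S$-piece. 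Likewise, in $z_{vm}(DG(y_u)-DG(y_m))$ one bounds $DG(y_u)-DG(y_m)$ by a constant when paired with $R^Z_{vm}$ and by $C\ltn y\rtn_{\beta,T}(m-u)^\beta$ when paired with $b_{vm}$. This gives directly
\[
\left|(\hat\delta_2\Xi^{(y)})_{vmu}\right|\le C\big(1+\Phi_T(y,z)\big)(v-u)^{\alpha+2\beta},
\]
which is the linear input the Sewing Lemma needs; the rest of your outline then goes through.
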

\begin{proof}
Recall that $\Phi_T(y,z)= \left\|y \right\|_{\infty,D_{2\beta},T} + \left\|R^Y \right\|_{2\beta,T} + \left\|R^Z \right\|_{\alpha+2\beta,T}$. We begin with $\left\|R^Y \right\|_{2\beta,T}$.  
and further use that
\begin{align*}
\left|G(y_u) - G(y_m) + DG(y_u)(\delta y)_{mu} \right|
&= \Bigg|\int\limits_{0}^{1} \left[DG(y_u + q (\delta y)_{mu}) - DG(y_u)\right]  dq~ (\delta y)_{mu} \Bigg| \\
&\leq \int\limits_{0}^{1} \left|DG(y_u + q (\delta y)_{mu}) - DG(y_u)\right| dq \\
&\quad\quad\cdot \left[\left|R^Y_{mu}\right| + \left|(S(m-u)-\mbox{Id})y_u \right| + \left|\omega^S_{mu}(G(y_u)) \right|\right] \\[0.6ex]
&\leq C \left[\left\|R^Y \right\|_{2\beta,T} (m-u)^{2\beta} + \left\| y\right\|_{\infty,D_{2\beta},T} (m-u)^{2\beta}\right. \\
		 &\quad\quad \left.+ \ltn y\rtn_{\beta,T} \ltn \omega \rtn_{\alpha} (m-u)^{\alpha+\beta} \right].
\end{align*}
Applying \eqref{estimate_y_AT} results in
\begin{align*}
\left|G(y_u) - G(y_m) + DG(y_u)(\delta y)_{mu} \right|
\leq C \left[\Phi_T(y,z)(m-u)^{2\beta} + (T^{\alpha-\beta} + T^\beta \Phi_T(y,z)) (m-u)^{\alpha+\beta}\right].
\end{align*} 
All in all we obtain for the first term in~\eqref{delta2dach}
\begin{align*}
\left|\omega^S_{vm}(G(y_u)- G(y_m) + DG(y_u) (\delta y)_{mu}) \right|
\leq C (1+ \Phi_T(y,z)) (v-u)^{\alpha+2\beta}.
\end{align*}
For the second term in~\eqref{delta2dach} we have 
\begin{align*}
&\left|z_{vm}(DG(y_u)-DG(y_m)) \right|\\
&\leq  \left|R^Z_{vm}(DG(y_u) -DG(y_m)) \right| 
		 + \left|b_{vm}(DG(y_u)-DG(y_m),G(y_m)) \right|\\
&\leq C \left\|R^Z \right\|_{\alpha+2\beta,T} (v-m)^{\alpha+2\beta}
		 + C \left(\ltn \omega \rtn_{\alpha}^2 + \left\|\omegaa \right\|_{2\alpha} \right) (v-m)^{2\alpha} \ltn y\rtn_{\beta,T} (m-u)^{\beta}.
\end{align*}
Again, we apply \eqref{estimate_y_AT} and derive
\begin{align*}
\left|z_{vm}(DG(y_u)-DG(y_m)) \right|
&\leq C (1+\Phi_T(y,z)) (v-u)^{\alpha+2\beta}.
\end{align*}
Summarizing, we obtain
\begin{align*}
\left|(\hat{\delta}_2 \Xi^{(y)})_{vmu} \right| \leq C(1+\Phi_T(y,z)) \left(v-u \right)^{\alpha+2\beta}.
\end{align*}
Then~\eqref{property_IXiest1} yields
\begin{align*}
\left|(\hat\delta \aI \Xi^{(y)})_{ts} - \Xi^{(y)}_{ts} \right|
\leq C (1+\Phi_T(y,z)) \left(t-s \right)^{\alpha+2\beta}.
\end{align*}
Consequently,
\begin{align*}
\left| R^Y_{ts} \right| 
&= \left| (\hat\delta y)_{ts} - \omega^S_{ts}(G(y_s)) \right| \\
&\leq \left|(\hat\delta \aI \Xi^{(y)}_{ts} - \Xi^{(y)}_{ts} \right| + \left|z_{ts}(DG(y_s)) \right|\\
&\leq C (1+\Phi_T(y,z)) \left(t-s \right)^{\alpha+2\beta} + \left\|z \right\|_{\alpha+\beta,T} \left(t-s \right)^{\alpha+\beta}.
\end{align*}
Now \eqref{estimate_z_AT}  entails the first important estimate on the $2\beta$-norm of $R^{Y}$, namely
\begin{align}
\left\|R^Y \right\|_{2\beta,T} \leq C \left(T^{\alpha-\beta} + T^\alpha \Phi_T(y,z) \right) \label{estimate_RY}.
\end{align}
We now continue investigating $\left\|y\right\|_{\infty,D_{2\beta},T}$. In order to apply Corollary~\ref{corollary_sewing_epsestimate_adapted} we firstly consider
\begin{align*}
\left|S(v-u) \Xi^{(y)}_{vu} \right|_{D_{2\beta}}
&\leq \left|S(v-u) \omega^S_{vu}(G(y_u))\right|_{D_{2\beta}} + \left|S(v-u) z_{vu}(DG(y_u))\right|_{D_{2\beta}} \\
&\leq C \left(v-u \right)^{-\beta} \left| \omega^S_{vu}(G(y_u))\right|_{D_\beta}
		 + C (v-u)^{-2\beta} \left|z_{vu}(DG(y_u)) \right|.
\end{align*}
Using~\eqref{estimate_omegaS2} and~\eqref{estimate_z_AT} we get
\begin{align*}
\left|S(v-u) \Xi^{(y)}_{vu} \right|_{D_{2\beta}}
&\leq C (1+\left\|z\right\|_{\alpha+\beta,T}) \left(v-u \right)^{\alpha-\beta} \\
&\leq C(1+T^\beta \Phi_T(y,z)) (v-u)^{\alpha-\beta}.
\end{align*}
Hence, by Corollary~\ref{corollary_sewing_epsestimate_adapted} we obtain
\begin{align*}
\left|(\hat\delta \aI \Xi^{(y)})_{ts} \right|_{D_{2\beta}} 
= \left|(\hat\delta y)_{ts} \right|_{D_{2\beta}} 
&\leq C(1+T^\beta \Phi_T(y,z)) (t-s)^{\alpha-\beta} + C(1+\Phi_T(y,z))  (t-s)^{\alpha} \\ 
&\leq C(T^{\alpha-\beta}+T^\alpha \Phi_T(y,z)).
\end{align*}
Regarding this we immediately obtain
\begin{align*}
\left|y_t \right|_{D_{2\beta}} 
&\leq \left|(\hat\delta \aI \Xi^{(y)})_{t0} \right|_{D_{2\beta}} + \left|S(t)\xi\right|_{D_{2\beta}} \\
&\leq C(\left|\xi \right|_{D_{2\beta}}+ T^{\alpha-\beta} + T^\alpha \Phi_T(y,z)). 
\end{align*}
This obviously implies the second important estimate, namely
\begin{align}
\left\| y \right\|_{\infty,D_{2\beta},T}
&\leq C(\left|\xi \right|_{D_{2\beta}}+ T^{\alpha-\beta} + T^\alpha \Phi_T(y,z)). \label{estimate_y_infty_D2beta}
\end{align}
Finally, we only have to compute $\left\|R^Z \right\|_{\alpha+2\beta,T}$ analogously to the proof of Lemma~\ref{phi:endlich}. \\

Applying~\eqref{estimate_RY} and~\eqref{estimate_y_AT} to~\eqref{estimate_xi_z} entails
\begin{align*}
\left|(\hat\delta \aI \Xi^{(z)})_{ts}(E) - \Xi^{(z)}_{ts}(E) \right| 
\leq C(T^{\alpha-\beta} + T^\alpha \Phi_T(y,z) ) \left| E \right| \left(t-s \right)^{\alpha+2\beta}.
\end{align*}
Consequently,~\eqref{estimate_a2} further leads to
\begin{align*}
\left|R^Z_{ts}(E) \right| 
&\leq \left|(\hat\delta \aI \Xi^{(z)})_{ts}(E) - \Xi^{(z)}_{ts}(E) \right| 
		 + \left|a_{ts}(E,y_s)-  \omega^S_{ts}(E y_s) \right| \\
&\leq C(T^{\alpha-\beta} + T^\alpha \Phi_T(y,z) ) \left| E \right| \left(t-s \right)^{\alpha+2\beta}
		 + C \ltn \omega \rtn_{\alpha} \left\|y \right\|_{\infty,D_{2\beta,T}} \left| E \right| \left(t-s \right)^{\alpha+2\beta}.
\end{align*}
Regarding this and plugging in~\eqref{estimate_y_infty_D2beta}, we derive the third and final important estimate for the terms defining $\Phi_{T}$, namely
\begin{align}
\left\|R^Z \right\|_{\alpha+2\beta,T} 
\leq C \left (\left| \xi \right|_{D_{2\beta}} + T^{\alpha-\beta} + T^\alpha \Phi_T(y,z) \right) \label{estimate_RZ}.
\end{align}
This proves the statement, i.e.
\begin{align*}
\Phi_T(y,z) &\leq C\left(\left| \xi \right|_{D_{2\beta}} + T^{\alpha-\beta} + T^\alpha \Phi_T(y,z) \right).
\end{align*}
\qed 
\end{proof}\\
We now derive a crucial estimate which will be required for the concatenation procedure.
\begin{lemma}\label{lemma_apriori_estimate}
Let $T>0$, $r \geq 1 \vee \left|\xi\right|_{D_{2\beta}}$ and let $(y,z)$ be a fixed-point of $\aM_{T,\omega,\xi}$. Then there exists a constant $M>0$ independent of $r$, such that
\begin{align*}
\left\|y \right\|_{\infty,D_{2\beta,T}} \leq r M e^{MT}.
\end{align*} 
\end{lemma}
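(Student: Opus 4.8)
The plan is to localise on short time intervals, apply the a-priori bound of Lemma~\ref{lemma_estimate} on each of them, and then concatenate by a discrete Gronwall argument. Fix the time horizon $T$ and let $C\geq\tfrac12$ be a constant dominating the one appearing in Lemma~\ref{lemma_estimate}; enlarging it if necessary we may also assume $4C>1$. The crucial observation is that a single such $C$ works on every subinterval: for $\tau+T'\leq T$ the rough-path (semi)norms $\ltn\theta_{\tau}\omega\rtn_{\alpha}$ over $[0,T']$ and $\|\widetilde{\theta}_{\tau}\omegaa\|_{2\alpha}$ over $\Delta_{T'}$ are bounded by $\ltn\omega\rtn_{\alpha}$ over $[0,T]$, respectively $\|\omegaa\|_{2\alpha}$ over $\Delta_{T}$, so that Lemma~\ref{lemma_estimate} applied to the shifted problems on subintervals keeps the same constant. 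Choose $h=h(\bm{\omega})\in(0,1]$ so small that $Ch^{\alpha}\leq\tfrac12$, and partition $[0,T]$ as $0=t_0<t_1<\dots<t_N=T$ with $t_{k+1}-t_k\leq h$ and $N\leq T/h+1$.

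On the $k$-th block I would argue as follows. By Lemma~\ref{lemma_solution_shift} the pair $(\widetilde{\theta}_{t_k}y,\widetilde{\theta}_{t_k}z)$ is a fixed-point of $\aM_{T-t_k,\theta_{t_k}\omega,y_{t_k}}$, and by Remark~\ref{remark_localsolution_cut} its restriction to $[0,t_{k+1}-t_k]$ is a fixed-point of $\aM_{t_{k+1}-t_k,\theta_{t_k}\omega,y_{t_k}}$. Assuming inductively that $y_{t_k}\in D_{2\beta}$ — which holds for $k=0$ since $|\xi|_{D_{2\beta}}\leq r$ — Lemma~\ref{phi:endlich} gives that the corresponding $\Phi$ is finite, and then Lemma~\ref{lemma_estimate}, after absorbing the $T^{\alpha}\Phi$-term into the left-hand side (this is where $Ch^{\alpha}\leq\tfrac12$ enters) and using $t_{k+1}-t_k\leq h\leq 1$ together with $\alpha>\beta$, yields
\begin{align*}
\sup_{t_k\leq s\leq t_{k+1}}\left|y_s\right|_{D_{2\beta}}\leq\Phi_{t_{k+1}-t_k}\!\left(\widetilde{\theta}_{t_k}y,\widetilde{\theta}_{t_k}z\right)\leq 2C\left(\left|y_{t_k}\right|_{D_{2\beta}}+1\right),
\end{align*}
since $\sup_{t_k\leq s\leq t_{k+1}}|y_s|_{D_{2\beta}}=\|\widetilde{\theta}_{t_k}y\|_{\infty,D_{2\beta},t_{k+1}-t_k}$ is one of the three terms making up $\Phi_{t_{k+1}-t_k}$. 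In particular $y_{t_{k+1}}\in D_{2\beta}$, so the induction continues.

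It remains to solve the recursion. Set $A_k:=\max\{r,\sup_{0\leq s\leq t_k}|y_s|_{D_{2\beta}}\}$, so that $A_0=r\geq 1$ because $r\geq 1\vee|\xi|_{D_{2\beta}}$. Since $|y_{t_k}|_{D_{2\beta}}\leq A_k$ and $A_k\geq 1$, the block estimate above gives
\begin{align*}
A_{k+1}=\max\Big\{A_k,\ \sup_{t_k\leq s\leq t_{k+1}}|y_s|_{D_{2\beta}}\Big\}\leq\max\bigl\{A_k,\ 2C(A_k+1)\bigr\}\leq 4C\,A_k,
\end{align*}
hence $A_N\leq(4C)^N r$. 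As $N\leq T/h+1$ and $4C>1$, putting $M:=\max\{4C,\ h^{-1}\ln(4C)\}$ — a constant depending on $\bm{\omega},\alpha,\beta$ and $G$ but not on $r$ — we obtain $(4C)^N\leq 4C\,e^{(T/h)\ln(4C)}\leq M\,e^{MT}$, and therefore
\begin{align*}
\|y\|_{\infty,D_{2\beta},T}\leq A_N\leq rM\,e^{MT},
\end{align*}
which is the claim.

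The main obstacle, and the point that must be checked carefully, is the uniformity of $C$ and of the step size $h$ over all subintervals and over $r$: this is exactly why Lemma~\ref{lemma_estimate} was stated as a genuine $r$-independent a-priori estimate, and why one uses the monotonicity of the rough-path (semi)norms under restriction to subintervals, which makes the shifted problems no rougher than the original. The rest is routine bookkeeping — propagating the membership $y_{t_k}\in D_{2\beta}$ so that Lemmas~\ref{phi:endlich} and~\ref{lemma_estimate} are applicable, and summing the geometric recursion.
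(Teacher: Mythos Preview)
Your proof is correct and follows essentially the same route as the paper's: choose a step size $h$ so that $Ch^{\alpha}\leq\tfrac12$, use Lemma~\ref{lemma_solution_shift} and Remark~\ref{remark_localsolution_cut} to restrict the fixed-point to each subinterval, apply Lemma~\ref{lemma_estimate} to absorb the $\Phi$-term, and iterate the resulting recursion $(4C)^{N}r$ into an exponential bound. The only differences are cosmetic --- the paper takes a uniform partition $T_0=T/N$ with $T_0^*/2<T_0\leq T_0^*$ rather than your $N\leq T/h+1$, and you are slightly more explicit about two points the paper leaves tacit: invoking Lemma~\ref{phi:endlich} to ensure $\Phi$ is finite before absorbing, and spelling out why the constant $C$ from Lemma~\ref{lemma_estimate} is uniform over all shifted subintervals via monotonicity of the rough-path seminorms under restriction.
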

\begin{proof}
By Remark \ref{remark_localsolution_cut} we know that by restricting the solution on a smaller time interval $[0,\tilde{T}]$, with $\tilde{T}<T$, we obtain a fixed-point of $\aM_{\tilde{T},\omega,\xi}$. 
According to Lemma \ref{lemma_estimate} we have for all $0<\tilde{T}\leq 1$ that
\begin{align*}
\Phi_{\tilde{T}}(y,z) \leq C \left(\left|\xi \right|_{D_{2\beta}} + \tilde{T}^{\alpha-\beta} + \tilde{T}^{\alpha} \Phi_{\tilde{T}}(y,z) \right).
\end{align*}
We now choose $0<T_0^*\leq \tilde{T}$ sufficiently small such that $C (T_0^*)^\alpha \leq \frac{1}{2}$.
This yields for all $T_0<T_0^*$ that
\begin{align*}
\Phi_{T_{0}}(y,z) \leq 2C\left(\left|\xi \right|_{D_{2\beta}}+1 \right) \leq 4C r.
\end{align*}
Consequently, this means that
\begin{align*}
 \left\|y \right\|_{\infty,D_{2\beta}, T_0} \leq 4C r.
\end{align*}
At this point it is important to note that the choice of $T_0^*$ is independent of $r$ and $T$. \\

If $T\leq T_0^*$ the statement follows choosing $M\geq 4C$. Otherwise we can find an $N \in \IN $ (not necessarily unique), such that $\frac{T_0^*}{2}< \frac{T}{N}\leq T_0^*$. In this case we set $T_0:=\frac{T}{N}$.\\

Now, combining Lemma~\ref{lemma_estimate} and Lemma~\ref{lemma_solution_shift} we obtain for $1 \leq n \leq N-1$ that
\begin{align*}
\Phi_{T_0}(\widetilde\theta_{nT_0}y,\widetilde\theta_{nT_0}z) 
&\leq C \left(\left|y_{n T_0} \right|_{D_{2\beta}} + T_0^{\alpha-\beta} + T_0^{\alpha} \Phi_{T_0}(\widetilde\theta_{nT_0}y,\widetilde\theta_{nT_0}z) \right),
\end{align*}
Since $CT^{\alpha}_{0}\leq \frac{1}{2}$ and $y_{nT_{0}} =(\widetilde{\theta}_{(n-1)T_{0}}y)_{T_{0}}$,
the previous estimate results in
\begin{align*}
\Phi_{T_0}(\widetilde\theta_{nT_0}y,\widetilde\theta_{nT_0}z)
\leq 2C \Big(\Big\|\widetilde\theta_{(n-1)T_{0}}y \Big\|_{\infty,D_{2\beta},T_0} + 1 \Big),
\end{align*}
which yields
\begin{align*}
\Big\| \widetilde\theta_{nT_0}y \Big\|_{\infty,D_{2\beta},T_0}
&\leq \Phi_{T_0}(\widetilde\theta_{nT_0}y,\widetilde\theta_{nT_0}z)
\leq 2C \Big(\Big\|\widetilde\theta_{(n-1)T_{0}}y \Big\|_{\infty,D_{2\beta},T_0} + 1 \Big).
\end{align*}
By induction we infer that
\begin{align*}
\left\| \widetilde\theta_{nT_0}y \right\|_{\infty,D_{2\beta},T_0} 
&\leq \left(4C \right)^{n+1} r, \quad \text{for all } n = 0,\ldots,N-1.
\end{align*}
From this we finally conclude
\begin{align*}
\left\|y \right\|_{\infty,D_{2\beta},T}
= \max\limits_{n=0,\ldots,N-1} \left\| \widetilde\theta_{nT_0}y \right\|_{\infty,D_{2\beta},T_0}
\leq \left(4C \right)^{N} r 
= \left(4C \right)^{\frac{T}{T_0}} r
\leq \left(\left(4C \right)^{\frac{2}{T_0^*}}\right)^{T} r \leq M e^{MT} r.
\end{align*}
for a sufficiently large $M$.
\qed 
\end{proof}

Now we state the main step required in order to obtain a global solution. We show that by the concatenation of two local solutions we obtain a solution on a larger time interval. For similar arguments and techniques, see \cite{GarridoLuSchmalfuss2}.
\begin{lemma}\label{lemma_concatenation}
Let $(y^1,z^1)$ be a fixed-point of $\aM_{T_1,\omega,\xi}$ and $(y^2,z^2)$ be a fixed-point of $\aM_{T_2,\theta_{T_1}\omega,y^1_{T_1}}$. Then we obtain a fixed-point $(y,z)$ of $\aM_{T_1+T_2,\omega,\xi}$ via
\begin{align*}
y_t:= 
\begin{cases}
y^1_t, & 0 \leq t \leq T_1 \\
y^2_{t-T_1}, & T_1 \leq t \leq T_1+T_2,
\end{cases}
\end{align*}
and
\begin{align*}
z_{ts}(E) =
\begin{cases}
z^1_{ts}(E), & 0 \leq s \leq t \leq T_1 \\
\omega^S_{tT_1}(E (\delta y^1)_{T_1 s}) + z^2_{t-T_1,0}(E) + S(t-T_1) z^1_{T_1 s}(E), & 0 \leq s \leq T_1 \leq t \leq T_1+T_2 \\
z^2_{t-T_1,s-T_1}(E), & T_1 \leq s \leq t \leq T_1+T_2.
\end{cases}
\end{align*}
\end{lemma}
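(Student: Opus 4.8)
The plan is to check the two defining properties of a fixed point on the enlarged interval: that $(y,z)\in X_{\omega,T_1+T_2}$, and that $y_t=S(t)\xi+\aI\Xi^{(y)}(y,z)_t$ together with $z_{ts}(E)=(\hat\delta\aI\Xi^{(z)}(y,y))_{ts}(E)-\omega^S_{ts}(Ey_s)$ hold for $0\le s\le t\le T_1+T_2$. The tools are: the \emph{causality} of the approximation terms (each $\Xi^{(y)}_{vu}$, $\Xi^{(z)}_{vu}(E)$ depends only on $(y,z)$ on $[u,v]$) together with the representation of $\aI\Xi$ as a limit of Riemann-type sums over partitions of $[s,t]$ (Corollary~\ref{corollary_sewing_integral}); the shift property of the rough integral (Lemma~\ref{lemma_sewing_shift}); the shift-covariance of the supporting processes, i.e.\ $\widetilde{\theta}_\tau\omega^S,\widetilde{\theta}_\tau a,\widetilde{\theta}_\tau b,\widetilde{\theta}_\tau c$ are the processes built from $\theta_\tau\omega$, so that $\widetilde{\theta}_\tau\Xi^{(y)}(y,z)=\Xi^{(y)}_{\theta_\tau\omega}(\widetilde{\theta}_\tau y,\widetilde{\theta}_\tau z)$ and likewise for $\Xi^{(z)}$; the identity $(\hat\delta_2\hat\delta g)=0$ for every path $g$; and the algebraic relation $(\hat\delta_2\omega^S)_{t\tau s}=0$ from Lemma~\ref{supporting_algebraic}, equation~\eqref{algebraic_omegaS}.

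\emph{Membership in $X_{\omega,T_1+T_2}$.} Since $\xi\in D_{2\beta}$, Theorem~\ref{theorem_localsolution} gives $y^1_{T_1}=y_{T_1}\in D_\beta$, hence the second piece enjoys the \emph{unweighted} estimates $y^2\in C^\beta([0,T_2],W)$ and $z^2\in C^{\alpha+\beta}(\Delta_{T_2},\cdot)$. Consequently the glued $y$ lies in $C^{\beta,\beta}([0,T_1+T_2],W)$ and $z$ in $C^\alpha\cap C^{\alpha+\beta,\beta}$: the only new quantities are increments straddling $T_1$, and for $0<s<T_1<t$ one writes $y_t-y_s=(y^2_{t-T_1}-y^2_0)+(y^1_{T_1}-y^1_s)$ and bounds each summand using $t-T_1\le t-s$, $T_1-s\le t-s$ and $s^\beta\le T_1^\beta$, and similarly for $z$. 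The constraint $(\hat\delta_2 z)_{t\tau s}=\omega^S_{t\tau}(\cdot\,(\delta y)_{\tau s})$ is verified by placing $s\le\tau\le t$ relative to $T_1$: in the two extreme regions it reduces to the constraint for $z^1$, resp.\ $z^2$ (using shift-covariance in the latter); when $T_1$ lies strictly between two of the three points, the $z^1$-terms telescope against the semigroup factors, the $z^2$-terms reassemble into $(\hat\delta_2 z^2)$ (translated back by the shift), and the leftover $\omega^S$-terms are collapsed by $(\hat\delta_2\omega^S)=0$ together with $(\delta y)_{\tau T_1}+(\delta y)_{T_1 s}=(\delta y)_{\tau s}$ (or its analogue with $T_1$ in first position), which reproduces exactly $\omega^S_{t\tau}(\cdot\,(\delta y)_{\tau s})$.

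\emph{The equation for $y$, and the easy cases for $z$.} For $t\le T_1$, causality of $\Xi^{(y)}$ and Corollary~\ref{corollary_sewing_integral} give $\aI\Xi^{(y)}(y,z)_t=\aI\Xi^{(y)}(y^1,z^1)_t$, so the equation holds because $(y^1,z^1)$ is a fixed point of $\aM_{T_1,\omega,\xi}$; in particular $y_{T_1}=S(T_1)\xi+\aI\Xi^{(y)}(y,z)_{T_1}$. For $T_1\le t\le T_1+T_2$, since $\widetilde{\theta}_{T_1}y=y^2$ and $\widetilde{\theta}_{T_1}z=z^2$ on $\Delta_{T_2}$ by construction, shift-covariance gives $\widetilde{\theta}_{T_1}\Xi^{(y)}(y,z)=\Xi^{(y)}_{\theta_{T_1}\omega}(y^2,z^2)$, and Lemma~\ref{lemma_sewing_shift} yields $(\hat\delta\aI\Xi^{(y)}(y,z))_{t,T_1}=\aI\Xi^{(y)}_{\theta_{T_1}\omega}(y^2,z^2)_{t-T_1}$; combining $\aI\Xi^{(y)}(y,z)_t=S(t-T_1)\aI\Xi^{(y)}(y,z)_{T_1}+(\hat\delta\aI\Xi^{(y)}(y,z))_{t,T_1}$, the semigroup law, the identity for $y_{T_1}$, and the equation $y^2_{t-T_1}=S(t-T_1)y^1_{T_1}+\aI\Xi^{(y)}_{\theta_{T_1}\omega}(y^2,z^2)_{t-T_1}$ gives $y_t=y^2_{t-T_1}=S(t)\xi+\aI\Xi^{(y)}(y,z)_t$. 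For $z$ with $s\le t\le T_1$ the equation is that of $z^1$ by causality, and for $T_1\le s\le t$ it follows from the same shift argument, translating both $\aI\Xi^{(z)}$ and $\omega^S$ back to $\theta_{T_1}\omega$ and invoking the equation for $z^2$.

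\emph{The equation for $z$ with $0\le s\le T_1\le t$.} Here I would write $(\hat\delta\aI\Xi^{(z)}(y,y))_{ts}=(\hat\delta\aI\Xi^{(z)}(y,y))_{t,T_1}+S(t-T_1)(\hat\delta\aI\Xi^{(z)}(y,y))_{T_1,s}$, which is legitimate since $\aI\Xi^{(z)}(y,y)$ is a path, so $\hat\delta_2$ of its $\hat\delta$ vanishes. The first summand equals $\aI\Xi^{(z)}_{\theta_{T_1}\omega}(y^2,y^2)_{t-T_1}$ by the shift; by the equation for $z^2$ (whose initial datum is $y^1_{T_1}=y_{T_1}$) and shift-covariance it is $z^2_{t-T_1,0}(E)+\omega^S_{t,T_1}(Ey_{T_1})$. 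The second summand, by causality and the equation for $z^1$, equals $S(t-T_1)\bigl(z^1_{T_1,s}(E)+\omega^S_{T_1,s}(Ey_s)\bigr)$. Subtracting $\omega^S_{ts}(Ey_s)$ and using $(\hat\delta_2\omega^S)_{t,T_1,s}(Ey_s)=0$ to rewrite $\omega^S_{t,T_1}(Ey_s)-\omega^S_{ts}(Ey_s)=-S(t-T_1)\omega^S_{T_1,s}(Ey_s)$, the three $\omega^S$-terms collapse to $\omega^S_{t,T_1}(Ey_{T_1})-\omega^S_{t,T_1}(Ey_s)=\omega^S_{t,T_1}(E(\delta y^1)_{T_1 s})$, which together with the surviving $z^1$- and $z^2$-contributions is exactly the claimed middle branch of $z$. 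The main obstacle is precisely this bookkeeping --- keeping track of which path each $\omega^S$ and each semigroup factor refers to under the two shifts --- together with confirming the shift-covariance of $a,b,c$ in the exact form needed; the Hölder gluing, while tedious, is routine once one exploits the extra smoothing $y^1_{T_1}\in D_\beta$.
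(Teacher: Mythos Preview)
Your proposal is correct and follows essentially the same route as the paper: the fixed-point identities are verified by splitting the sewing integral at $T_1$ via $(\hat\delta\aI\Xi)_{ts}=(\hat\delta\aI\Xi)_{tT_1}+S(t-T_1)(\hat\delta\aI\Xi)_{T_1 s}$, applying the shift property (Lemma~\ref{lemma_sewing_shift}) and shift-covariance of the supporting processes to the first piece, and then reading off the fixed-point equations for $(y^1,z^1)$ and $(y^2,z^2)$; your $\omega^S$-bookkeeping in the straddling case for $z$ reproduces the paper's computation line by line. The paper explicitly treats only the two non-trivial cases (the $y$-equation for $t\ge T_1$ and the $z$-equation for $s\le T_1\le t$) and declares the rest ``straightforward'', whereas you also sketch membership in $X_{\omega,T_1+T_2}$ and the algebraic constraint $(\hat\delta_2 z)_{t\tau s}=\omega^S_{t\tau}(\cdot(\delta y)_{\tau s})$, so in that sense your write-up is more complete.

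One small correction: you invoke ``$\xi\in D_{2\beta}$'' to obtain $y^1_{T_1}\in D_\beta$, but the lemma carries no such hypothesis on $\xi$. The conclusion you need is nevertheless true for arbitrary $\xi\in W$ by estimate~\eqref{estimate_y_beta} of Theorem~\ref{theorem_localsolution}, so your gluing argument (using that $y^2\in C^\beta$ and $z^2\in C^{\alpha+\beta}$ unweighted) survives with this reference replaced.
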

\begin{proof}
The statement follows by a standard computation. We only focus on certain cases, since the rest are straightforward.~For the beginning we consider $T_1 \leq t \leq T_1+T_2$. We recall that we use the notation $\Xi^{(y/z)}_{\omega}$ and $\Xi^{(y/z)}_{\theta_{\cdot}\omega}$ in order to indicate the appropriate shifts with respect to $\omega$.
\begin{align*}
S(t) \xi + \aI \Xi^{(y)}_{\omega}(y,z)_{t} 
&= S(t-T_1) S(T_1)\xi + S(t-T_1) \aI \Xi^{(y)}_{\omega}(y,z)_{T_1} + (\hat\delta\aI \Xi^{(y)}_{\omega}(y,z))_{tT_1} \\
&= S(t-T_1) \left(S(T_1) \xi + \aI \Xi^{(y)}_{\omega}(y^1,z^1)_{T_1} \right)
		+ (\hat\delta\aI \Xi^{(y)}_{\omega}(y^2_{\cdot-T_1},z_{\cdot-T_1,\cdot-T_1}))_{tT_1} \\
&= S(t-T_1) y^1_{T_1} + (\hat\delta\aI \Xi^{(y)}_{\omega}(y^2_{\cdot-T_1},z^2_{\cdot-T_1,\cdot-T_1}))_{tT_1}.
\end{align*}
Recall that 
\begin{align*}
\Xi^{(y)}_{\omega}(y^2_{\cdot-T_1},z^2_{\cdot-T_1,\cdot-T_1})_{vu} 
&= \omega^S_{vu}(G(y^2_{u-T_1})) + z^2_{v-T_1,u-T_1}(DG(y^2_{u-T_1})),
\end{align*}
which further leads to
\begin{align*}
 \widetilde{\theta}_{T_1} \Xi^{(y)}_{\omega}(y^2_{\cdot-T_1},z^2_{\cdot-T_1,\cdot-T_1}){vu}
&= \omega^S_{v+T_1,u+T_1}(G(y^2_u)) + z^2_{vu}(DG(y^2_u)) \\
&= \widetilde\theta_{T_1} \omega^S_{vu}(G(y^2_u)) + z^2_{vu}(DG(y^2_u)).
\end{align*}
Now, Lemma~\ref{lemma_sewing_shift} entails 
\begin{align*}
(\hat\delta\aI \Xi^{(y)}_{\omega}(y^2_{\cdot-T_1},z^2_{\cdot-T_1,\cdot-T_1}))_{tT_1} 
= (\hat\delta\aI \Xi^{(y)}_{\theta_{T_1} \omega}(y^2,z^2))_{t-T_1,0}.
\end{align*}
Consequently,
\begin{align*}
S(t) \xi + \aI \Xi^{(y)}_{\omega}(y,z)_{t} = S(t-T_1) y^1_{T_1} + \aI \Xi^{(y)}_{\theta_{T_1} \omega}(y^2,z^2)_{t-T_1} = y^2_{t-T_1} = y_t.
\end{align*}
Now, let $0 \leq s \leq T_1 \leq t \leq T_1+T_2$. Then we have
\begin{align*}
&(\hat\delta\aI \Xi^{(z)}_{\omega}(y,y))_{ts}(E) - \omega^S_{ts}(Ey_s) \\
= &(\hat\delta\aI \Xi^{(z)}_{\omega}(y,y))_{t T_1}(E) + S(t-T_1) (\hat\delta\aI \Xi^{(z)}_{\omega}(y,y))_{T_1 s}(E)
		 - \omega^S_{tT_1}(Ey_s) - S(t-T_1)  \omega^S_{T_1s}(Ey_s) \\
= &(\hat\delta\aI \Xi^{(z)}_{\omega}(y^2_{\cdot-T_1},y^2_{\cdot-T_1}))_{t T_1}(E) 
		+ S(t-T_1) (\hat\delta\aI \Xi^{(z)}_{\omega}(y^1,y^1))_{T_1 s}(E)
		 - \omega^S_{tT_1}(Ey^1_s) - S(t-T_1) \omega^S_{T_1s}(Ey^1_s) \\
= & S(t-T_1) z^1_{T_1 s}(E)  +\omega^S_{t T_1}(E(\delta y^1)_{T_1 s}) 
		+ (\hat\delta\aI \Xi^{(z)}_{\omega}(y^2_{\cdot-T_1},y^2_{\cdot-T_1}))_{t T_1}(E)
		- \omega^S_{t T_1}(E y^2_0),
\end{align*}
where we use in the last step that $y^2_0=y^1_{T_1}$. 
Since
\begin{align*}
\Xi^{(z)}_{\omega}(y^2_{\cdot-T_1},y^2_{\cdot-T_1})_{vu} (E)
&= b_{vu}(E,G(y^2_{u-T_1})) + a_{vu}(E,y^2_{u-T_1}) ,
\end{align*}
further entails
\begin{align*}
\widetilde{\theta}_{T_1} \Xi^{(z)}_{\omega}(y^2_{\cdot-T_1},y^2_{\cdot-T_1})_{vu} (E)
&= \widetilde\theta_{T_1} b_{vu}(E,G(y^2_{u})) + \widetilde\theta_{T_1} a_{vu}(E,y^2_{u}) \\
&= \Xi^{(z)}_{\theta_{T_1}\omega}(y^2,y^2)_{vu} (E).
\end{align*}
Hence, we infer using Lemma \ref{lemma_sewing_shift} that
\begin{align*}
&(\hat\delta\aI \Xi^{(z)}_{\omega}(y,y))_{ts}(E) - \omega^S_{ts}(Ey_s) \\
= & S(t-T_1) z^1_{T_1 s}(E)  +\omega^S_{t T_1}(E(\delta y^1)_{T_1 s}) 
		+ (\hat\delta\aI \Xi^{(z)}_{\theta_{T_1}\omega}(y^2,y^2))_{t-T_1,0}(E) - \omega^S_{t T_1}(E y^2_0) \\
= & S(t-T_1) z^1_{T_1 s}(E)  +\omega^S_{t T_1}(E(\delta y^1)_{T_1 s}) + z^2_{t-T_1,0}(E)
= z_{ts}(E).
\end{align*}
\qed 
\end{proof} \\
Regarding all the previous deliberations we can now state the main results of this section.
\begin{theorem}\label{theorem_global}
Let $\xi$ in $D_{2\beta}$. Then for any $T>0$ there exists a unique global solution, i.e.~there exists a unique fixed-point of $\aM_{T,\omega,\xi}$.
\end{theorem}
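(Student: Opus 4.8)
The plan is to build the global fixed-point by gluing together finitely many local fixed-points from Theorem~\ref{theorem_localsolution} by means of the concatenation Lemma~\ref{lemma_concatenation}, with the a-priori bound of Lemma~\ref{lemma_apriori_estimate} doing the essential work. The point is that Lemma~\ref{lemma_apriori_estimate} controls $\|y\|_{\infty,D_{2\beta},\cdot}$ on \emph{any} subinterval purely in terms of $|\xi|_{D_{2\beta}}$ and the length of that interval, so the bound does not accumulate along the patching. Hence the local existence time cannot degenerate to zero, and $[0,T]$ is covered after a number of steps fixed in advance.

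First I would fix $T>0$, set $r:=1\vee|\xi|_{D_{2\beta}}$, and let $M=M(\omega,T)\ge 1$ be the constant of Lemma~\ref{lemma_apriori_estimate}; since $M$ enters only through the rough-path norms of $\omega$ on $[0,T]$, the same $M$ is admissible for every shifted rough path $(\theta_\tau\omega,\widetilde\theta_\tau\omegaa)$, $\tau\in[0,T]$. Put $R:=rMe^{MT}\geq r$, let $C_\ast$ be the norm of the embedding $D_{2\beta}\hookrightarrow W$, and set $R_W:=C_\ast R$, so that $|x|_W\le R_W$ whenever $|x|_{D_{2\beta}}\le R$. Applying Theorem~\ref{theorem_localsolution} to the rough paths $(\theta_\tau\omega,\widetilde\theta_\tau\omegaa)$, whose norms on $[0,T-\tau]$ are dominated by those of $\omega$ on $[0,T]$, then furnishes a single $\delta=\delta(\omega,R_W)>0$ such that for every $\tau\in[0,T)$ and every $\eta\in W$ with $|\eta|_W\le R_W$ the map $\aM_{\delta\wedge(T-\tau),\theta_\tau\omega,\eta}$ has a unique fixed-point.

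Next I would construct the solution along the grid $\tau_k:=(k\delta)\wedge T$ by induction on $k$. For the base case $k=1$, Theorem~\ref{theorem_localsolution} applied to $\xi$ (legitimate since $|\xi|_W\le R_W$) provides a fixed-point of $\aM_{\tau_1,\omega,\xi}$. For the inductive step assume $(y,z)$ is a fixed-point of $\aM_{\tau_k,\omega,\xi}$; since $\xi\in D_{2\beta}$, Lemma~\ref{lemma_apriori_estimate} gives $\|y\|_{\infty,D_{2\beta},\tau_k}\le rMe^{M\tau_k}\le R$, hence $|y_{\tau_k}|_W\le R_W$, so by the choice of $\delta$ there is a fixed-point $(y^\sharp,z^\sharp)$ of $\aM_{\tau_{k+1}-\tau_k,\,\theta_{\tau_k}\omega,\,y_{\tau_k}}$, and Lemma~\ref{lemma_concatenation} glues $(y,z)$ and $(y^\sharp,z^\sharp)$ into a fixed-point of $\aM_{\tau_{k+1},\omega,\xi}$. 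After $N:=\lceil T/\delta\rceil$ steps $\tau_N=T$, so this produces a fixed-point of $\aM_{T,\omega,\xi}$; that it belongs to $X_{\omega,T}$ follows from $\Phi_T(y,z)<\infty$ (Lemma~\ref{phi:endlich}) together with \eqref{estimate_y_AT} and \eqref{estimate_z_AT}.

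For uniqueness I would run a standard connectedness argument: given two fixed-points $(y,z)$ and $(\bar y,\bar z)$ of $\aM_{T,\omega,\xi}$, set $\tau^\ast:=\sup\{\tau\in[0,T]\colon (y,z)\equiv(\bar y,\bar z)\text{ on }[0,\tau]\}$; by Remark~\ref{remark_localsolution_cut} both restrict to fixed-points of $\aM_{\widetilde T,\omega,\xi}$ for small $\widetilde T>0$, so the local uniqueness in Theorem~\ref{theorem_localsolution} gives $\tau^\ast>0$, and if $\tau^\ast<T$ then $y_{\tau^\ast}=\bar y_{\tau^\ast}$ by continuity and Lemma~\ref{lemma_solution_shift} makes $(\widetilde\theta_{\tau^\ast}y,\widetilde\theta_{\tau^\ast}z)$ and $(\widetilde\theta_{\tau^\ast}\bar y,\widetilde\theta_{\tau^\ast}\bar z)$ both fixed-points of $\aM_{T-\tau^\ast,\theta_{\tau^\ast}\omega,y_{\tau^\ast}}$, which agree near $0$ by local uniqueness, contradicting the maximality of $\tau^\ast$; hence $\tau^\ast=T$. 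I expect the only subtle point of the whole argument to be the uniformity of the constants $M$ and $\delta$ under the time-shifts $\theta_\tau$ — which is exactly what lets the induction terminate after finitely many, uniformly bounded steps — since everything genuinely hard has already been packed into the remainder estimates of Lemma~\ref{lemma_estimate} and the non-cumulative a-priori bound of Lemma~\ref{lemma_apriori_estimate}.
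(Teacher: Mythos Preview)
Your proof is correct and follows essentially the same strategy as the paper: use the a-priori bound of Lemma~\ref{lemma_apriori_estimate} to fix a uniform $D_{2\beta}$-radius $R=rMe^{MT}$, deduce from Theorem~\ref{theorem_localsolution} a uniform local existence time $\delta$ valid for every admissible initial datum and every $\theta_\tau$-shift, and then concatenate via Lemma~\ref{lemma_concatenation} finitely many times to cover $[0,T]$. You are somewhat more explicit than the paper about the embedding $D_{2\beta}\hookrightarrow W$, the uniformity of the constants under the shifts $\theta_\tau$, and the uniqueness argument, all of which the paper leaves implicit.
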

\begin{proof}
Let $r=1 \vee \left|\xi \right|_{D_{2\beta}}$. 
By Lemma~\ref{lemma_apriori_estimate} we know that every fixed-point of $\aM_{T,\omega,\xi}$ must satisfy the estimate
\begin{align*}
\left\|y \right\|_{\infty,D_{2\beta,T}} 
\leq r M e^{MT}=:\widetilde{r}.
\end{align*}
Particularly, this means that $\left|y_t \right|_{D_{2\beta}}\leq \widetilde{r}$, for all $t \leq T$. 
Applying Theorem~\ref{theorem_localsolution} with $\left|\xi \right|_{D_{2\beta}}\leq \widetilde{r}$  entails the existence of a unique local solution on a time interval $[0,T^*]$, where $T^* = T^*(\widetilde{r})$, i.e. there is a unique fixed-point $(y,z)$ of $\aM_{T^*\!,\omega,\xi}$. For simplicity, since we can choose $T^*$ arbitrary small, we set  $N:=\frac{T}{T^*} \in \IN$ for $N \geq 2$.\\ 

Note that $\left|y_{T^*}\right| \leq \widetilde{r}$.
 Hence, we can derive by using again Theorem~\ref{theorem_localsolution} the existence of a unique fixed-point of $\aM_{T^*\!,\theta_{T^*}\omega,y_{T^*}}$.
Furthermore, Lemma~\ref{lemma_concatenation} shows that we can concatenate them and obtain a fixed-point $(y,z)$ of $\aM_{2T^*\!,\omega,\xi}$. Again we have $\left|y_{2T^*}\right| \leq \widetilde{r}$.\\

Iterating this argument entails the existence of a unique fixed-point $(y,z)$ of $\aM_{T,\omega,\xi}$ for any $T>0$. 
\qed 
\end{proof}
\begin{corollary}
Let $\xi$ in $W$. Then for any $T>0$ there exists a unique fixed-point of $\aM_{T,\omega,\xi}$.
\end{corollary}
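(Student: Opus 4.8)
The idea is to remove the restriction $\xi\in D_{2\beta}$ imposed in Theorem~\ref{theorem_global} by exploiting the instantaneous smoothing of the analytic semigroup: for $\xi\in W$ the local solution enters $D_{2\beta}$ after an arbitrarily short time, after which Theorem~\ref{theorem_global} furnishes a global continuation that one glues back to the initial piece via Lemma~\ref{lemma_concatenation}.

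First I would apply Theorem~\ref{theorem_localsolution} with $r:=1\vee|\xi|$ to obtain a horizon $T^*=T^*(\omega,r)>0$ and a unique fixed-point $(y,z)\in X_{\omega,T^*}$ of $\aM_{T^*\!,\omega,\xi}$. If $T\le T^*$, the restriction of $(y,z)$ to $[0,T]$ is already a fixed-point of $\aM_{T,\omega,\xi}$ by Remark~\ref{remark_localsolution_cut}, so I may assume $T>T^*$ from now on.

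The bootstrap step: fix any $t_0\in(0,T^*)$. By Remark~\ref{remark_localsolution_cut} the restriction of $(y,z)$ to $[0,t_0]$ is a fixed-point of $\aM_{t_0,\omega,\xi}$, hence estimate~\eqref{estimate_y_beta} yields $y_{t_0}\in D_\beta$. By Lemma~\ref{lemma_solution_shift} the pair $(\widetilde\theta_{t_0}y,\widetilde\theta_{t_0}z)$ is a fixed-point of $\aM_{T^*-t_0,\theta_{t_0}\omega,y_{t_0}}$ whose initial datum now lies in $D_\beta$, so Lemma~\ref{lemma_y_2beta} applies and gives $(\widetilde\theta_{t_0}y)_s=y_{t_0+s}\in D_{2\beta}$ for every $s\in(0,T^*-t_0]$; in particular $y_{T^*}\in D_{2\beta}$. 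Since the shift of the driving rough path is again an $\alpha$-H\"older rough path (see the remark preceding Lemma~\ref{lemma_solution_shift}), Theorem~\ref{theorem_global} then provides, for the prescribed $T$, a unique fixed-point $(\yt,\zt)$ of $\aM_{T-T^*\!,\theta_{T^*}\omega,y_{T^*}}$, and Lemma~\ref{lemma_concatenation} glues $(y,z)|_{[0,T^*]}$ with $(\yt,\zt)$ into a fixed-point of $\aM_{T,\omega,\xi}$; this proves existence.

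For uniqueness I would use a standard continuation argument: given two fixed-points of $\aM_{T,\omega,\xi}$, let $\tau$ be the supremum of times up to which they coincide; local uniqueness (Theorem~\ref{theorem_localsolution}) together with Remark~\ref{remark_localsolution_cut} gives $\tau>0$, and if $\tau<T$ then, applying Lemma~\ref{lemma_solution_shift} at time $\tau$ so that both shifted pairs solve $\aM_{T-\tau,\theta_\tau\omega,y_\tau}$ and invoking local uniqueness once more, the two solutions must agree slightly beyond $\tau$, contradicting maximality; hence $\tau=T$. The only point that needs care — rather than a genuine obstacle — is the bookkeeping of the successive Wiener shifts (first by $t_0$, then by $T^*$) and checking that the concatenated pair lies in $X_{\omega,T}$: its $y$-component is only of class $C^{\beta,\beta}$, since $\xi\notin D_\beta$, but Lemma~\ref{lemma_concatenation} is formulated for arbitrary fixed-points and covers exactly this situation, so the remaining verifications are immediate.
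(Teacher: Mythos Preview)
Your proof is correct and follows essentially the same bootstrap strategy as the paper: use the local solution to upgrade the regularity of the state from $W$ to $D_\beta$ via~\eqref{estimate_y_beta}, then to $D_{2\beta}$ via Lemma~\ref{lemma_y_2beta}, at which point Theorem~\ref{theorem_global} takes over and Lemma~\ref{lemma_concatenation} glues the pieces. The only cosmetic difference is that the paper performs the two regularity upgrades on two consecutive local intervals $[0,T_1]$ and $[T_1,T_1+T_2]$ (with an intermediate concatenation), whereas you carry out both upgrades inside a single local interval $[0,T^*]$ by introducing an intermediate time $t_0$ and shifting; this spares one application of Lemma~\ref{lemma_concatenation} but is otherwise the same argument.
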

\begin{proof}
Theorem \ref{theorem_localsolution} gives us the existence of a unique fixed-point $(y,z)$  of $\aM_{T_1,\omega,\xi}$, where $T_1=T_1(\omega,\xi)$. Furthermore due to~\eqref{estimate_y_beta} we obtain $y_{T_1}\in D_{\beta}$. Then by Theorem \ref{theorem_localsolution} we know that there exists a unique-fixed point of $\aM_{T_2,\theta_{T_1}\omega,y_{T_1}}$, which according to Lemma \ref{lemma_concatenation} can be concatenated with the previous one to a fixed-point $(y,z)$ of $\aM_{T_1+T_2,\omega,\xi}$. Lemma \ref{lemma_y_2beta} entails $y_{T_1+T_2} \in D_{2\beta}$. Hence, we are in the setting of Theorem \ref{theorem_global} and obtain the existence of a global fixed-point of $\aM_{T-T_1-T_2,\theta_{T_1+T_2}\omega,y_{T_1+T_2}}$.~Again, this 
can be concatenated to a fixed-point of $\aM_{T,\omega,\xi}$ due to Lemma \ref{lemma_concatenation}. This procedure gives us the global-in-time solution.
\qed 
\end{proof}
 \section{Random dynamical systems}\label{sect:rds}
Based on the results derived in the previous section we investigate random dynamical systems for~\eqref{eq1}. There are very few works that deal with random dynamical systems for SPDEs driven by nonlinear multiplicative rough noise, see for instance~\cite{Gess}.
In the finite-dimensional setting this topic was considered in~\cite{BailleulRiedelScheutzow}.\\

We start by introducing the next fundamental concept in the theory of random dynamical systems, which describes a model of the driving noise.

\begin{definition} 
	Let $(\Omega,\mathcal{F},\mathbb{P})$ stand for a probability space and 
	$\theta:\mathbb{R}\times\Omega\rightarrow\Omega$ be a family of 
	$\mathbb{P}$-preserving transformations (i.e.,~$\theta_{t}\mathbb{P}=
	\mathbb{P}$ for $t\in\mathbb{R}$) having following properties:
	\begin{description}
		\item[(i)] the mapping $(t,\omega)\mapsto\theta_{t}\omega$ is 
		$(\mathcal{B}(\mathbb{R})\otimes\mathcal{F},\mathcal{F})$-measurable;
		\item[(ii)] $\theta_{0}=\textnormal{Id}_{\Omega}$;
		\item[(iii)] $\theta_{t+s}=\theta_{t}\circ\theta_{s}$ for all 
		$t,s,\in\mathbb{R}$.
	\end{description}
	Then the quadrupel $(\Omega,\mathcal{F},\mathbb{P},(\theta_{t})_{t\in\mathbb{R}})$ 
	is called a metric dynamical system.
\end{definition}
Motivated by this we precisely describe the random input driving~\eqref{eq1}. Therefore, our aim is introduce the (canonical) probability space associated to a Hilbert space-valued $\alpha$-H\"older rough path. We recall that $\alpha\in(\frac{1}{3},\frac{1}{2})$ was fixed at the beginning of this work. An example is constituted by a trace-class $V$-valued fractional Brownian motion with Hurst index $H\in(1/3,1/2]$. In order to construct it, we recall that a \emph{two-sided} real-valued fractional Brownian motion $\widetilde{\beta}^{H}(\cdot)$ with a Hurst index $H\in(0,1)$ is a centered Gaussian process with covariance function
\begin{align*}
	\mathbb{E} (\widetilde{\beta}^{H}(t)\widetilde{\beta}^{H}(s)) =\frac{1}{2} (|t|^{2H} + |s|^{2H} - |t-s|^{2H}), ~~\mbox{for  } s,t\in\mathbb{R}.
\end{align*}
In order to introduce a $V$-valued process, we let $Q$ stand for a positive symmetric operator of \emph{trace-class} on $V$, i.e.~$\mbox{tr}_{V}Q<\infty$. This has a discrete spectrum which will be denoted by $(\lambda_{n})_{n\in\mathbb{N}}$. It is well-known that the eigenvectors $(e_{n})_{n\in\mathbb{N}}$ build an orthonormal basis in $V$. Then a $V$-valued two-sided $Q$-fractional Brownian motion $\omega(\cdot)$ is represented by
\begin{align}\label{q}
\omega(t) = \sum\limits_{n=1}^{\infty}\sqrt{\lambda_{n}}\widetilde{\beta}^{H}_{n}(t) e_{n}, ~~ t\in\mathbb{R}  ,
\end{align}
where $(\widetilde{\beta}^{H}_{n}(\cdot))_{n\in\mathbb{N}}$ is a sequence of one-dimensional independent standard two-sided fractional Brownian motions with the same Hurst parameter $H$ and $\mbox{tr}_{V}Q=\sum\limits_{n=1}^{\infty}\lambda_{n}<\infty$. In the following sequel we further fix $H\in(\frac{1}{3},\frac{1}{2}]$. \\

Keeping~\eqref{q} in mind we are justified to introduce the canonical probability space $(C_{0}(\mathbb{R},V), \mathcal{B}(C_{0}(\mathbb{R},V)), \mathbb{P}, \theta )$. Here $C_{0}(\mathbb{R},V)$ denotes the set of all $V$-valued continuous functions which are zero in zero endowed with the compact open topology and $\mathbb{P}$ is the fractional Gau\ss{}-measure which is uniquely determined by $Q$. As already introduced in Section~\ref{preliminaries}, we take for $\theta$ the usual Wiener-shift,
namely $$\theta_{\tau}\omega_{t}=\omega_{t+\tau}-\omega_{\tau},~~\mbox{for } \omega\in C_{0}(\mathbb{R},V).$$
It is well-known that the above introduced quadruple is a metric dynamical system. For our aims we restrict it to the set $\Omega:=C^{\alpha'}_0(\mathbb{R}, V)$ of all $\alpha'$-H\"older-continuous paths on any compact interval, where $\frac{1}{3}<\alpha<\alpha'<H\leq\frac{1}{2}$. We equip this set with the trace $\sigma$-algebra $\mathcal{F}:=\Omega\cap\mathcal{B}(C_{0}(\mathbb{R},V))$ and take the restriction of $\mathbb{P}$ as well. Then $\Omega\subset C_{0}(\mathbb{R},V)$ has full measure and is $\theta$-invariant. Moreover, the new quadrupel $(\Omega,\mathcal{F},\mathbb{P},\theta)$ as introduced above forms again a metric dynamical system which we will further be restricted later on.\\

We point out the following result regarding the existence/construction of the L\'evy-area $\omegaa$ for an element $\omega\in\Omega$. We stress the fact that it is necessary to let $\omega$ be $\alpha'$-H\"older continuous for $\frac{1}{3}<\alpha<\alpha'<H\leq\frac{1}{2}$. This is required in order to lift $\omega$ to a rough path $\bm{\omega}=(\omega,\omegaa)$. To this aim we furthemore have to consider the restriction of $\omega$ on compact intervals. The precise setting is stated below.
\begin{lemma}\label{levyarea} Let $\frac{1}{3}< \alpha < \alpha' < H\leq\frac{1}{2}$ and $\omega\in \Omega$ be a $Q$-fractional Brownian motion with Hurst index H.  Then there is a $\theta$-invariant subset $\Omega' \subset \Omega$ of full measure such that for any $\omega \in \Omega'$ and for any compact interval $J \subset \IR$ there exists a L\'evy-area $\omegaa \in C^{2\alpha}(\Delta_{J},V \otimes V)$ such that  $\bm{\omega}=(\omega,\omegaa)$ defines an $\alpha$-H\"older rough path. This can further be approximated by a sequence $\bm\omega^{n}:=((\omega^{n},\omegaan))_{n\in\mathbb{N}}$ in the corresponding $d_{\alpha,J}$-metric. Here $(\omega^{n})_{n\in\mathbb{N}}$ are piecewise dyadic linear functions and 
	\begin{align*}
	\omegaan_{ts}=\int\limits_{s}^{t} (\delta \omega^{n})_{rs} \otimes d\omega^{n}_r.
	\end{align*}
\end{lemma}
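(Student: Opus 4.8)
The plan is to construct $\omegaa$ as an almost-sure limit of the canonical second-level iterated integrals $\omegaan$ of the piecewise-linear dyadic interpolations $\omega^n$, and then to check that the resulting object is $\theta$-equivariant on a full-measure $\theta$-invariant set. First I would fix a compact interval $J$ and, for the smooth approximants $\omega^n$, note that $\omegaan$ satisfies Chen's relation \eqref{chen} exactly by construction (iterated integrals always do), so the only issue is convergence in the $C^{2\alpha}(\Delta_J, V\otimes V)$-norm. Because $\omega$ is a $Q$-fractional Brownian motion with $H\in(1/3,1/2]$ and $Q$ is trace-class, the Gaussian-chaos / Kolmogorov-type estimates for the Lévy area of fBm (available for $H>1/3$; see e.g.~\cite{FritzHairer}) give, for each $p$ large, a bound $\IE |\omegaan_{ts} - \omega^{(2),m}_{ts}|^p \le C_p |t-s|^{2\alpha' p}$ with $2\alpha' > 1/3\cdot 2$ summable over dyadic levels; Kolmogorov's continuity theorem then upgrades this to almost-sure convergence of $(\omega^{n})$ together with $(\omegaan)$ in the (inhomogeneous) $d_{\alpha,J}$-metric. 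Doing this simultaneously for the countable family $J=[-N,N]$, $N\in\IN$, and intersecting the corresponding full-measure sets yields a single full-measure set $\Omega''\subset\Omega$ on which the limit $\omegaa$ exists on every compact interval; on $\Omega''$, passing to the limit in Chen's relation and in the Hölder bounds shows $\bm\omega=(\omega,\omegaa)$ is an $\alpha$-Hölder rough path (note $\alpha<\alpha'$ ensures the Hölder seminorms pass to the limit).

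Second I would address $\theta$-invariance. The Wiener shift acts on paths by $\theta_\tau\omega_t = \omega_{t+\tau}-\omega_\tau$; the dyadic piecewise-linear interpolation does \emph{not} commute with an arbitrary real shift $\theta_\tau$, so one cannot argue $\omega^{(2),n}(\theta_\tau\omega) = \widetilde\theta_\tau\,\omegaan(\omega)$ directly. Instead I would use that the Lévy area, once it exists as a genuine limit, satisfies the deterministic additivity/shift identity $\omegaa_{ts}(\theta_\tau\omega) = \omegaa_{t+\tau,s+\tau}(\omega)$ \emph{on the set where the limit exists for $\omega$}, because this identity holds for the iterated integrals of any smooth path and is stable under the $d_{\alpha}$-limit (one may, for instance, approximate $\theta_\tau\omega$ by the shifts of interpolation nodes, or simply invoke the uniqueness of the Lévy area lift of a fixed $\alpha'$-Hölder path as a limit of \emph{any} sequence of smooth approximants converging in $C^{\alpha'}$, which is the continuity-of-the-lift statement). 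Thus $\theta_\tau\Omega''$ and $\Omega''$ carry compatible Lévy areas. To get a genuinely $\theta$-invariant set I would set $\Omega' := \bigcap_{\tau\in\IQ}\theta_\tau\Omega''$ and then note that by $\IP$-preservation each $\theta_\tau\Omega''$ has full measure, hence so does $\Omega'$; finally, continuity in $\tau$ of $\tau\mapsto\theta_\tau\omega$ together with continuity of the lift lets one extend $\theta$-invariance from rational to all real $\tau$, so $\Omega'$ is $\theta$-invariant and of full measure.

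The main obstacle I expect is precisely this interplay between the Wiener shift and the dyadic interpolation: the clean statement ``$\omegaan$ is shift-equivariant'' is false for the specific approximating sequence, so the argument has to route $\theta$-equivariance through the \emph{limit} object and the continuity/uniqueness of the rough-path lift rather than through the approximants. A secondary technical point is making the chaos estimates uniform enough across dyadic scales to get summability (this is where $H>1/3$, equivalently $2\alpha'>2/3$, is used and where the trace-class assumption on $Q$ enters, to reduce the $V\otimes V$-valued estimate to a sum of one-dimensional fBm Lévy-area estimates weighted by $\lambda_n$). Everything else — Chen's relation, the Hölder bounds, the construction of $\omega^n$ as piecewise dyadic linear functions — is routine and follows the corresponding finite-dimensional constructions in \cite{FritzHairer} and the scalar/trace-class computations already referenced in \cite{GarridoLuSchmalfuss1, HesseNeamtu}.
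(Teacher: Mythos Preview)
Your proposal is correct and follows essentially the same route as the paper: construct the infinite-dimensional L\'evy area by reducing, via the trace-class eigen-expansion of $Q$, to one-dimensional fBm L\'evy-area estimates (the paper cites Coutin--Qian and \cite{GarridoLuSchmalfuss3} for these), pass to the a.s.\ limit in $C^{2\alpha}$ using Borel--Cantelli together with a GRR/Kolmogorov-type argument, and handle all compact intervals via the countable family $[-T,T]$, $T\in\IN$. The paper's own proof is in fact sketchier than yours---it simply asserts $\theta$-invariance of $\Omega'$ without discussing the non-commutation of dyadic interpolation with real shifts---so your more careful treatment of this point (routing through uniqueness/continuity of the lift and a rational-shift intersection) is a genuine improvement rather than a deviation.
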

\begin{proof}
	Let $j,k\in\mathbb{N}$ and $T\in\mathbb{N}$ be such that $J\subseteq[-T,T]$. We introduce
		\begin{align}
	\omegaa_{ts}(j,k):=\int\limits_{s}^{t} (\widetilde{\beta}^{H}_{j}(r) - \widetilde{\beta}^{H}_{j}(s))~ d \widetilde{\beta}^{H}_{k}(r),~~\mbox{for } -T\leq s \leq t \leq T.
	\end{align}
	This process exists \emph{almost surely} according to Theorem~2~in~\cite{CoutinQian}, see also Corollary~10~in~\cite{FritzHairer}.
	Regarding~\eqref{q} we can represent the inifinite-dimensional L\'evy-area $\omegaa_{ts}$ component-wise as
	\begin{align}
	\omegaa_{ts}= \sum\limits_{j,k=1}^{\infty}\sqrt{\lambda_{j}}\sqrt{\lambda_{k}}~ \omegaa_{ts}(j,k)~e_{j}\otimes e_{k}.
	\end{align}
	This is well-defined \emph{almost surely} due to the fact that $\mbox{tr}_{V}Q<\infty$.~Moreover one has that $\omegaan\to \omegaa$ in $C^{2\alpha}(\Delta_{[-T,T]}, V\otimes V)$ \emph{almost surely}. The proof of these assertions relies on a standard Borel-Cantelli argument combined with the Garsia-Rodemich-Rumsey inequality and follows the lines of Lemma~2~in~\cite{GarridoLuSchmalfuss3}. Since $J\subseteq[-T,T]$, one clearly concludes that $\bm\omega^{n}$ converges to $\bm\omega$ with respect to the $d_{\alpha,J}$-metric. This immediately yields that $\Omega'$
	has full measure and is $\theta$-invariant.
	\qed
\end{proof}\\

From now on we work with the metric dynamical system $(\Omega',\mathcal{F}', \mathbb{P}',\theta)$ corresponding to $\Omega'$ constructed in Lemma~\ref{levyarea}. As above we set $\mathcal{F}':=\Omega' \cap \mathcal{F}$ and take $\mathbb{P}'$ as the restriction of $\mathbb{P}$.\\

For the sake of completeness we indicate the following result regarding the shift-property of an $\alpha$-H\"older rough path.
\begin{lemma}\label{shift:rp}
	For an $\alpha$-H\"older rough path $(\omega,\omegaa)$ 
	and $\tau\in\mathbb{R}$, the time-shift $(\theta_{\tau}\omega,\widetilde{\theta}_{\tau}\omegaa)$ 
	\begin{align*}
	& \theta_{\tau} \omega_t  =\omega_{t+\tau} - \omega_{\tau}\\
	& \widetilde{\theta}_{\tau}\omegaa_{ts}=\omegaa_{t+\tau,s+\tau}
	\end{align*}
	is again an $\alpha$-H\"older rough path.\end{lemma}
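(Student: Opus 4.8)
The plan is to verify the two defining conditions of Definition~\ref{hrp} for the shifted pair $(\theta_\tau\omega,\widetilde\theta_\tau\omegaa)$: first that the regularity is preserved, i.e.\ $\theta_\tau\omega\in C^\alpha(J,V)$ and $\widetilde\theta_\tau\omegaa\in C^{2\alpha}(\Delta_J,V\otimes V)$ for any compact interval $J$, and second that Chen's relation~\eqref{chen} still holds. Both reduce to elementary substitutions. First I would treat the path: for $s,t\in J$ we have $(\theta_\tau\omega)_t-(\theta_\tau\omega)_s=(\omega_{t+\tau}-\omega_\tau)-(\omega_{s+\tau}-\omega_\tau)=\omega_{t+\tau}-\omega_{s+\tau}$, so that
\begin{align*}
\sup_{s\neq t\in J}\frac{|(\theta_\tau\omega)_t-(\theta_\tau\omega)_s|}{|t-s|^\alpha}
=\sup_{s\neq t\in J}\frac{|\omega_{t+\tau}-\omega_{s+\tau}|}{|(t+\tau)-(s+\tau)|^\alpha}
\leq \|\omega\|_{C^\alpha(J+\tau,V)}<\infty,
\end{align*}
since $J+\tau$ is again a compact interval and $\omega$ is $\alpha$-H\"older on every compact interval. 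The same argument applied to $\widetilde\theta_\tau\omegaa_{ts}=\omegaa_{t+\tau,s+\tau}$ gives the $2\alpha$-H\"older bound on $\Delta_J$, because $(t,s)\in\Delta_J$ iff $(t+\tau,s+\tau)\in\Delta_{J+\tau}$ and $|t-s|=|(t+\tau)-(s+\tau)|$; one also notes $\widetilde\theta_\tau\omegaa_{tt}=\omegaa_{t+\tau,t+\tau}=0$, so the diagonal condition is inherited.

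Next I would check Chen's relation. For $s\le u\le t$ in $J$ the shifted times satisfy $s+\tau\le u+\tau\le t+\tau$, so applying~\eqref{chen} for $\bm\omega$ at the points $s+\tau,u+\tau,t+\tau$ yields
\begin{align*}
\widetilde\theta_\tau\omegaa_{ts}-\widetilde\theta_\tau\omegaa_{us}-\widetilde\theta_\tau\omegaa_{tu}
&=\omegaa_{t+\tau,s+\tau}-\omegaa_{u+\tau,s+\tau}-\omegaa_{t+\tau,u+\tau}\\
&=(\omega_{u+\tau}-\omega_{s+\tau})\otimes(\omega_{t+\tau}-\omega_{u+\tau})\\
&=((\theta_\tau\omega)_u-(\theta_\tau\omega)_s)\otimes((\theta_\tau\omega)_t-(\theta_\tau\omega)_u),
\end{align*}
where the last equality uses the increment identity already recorded above (the constant $-\omega_\tau$ cancels in each difference). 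This is precisely Chen's relation for $(\theta_\tau\omega,\widetilde\theta_\tau\omegaa)$, completing the verification.

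Since all three ingredients hold, $(\theta_\tau\omega,\widetilde\theta_\tau\omegaa)$ is an $\alpha$-H\"older rough path in the sense of Definition~\ref{hrp}, which is the claim. There is no genuine obstacle here: the only point requiring a word of care is that the H\"older seminorms are defined only on compact intervals, so one must observe that shifting a compact interval produces another compact interval and that the $\tau$-shift does not affect the length of increments; once this is said, each step is a direct substitution. (A remark one could append: the same computation shows $d_{\alpha,J}(\theta_\tau\bm\omega,\theta_\tau\widetilde{\bm\omega})=d_{\alpha,J+\tau}(\bm\omega,\widetilde{\bm\omega})$, which is the continuity statement needed for the metric dynamical system framework in Section~\ref{rds}.)
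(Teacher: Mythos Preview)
Your proof is correct and follows essentially the same approach as the paper: the paper declares the regularity ``straightforward'' and then verifies Chen's relation by exactly the substitution $s\mapsto s+\tau$, $u\mapsto u+\tau$, $t\mapsto t+\tau$ together with the cancellation of $\omega_\tau$ in the increments, just as you do. Your write-up is simply more explicit about the H\"older bounds.
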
 
\begin{proof}
	The time-regularity is straightforward and one can easily verify Chen's relation~\eqref{chen}. This reads as
	\begin{align}
	\widetilde{\theta}_{\tau} \omegaa_{ts} - \widetilde{\theta}_{\tau} \omegaa_{us} 
	-\widetilde{\theta}_{\tau}\omegaa_{tu} &= \omegaa_{t+\tau,s+\tau} 
	- \omegaa_{u+\tau,s+\tau} - \omegaa_{t+\tau,u+\tau} \nonumber \\
	& = \omega_{u+\tau,s+\tau} \otimes \omega_{t+\tau,u+\tau} \label{chena},\\
	&=(\omega_{u+\tau} -\omega_{\tau} - \omega_{s+\tau} +\omega_{\tau} ) \otimes (\omega_{t+\tau} 
	- \omega_{\tau} -\omega_{u+\tau} + \omega_{\tau} ) \nonumber \\
	& =(\delta\theta_{\tau}\omega)_{us} \otimes (\delta\theta_{\tau} \omega)_{tu}. \nonumber
	\end{align}
	where in~\ref{chena} we use Chen's relation~\eqref{chen}.
	\qed
\end{proof}
\begin{definition}\label{rds} 
	A random dynamical system on $W$ over a metric dynamical 
	system $(\Omega,\mathcal{F},\mathbb{P},(\theta_{t})_{t\in\mathbb{R}})$ 
	is a mapping $$\varphi:\mathbb{R}_{+}\times\Omega\times W\to W,
	\mbox{  } (t,\omega,x)\mapsto \varphi(t,\omega,x), $$
	which is $(\mathcal{B}(\mathbb{R}_{+})\times\mathcal{F}\times
	\mathcal{B}(W),\mathcal{B}(W))$-measurable and satisfies:
	\begin{description}
		\item[(i)] $\varphi(0,\omega,\cdot{})=\textnormal{Id}_{W}$ 
		for all $\omega\in\Omega$;
		\item[(ii)]$ \varphi(t+\tau,\omega,x)=
		\varphi(t,\theta_{\tau}\omega,\varphi(\tau,\omega,x)), 
		\mbox{ for all } x\in W, ~t,\tau\in\mathbb{R}_{+},~\omega\in\Omega.$
	\end{description}
\end{definition}
If one additionally assumes that
\begin{description}
	\item[(iii)] $\varphi(t,\omega,\cdot{}):W\to W$ is 
	continuous for all $t\in\mathbb{R}_{+}$ and all $\omega\in\Omega$,
\end{description}
then $\varphi$ is called a \emph{continuous random dynamical system}.\\

The second property in Definition~\ref{rds} is referred to as the \emph{cocycle property}. One can now expect that 
the solution operator of~\eqref{eq1} generates a random dynamical 
system. Indeed, working with a pathwise interpretation of the 
stochastic integral, \emph{no exceptional 
sets} can occur. \\

We can now state the main result of this work. Recall that $\Omega'$ was constructed in Lemma~\ref{levyarea}.
\begin{theorem} The solution operator of~\eqref{eq1} generates a random dynamical system $\varphi:\mathbb{R}_{+}\times\Omega'\times W\to W$ given by
	\begin{align}
	\varphi(t,\omega,\xi):= y_{t},
	\end{align}
	where $y$ is the first component of the fixed-point operator $\mathcal{M}_{t,\omega,\xi}$.
\end{theorem}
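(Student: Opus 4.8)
The plan is to verify the three defining conditions of a random dynamical system — measurability, the identity at $t=0$, and the cocycle property — for the map $\varphi(t,\omega,\xi):=y_t$, where $(y,z)$ is the unique global fixed-point of $\mathcal{M}_{t,\omega,\xi}$ whose existence was established in Theorem~\ref{theorem_global} and the Corollary following it. The underlying metric dynamical system is $(\Omega',\mathcal{F}',\mathbb{P}',\theta)$ from Lemma~\ref{levyarea}, so every $\omega\in\Omega'$ carries a genuine $\alpha$-H\"older rough path $\bm{\omega}=(\omega,\omegaa)$, and by Lemma~\ref{shift:rp} the shift $\theta_\tau\omega$ does too; consequently all the supporting processes $\omega^S,a,b,c$ and the fixed-point equations make sense pathwise for every $\omega\in\Omega'$, which is exactly the point that rules out exceptional sets depending on $\xi$.

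First I would record that the map is well-defined: for each $\omega\in\Omega'$ and each $\xi\in W$, the Corollary after Theorem~\ref{theorem_global} gives a unique global fixed-point of $\mathcal{M}_{T,\omega,\xi}$ for every $T>0$, and these are consistent under restriction by Remark~\ref{remark_localsolution_cut}, so $y_t$ is unambiguously defined for all $t\ge 0$. Property~(i) is immediate since $y_0=S(0)\xi+\aI\Xi^{(y)}(y,z)_0=\xi$, because $\aI\Xi_0=0$ by Theorem~\ref{lemma_sewing}. For property~(iii), continuity of $\xi\mapsto y_t$ follows from the contraction estimates underlying Theorem~\ref{theorem_localsolution} together with the a-priori bound of Lemma~\ref{lemma_apriori_estimate}, which controls $\|y\|_{\infty,D_{2\beta},T}$ on any fixed interval in terms of $|\xi|_{D_{2\beta}}$; a standard fixed-point-depends-continuously-on-data argument, concatenated over finitely many subintervals, then gives continuity on all of $W$ (first on $D_{2\beta}$, then extended via the regularization estimates~\eqref{estimate_y_beta} and Lemma~\ref{lemma_y_2beta} for general $\xi\in W$).

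The cocycle property~(ii), $\varphi(t+\tau,\omega,\xi)=\varphi(t,\theta_\tau\omega,\varphi(\tau,\omega,\xi))$, is the heart of the matter and follows from Lemma~\ref{lemma_solution_shift} combined with uniqueness. Concretely: let $(y,z)$ be the global fixed-point of $\mathcal{M}_{t+\tau,\omega,\xi}$. By Lemma~\ref{lemma_solution_shift}, $(\widetilde\theta_\tau y,\widetilde\theta_\tau z)$ restricted to $[0,t]$ is a fixed-point of $\mathcal{M}_{t,\theta_\tau\omega,y_\tau}$. But $y_\tau=\varphi(\tau,\omega,\xi)$ by definition, and the global fixed-point of $\mathcal{M}_{t,\theta_\tau\omega,y_\tau}$ is unique, so its first component evaluated at time $t$ equals $\varphi(t,\theta_\tau\omega,\varphi(\tau,\omega,\xi))$. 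On the other hand $(\widetilde\theta_\tau y)_t=y_{t+\tau}=\varphi(t+\tau,\omega,\xi)$, which gives the identity. The only subtlety is bookkeeping: one must ensure $\theta_\tau\omega\in\Omega'$ (guaranteed by the $\theta$-invariance in Lemma~\ref{levyarea}) and that the $\widetilde\theta_\tau$-shifted supporting processes coincide with the ones built from $\theta_\tau\omega$ (the Remark preceding Lemma~\ref{lemma_solution_shift}).

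The step I expect to be the main obstacle — or at least the most delicate to write carefully — is \textbf{measurability} of $(t,\omega,\xi)\mapsto\varphi(t,\omega,\xi)$ in the product $\sigma$-algebra $\mathcal{B}(\IR_+)\otimes\mathcal{F}'\otimes\mathcal{B}(W)$. Joint continuity in $(t,\xi)$ for fixed $\omega$ reduces this to measurability in $\omega$ for fixed $(t,\xi)$. The plan is to exploit the approximation $\bm\omega^n\to\bm\omega$ in the $d_{\alpha,J}$-metric from Lemma~\ref{levyarea}, together with the continuous dependence of the supporting processes on the rough path (Lemma~\ref{lemma:cont:dependence1}) and the stability of the fixed-point map under the rough-path metric, to realize $y_t=\varphi(t,\omega,\xi)$ as a pointwise (a.s.) limit of expressions built measurably from the finitely-many increments of $\omega$; since measurability is preserved under pointwise limits, this yields the claim. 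Alternatively one can argue via continuity of the solution map in $\bm\omega$ with respect to $d_{\alpha,J}$ and the measurability of $\omega\mapsto\bm\omega$, but in either case the bookkeeping across the concatenation procedure (where $T^*$ itself depends on $\omega$) requires care.
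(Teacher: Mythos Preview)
Your proposal is correct and follows essentially the same route as the paper: well-definedness from the global existence results, the cocycle identity from Lemma~\ref{lemma_solution_shift} plus uniqueness, and measurability via the smooth dyadic approximations $\bm\omega^n\to\bm\omega$ of Lemma~\ref{levyarea} combined with the continuous dependence in Lemma~\ref{lemma:cont:dependence1}, so that $y_t=\lim_n y^n_t$ is a pointwise limit of measurable maps. The paper is terser on several points you spell out (it simply declares continuity in $(t,\xi)$ ``clear'' and invokes a Castaing--Valadier lemma to pass from measurability in $(\omega,\xi)$ for fixed $t$ to joint measurability), and it does not explicitly address the $\omega$-dependence of $T^*$ in the concatenation that you flag; but the strategy is the same.
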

\begin{proof}
Due to Theorem~\ref{theorem_global} we know that we can define the solution $(y,z)$ of~\eqref{eq1} on any time-interval $[0,T]$ for $T>0$. The cocycle property was proved in Lemma~\ref{lemma_solution_shift}. The continuity of $\varphi$ with respect to time and initial condition is clear, we only have to show the measurablity. Therefore we consider a sequence of solutions $((y^{n}, z^{n}))_{n\in\mathbb{N}}$ corresponding to the smooth approximations $((\omega^{n},\omegaan))_{n\in\mathbb{N}}$, recall Lemma~\ref{levyarea}. Note that the mapping $\omega \mapsto (\omega^n,\omegaan)$ is measurable. Due to the fact that $\omega^n$ is smooth $y^{n}$ is a classical solution of~\eqref{eq1}. Hence the mapping
\begin{align*}
[0,T]\times\Omega'\times W\ni (t,\omega,\xi)\mapsto y^{n}_{t}\in W
\end{align*}
is $(\mathcal{B}([0,T])\otimes\mathcal{F}'\otimes\mathcal{B}(W), \mathcal{B}(W))$-measurable.
Regarding Lemma~\ref{lemma:cont:dependence1} one can immediately infer that the solution $(y,z)$ continuously depends on $(\omega^{n},\omegaan)$. This leads to 
\begin{align}
\lim\limits_{n\to\infty} y^{n}_{t} = y_{t},
\end{align}
which gives us the measurability of $y_{t}$ with respect to $\mathcal{F}'\otimes \mathcal{B}(W)$. Since $y$ is continuous with respect to $t$, we obtain by Lemma~3~in~\cite{CastaingValadier} the jointly measurability, i.e.~the $(\mathcal{B}([0,T])\otimes\mathcal{F}'\otimes\mathcal{B}(W), \mathcal{B}(W))$ meaurability of the mapping
\begin{align}\label{meas}
[0,T]\times\Omega'\times W\ni (t,\omega,\xi)\mapsto y_{t}\in W.
\end{align}
Since~\eqref{meas} holds true for any $T>0$, one obviously concludes that
$\varphi$ is $(\mathcal{B}(\mathbb{R}_{+})\otimes\mathcal{F}'\otimes\mathcal{B}(W), \mathcal{B}(W))$-measurable.
	\qed
	\end{proof}


\end{document}